\newtheorem{thm}{Theorem}
\newtheorem{lemma}{Lemma}
\newtheorem{remark}{Remark}
\newtheorem{prop}{Proposition}
\def\R{\mathbb{R}}
\def\Z{\mathbb{Z}}
\def\P{{\mathbb P}}     
\def\E{{\mathbb E}}
\def\address#1{\gdef\@address{#1}}
\def\affilnum#1{${}^{#1}$}
\def\affil#1{${}^{#1}$}
\address{%
\addr{\affilnum{1}}{Department of Mathematics, Indiana University, Bloomington, IN 47405, USA}
\addr{}{All authors contributed equality to this work.}
}
\title{Constrained Langevin approximation for the  Togashi-Kaneko model of autocatalytic reactions}
\author{
Wai-Tong (Louis) Fan{\affil{*}},
  \and 
  Yifan (Johnny) Yang{{\affil{*}}},
\and
Chaojie Yuan\thanks{Department of Mathematics, Indiana University, Bloomington, IN 47405, USA}
}
\begin{document}

\maketitle

\begin{abstract}
The Togashi Kaneko model (TK model) introduced in \cite{togashi2001transitions} is a  simple stochastic reaction network that displays  discreteness-induced transitions between meta-stable patterns. Here
we study a constrained Langevin approximation (CLA) of this model. The CLA, obtained from  \cite{anderson2019constrained, leite2019constrained}, is an obliquely reflected diffusion process on the positive orthant and hence it respects the constrain that chemical concentrations are never negative. We show that
the CLA is a Feller process, is positive Harris recurrent, and converges exponentially fast to the unique stationary distribution. We also characterize the stationary distribution and show that it has finite moments. In addition, we simulate both the TK model and its CLA in various dimensions. For example, we describe how the TK model switches between meta-stable patterns in dimension 6. Our simulations
suggest that, under the classical scaling, the CLA is a good approximation to the TK model in terms of both the stationary distribution and the transition times between patterns. 
\end{abstract}

\section{Introduction}


In 2001, Togashi and Kaneko \cite{togashi2001transitions} introduced a simple model of autocatalytic reactions 
that displays a peculiar ``switching behavior" in some regions of the parameter space. The system state switches between patterns where a few species are abundant and the remaining species are almost absent, demonstrating  multi-stability at those patterns; see Figures \eqref{fig:traj_2D}-\eqref{fig:traj_6D} for some sample trajectories.
Paraphrasing \cite{bibbona2020stationary}, it is believed that ``the switching is triggered by a single molecule of a previously extinct species that drives the system to a different pattern through a sequence of quick reactions".
The emergence of such multi-stability induced by the small number effect, called \textbf{discreteness-induced transitions (DITs)} in \cite{togashi2001transitions},  has been observed in many complicated models in physics, biology and other scientific fields. For instance, it is reported in catalytic chemical reactions \cite{samoilov2005stochastic,awazu2007discreteness,kobayashi2011connection, biancalani2012noise}, reaction-diffusion systems \cite{togashi2004molecular,butler2011fluctuation}, gene regulatory networks \cite{to2010noise,ma2012small}, cancer tumor evolution \cite{sardanyes2018noisea}, virus replication \cite{sardanyes2018noiseb}, ecology \cite{biancalani2014noise}.

The widespread nature of DIT attracted many theoretical studies on the  model in \cite{togashi2001transitions} and its variants. These studies include analysis for
the switching time \cite{biancalani2014noise,houchmandzadeh2015exact,saito2015theoretical},  stationary distributions \cite{hoessly2019stationary,bibbona2020stationary},  separation of time scale \cite{biancalani2012noise,mcsweeney2014stochastically} and multimodality \cite{plesa2019noise,ali2019multi}. 
In \cite{houchmandzadeh2015exact,saito2015theoretical}, the authors analysed a mass-conserved reaction network, namely the autocatalytic reactions \eqref{auto} below with $d\in\{2,3\}$ species, together with mutations between  species
\begin{align}
A_j \overset{\epsilon}{\underset{}{\rightarrow}} & \;A_i, \qquad \quad  1\leq i\neq j\leq d. \label{muta}
\end{align}
The total number $N$ of molecules among all  species remains constant in time for this model, make it more amenable to analysis. For $d=2$,
by \cite[eqn.(7)]{houchmandzadeh2015exact}, the mean time to move from one boundary state to the other is approximately 
\begin{equation}\label{2D_time}
    \frac{1}{\epsilon} + \frac{2}{\kappa}\frac{N-1}{N}, \qquad \text{as}\quad \frac{\epsilon}{\kappa} \to 0.
\end{equation}
In \cite{saito2015theoretical}, a similar model with $d=3$ species (and more reactions) is studied, where  a noise-induced reversal of chemical current was observed as the total number of molecules decreases.

For the model in \cite{togashi2001transitions},
henceforth called  the \textbf{TK model}, mass is no longer conserved. 
The reaction network of the TK model consists of  a cycle of autocatalytic reactions, together with infow and outflow reactions.
When there are $d$ species $\{A_i\}_{i=1}^d$, the reaction network is
\begin{align}
A_i+A_{i+1} \overset{\kappa}{\rightarrow} & \;2A_{i+1},\qquad i=1,2,\cdots d, \quad \text{where }A_{d+1}=A_1,  \label{auto}\\
\emptyset \overset{\lambda}{\underset{\delta}{\rightleftarrows}} & \;A_i, \qquad \quad i=1,2,\cdots d  \label{inout}
\end{align}

Suppose  $X^i_t$ represents the number of molecules for $A_i$ at time $t$. It is standard to assume that the vector $X_t=(X^i_t)_{i=1}^d\in \Z_+^d$ evolves according to
a \textbf{continuous-time Markov chain (CTMC)} over time, with transition rates specified by  mass-action kinetics. More precisely, we let $X^{d+1}=X^1$ by convention, and construct this stochastic process $X=(X_t)_{t\in\R_+}$ as the solution to the stochastic equation
\begin{align}
X_t=X_0
+ & \sum_{i=1}^d (e_{i+1}-e_i)\,\mathcal{N}^{\kappa}_{i}\left(\kappa\int_0^tX^i_s X^{i+1}_s ds\right)
\label{auto_CTMC} \\
+& \sum_{i=1}^d e_i \,\mathcal{N}^{\lambda}_{i}(\lambda t)
-  \sum_{i=1}^d e_i \,\mathcal{N}^{\delta}_{i}\left(\delta\int_0^t X^i_s ds\right)
,\quad t\in \R_+,\label{inout_CTMC}
\end{align}
where $\{e_i\}_{i=1}^d$ are the standard basis unit vector of $\R^d$, and $\{\mathcal{N}^{\kappa}_i,\mathcal{N}^{\lambda}_i,\mathcal{N}^{\delta}_i\}_{i=1}^d$ are independent Poisson processes with unit rate; see the monograph \cite{anderson2011continuous} for basic properties of this equation.
Note that the sum of all coordinates $\sum_{i=1}^dX^i$ is equal in distribution to 
the process $S$ solving the following equation that no longer depend on  $\kappa$:
\[
S_t=S_0 +\mathcal{N}_1(d\lambda t) - \mathcal{N}_2\left(\delta \int_0^t S_s \,ds\right),\quad t\in \R_+,
\]
where $\mathcal{N}_1$ and $\mathcal{N}_2$ are independent Poisson processes with unit rate. Therefore, 
the total mass $\sum_{i=1}^dX^i_t$ is an immigration-death process that converges, as $t\to\infty$, to the Poisson distribution with mean $\frac{d\lambda}{\delta}$ exponentially fast with rate $\delta$ (see \cite[Chapter 9]{chen2004markov}).

In \cite[Theorem 4.1]{bibbona2020stationary} it was shown that the CTMC  described by \eqref{auto_CTMC}-\eqref{inout_CTMC}  is positive recurrent and converges exponentially fast to its stationary distribution when $\delta>0$ and $\lambda>0$. For the special case when $\delta=\frac{d\,\kappa}{d-1}$,  an explicit form of the stationary distribution is known  in  \cite[Theorem 4.3]{bibbona2020stationary}. Beyond these, not much is proven about \eqref{auto_CTMC}-\eqref{inout_CTMC} in general dimensions. Also, asymptotic formula like \eqref{2D_time} is not known for \eqref{auto_CTMC}-\eqref{inout_CTMC}, and
it is not clear how to directly compare the model \eqref{auto}-\eqref{inout} with the model \eqref{auto}-\eqref{muta},  because the reaction vector at the boundary of $\Z_+^d$ are different for the two models. 


A standard  model reduction technique is to look at the mean field approximation or the diffusion (Langevin) approximation to \eqref{auto_CTMC}-\eqref{inout_CTMC}. However, none of them is a good predictor: ordinary differential equations does not capture DIT in \eqref{auto_CTMC}-\eqref{inout_CTMC} because DIT for this case is due to successive extinction and revival of species, and mean field approximation breaks down when  the abundance of some species are not high. 
Central limit theorems and diffusion approximations \cite{anderson2011continuous,kang2014central} can capture the fluctuation around the deterministic ODE for  stochastic chemical reaction networks, but they can have negative coordinates, which is unrealistic and posses a technical issue: the diffusion process may not remain well-defined when a coordinate becomes negative.

To address these issues,
an obliquely reflected diffusion called \textbf{constrained Langevin approximation (CLA)}
was proposed in \cite{anderson2019constrained, leite2019constrained} as a better diffusion approximation to  CTMC that arise from chemical reaction networks. 
This  obliquely reflected diffusion process has state space in the positive orthant $\R_+^d$ (where $d$ is the number of species) and thus respect the constraint that chemical concentrations are never negative. Intuitively, a CLA behaves like a diffusion process inside the strictly positive orthant $\R^d_{>0}$ and reflects instantaneously at a boundary face in the direction specified by a vector field. Special care need to be taken for  reflection at the intersection of two or more faces. It was demonstrated in \cite{anderson2019constrained}
through numerical studies that, in addition to having the correct support, the stationary distribution for CLA can capture the behavior of the CTMC more accurately than the usual diffusion approximation.

Existing analysis of the TK model and its variants are mostly restricted to models with a small number of species, and they do not cover the analysis of the CLA of the corresponding CTMC.
In this paper, we analyze  the CLA for the TK model
\eqref{auto_CTMC}-\eqref{inout_CTMC} in general dimensions, and we perform a  simulation study for both the CTMC \eqref{auto_CTMC}-\eqref{inout_CTMC} and the CLA. 

\noindent
{\bf Organization of this paper. }
In Section \ref{S:CLA}, we show that the CLA possesses the Feller property, is positive Harris recurrent and converges exponentially fast to the unique stationary distribution $\pi$. We also show that $\pi$ has finite moments and we characterize it in terms of an elliptic partial differential equation. The proofs of these results, heavily based on the  Foster Lyapunov function approach, are presented in Section \ref{S:Proof}. Finally, Section \ref{S:simulation} contains our simulation study for the TK model and its CLA in dimensions $d=2,3$ and higher. 
We demonstrate that, at least in dimensions $d=2$ and 3, the CLA can capture both the stationary distribution and the expected transition time of the TK model when $V$ is large enough.

\medskip




\section{Analytical results for the constrained Langevin approximation}\label{S:CLA}

In this section, we describe and analyze a constrained Langevin approximation (CLA) to the TK model \eqref{auto_CTMC}-\eqref{inout_CTMC}. Precisely, the CLA is the strong solution to \eqref{Reflected_General}. In the 3 subsections below, we first establish wellposedness of equation  \eqref{Reflected_General} and the Feller property of the CLA. We then show that the CLA is positive Harris recurrent and  exponentially ergodic. Finally we characterize the stationary distribution $\pi$ and show that it has finite moments. The proofs of these results are in subsection \ref{SS:Proofs}.


\medskip

Under the classical scaling, the initial  molecule counts are proportional to a scaling parameter $V$, and 
the rate constants are of order $\kappa=O(V^{-1})$, $\lambda=O(V)$ and $\delta=O(1)$ as $V\to\infty$. 
So we let
\begin{equation}\label{classicalScaling}
\kappa=\frac{\kappa'}{V},\quad\delta=\delta'\quad \text{and}\quad \lambda=\lambda'V,
\end{equation}
where $\kappa'$, $\lambda'$ and $\delta'$ are  constants that will emerge in the mean-field approximation as $V\to\infty$.
In chemical reactions, $V$ denotes the Avogadro’s number times the volume of the vessel in which all the reactions take place.


Following the general method in \cite{anderson2019constrained, leite2019constrained},  a CLA for the TK model \eqref{auto}-\eqref{inout} is described by the stochastic differential equation with reflection (SDER)
\begin{equation}\label{Reflected_General}
d Z^{(V)}_t = b(Z^{(V)}_t) \,dt + \frac{1}{\sqrt{V}} {\sigma}(Z^{(V)}_t)\,d{W}_t + \frac{1}{\sqrt{V}}\,{\gamma}(Z^{(V)}_t)\,dL_t,
\end{equation}
where $W$ is a $d$-dimensional Brownian motion, $b:\R_+^d\to \R_+^d$ and $\gamma:\partial\R_+^d\to \R_+^d$ are functions given by
\begin{equation}\label{CLA_coe1}
    b(x) = \sum_{k = 1}^d e_k\Big(\kappa'(x_{k-1} - x_{k+1})x_k + \lambda' - \delta'x_k\Big)\quad \text{ and }\quad\gamma(x) = \frac{b(x)}{|b(x)|},
\end{equation}
where  $\{e_k\}_{k = 1}^n$ is the standard basis in $\R^d$, $x = (x_k)_{k = 1}^d$,
and $|\cdot|$ is the usual Euclidean norm.
The function $\sigma$  is the $d\times d$-matrix-valued function on $\R_+^d$ given by $\sigma(x) = \sqrt{\Gamma(x)}$ where, by \cite[eqn (29)]{anderson2019constrained},
\begin{equation}\label{CLA_coe2}
    \Gamma (x) 
    =\sum_{k = 1}^d e_{kk}\left(\kappa'(x_{k-1}+x_{k+1})x_{k} + \lambda' + \delta'x_k\right) -\sum_{k = 1}^d \kappa'x_kx_{k+1}(e_{k,k+1}+e_{k+1,k}), 
\end{equation}
where  $e_{i,j}\in \R^{d\times d}$ is the matrix whose $(i,j)$-th entry is one and all other entries are zero. Note that the matrix $\Gamma(x)$ is symmetric and strictly positive definite (or uniformly elliptic) for all $x \in \R_+^d$, see subsection \ref{SS:coe} for detail.

\begin{remark}\rm
The square root $\sigma = \sqrt{\Gamma}$ in  \eqref{Reflected_General}  is implicit and need to be calculated in practice. Another, more explicit equation that gives the same process $Z$ in distribution is to use a higher dimensional Brownian motion. Namely, for $d\ge 3$, let $\widetilde{W}$ be a  $2d$-dimensional Brownian motion, and replace ${\sigma}(Z^{(V)}_t)\,d{W}_t$ in \eqref{Reflected_General} by ${\widetilde{\sigma}}(Z_t)\,d\widetilde{W}_t$, where
$\widetilde{\sigma}$ is an explicit $d\times 2d$-matrix-valued function. For example,
for $d=3$, $\widetilde{W}$ is a 6-dimensional Brownian motion and
\[
        \widetilde{\sigma}(x)= 
        \begin{pmatrix}
        -\sqrt{\kappa' x_1 x_2} & 0 &
        \sqrt{\kappa' x_3 x_1} &  \sqrt{\lambda' +\delta x_1} & 0 &
       0
        \\
        \sqrt{\kappa' x_1 x_2}& -\sqrt{\kappa' x_2 x_3}& 0 &
        0& \sqrt{\lambda' +\delta x_2}& 0 \\
        0& \sqrt{\kappa' x_2 x_3} &- \sqrt{\kappa' x_3 x_1} &
        0& 0 &\sqrt{\lambda' +\delta x_3}
        \end{pmatrix}.
\]
For $d=2$, $\widetilde{W}$ is a 3-dimensional Brownian motion and
\[
\widetilde{\sigma}(x)= \,
\begin{pmatrix}
\sqrt{2\kappa' x_1 x_2} & 
\sqrt{\lambda' +\delta x_1} & 0
\\
-\sqrt{2\kappa' x_1 x_2}& 0& \sqrt{\lambda' +\delta x_2}
\end{pmatrix}.
\]
The matrix $\widetilde{\sigma}$ can be obtained from \eqref{auto_CTMC}-\eqref{inout_CTMC} as in \cite[Section 3]{anderson2019constrained},  under the classical scaling \eqref{classicalScaling}.
\end{remark}

\subsection{Path-wise existence and uniqueness of the CLA}

Path-wise existence of obliquely reflected diffusion can fail \cite{harrison1985stationary}. However, for \eqref{Reflected_General}, path-wise existence of solution holds since the reflection angles are nice enough ensured by \cite[Theorem 6.1]{leite2019constrained} or the argument in \cite[Theorem 5.1]{dupuis1993sdes}. 
We cannot find any existing result that can tell us 
whether the solution to \eqref{Reflected_General} is a Feller process or not. So we give a proof of the Feller property.

\begin{thm}[Path-wise solution and Feller property]\label{T:Wellposed}
For each $V\in (0,\infty)$ and initial condition $z_0\in \R_+^d$, there exists a unique path-wise solution to equation \eqref{Reflected_General}. The solutions starting from different points in  $\R_+^d$ form a family of Feller continuous strong Markov processes in $\R_+^d$.
\end{thm}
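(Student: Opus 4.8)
The plan is to establish, in order: (i) path-wise existence and uniqueness of the solution of \eqref{Reflected_General}, which reduces to verifying that the reflection field is admissible and then invoking the cited results; (ii) the strong Markov property, which follows from path-wise uniqueness; (iii) non-explosion together with moment bounds that are uniform for the initial point in compact sets, via a Foster--Lyapunov argument; and (iv) Feller continuity of the transition semigroup $P_tf(z)=\E_z[f(Z_t)]$, which is the substantive step. Throughout, write $Z=Z^{(V)}$ and $\mathbf 1=(1,\dots,1)\in\R^d$.

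For (i), note that $\R_+^d$ is a convex polyhedron with faces $F_i=\{x\in\R_+^d:x_i=0\}$ and inward unit normals $e_i$, and that on $F_i$ one has $b_i(x)=\lambda'$ and $|b(x)|\ge\lambda'>0$; hence $\gamma=b/|b|$ is smooth on $\partial\R_+^d$ with $\gamma(x)\cdot e_i=\lambda'/|b(x)|>0$, so the reflection is never tangential and always points strictly into the orthant. After checking the corresponding condition at the edges and lower-dimensional faces where several $F_i$ intersect, the pair $(\R_+^d,\gamma)$ satisfies the hypotheses of \cite[Theorem 6.1]{leite2019constrained} (equivalently those of \cite[Theorem 5.1]{dupuis1993sdes}); since $b$ is polynomial and $\sigma=\sqrt{\Gamma}$ is smooth on $\R_+^d$ (recall $\Gamma$ has polynomial entries and is everywhere positive definite, so its principal square root is smooth there), both coefficients are locally Lipschitz, and those theorems yield, for each $V$ and each $z_0\in\R_+^d$, a unique path-wise solution up to a possible explosion time. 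Path-wise uniqueness --- applied also to the equation run from an arbitrary initial time and state --- then gives, by the usual Yamada--Watanabe/Ikeda--Watanabe argument in its version for reflected equations, that the solutions form a strong Markov family, so it only remains to show that $P_tf\in C_b(\R_+^d)$ for every $f\in C_b(\R_+^d)$.

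For (iii), the idea is to control the total mass $S_t:=\mathbf 1\cdot Z_t$. Summing the coordinates of $b$, the autocatalytic terms telescope, $\sum_k\kappa'(x_{k-1}-x_{k+1})x_k=0$, so $\mathbf 1\cdot b(x)=d\lambda'-\delta'S$; similarly $\mathbf 1^{\top}\Gamma(x)\mathbf 1=d\lambda'+\delta'S$, the off-diagonal $-\kappa'x_kx_{k+1}$ entries of $\Gamma$ cancelling the quadratic part of the diagonal, while $|\mathbf 1\cdot\gamma(x)|\le\sqrt d$ because $|\gamma|\equiv1$. Hence $S$ obeys a scalar reflected SDE with affine, mean-reverting drift $d\lambda'-\delta'S$, infinitesimal variance $\tfrac1V(d\lambda'+\delta'S)$, and a regulator increment of absolute value at most $\tfrac{\sqrt d}{\sqrt V}\,dL_t$; since each coordinate of $Z_t$ lies in $[0,S_t]$, this rules out explosion and, through a Lyapunov function such as $W(x)=(1+\mathbf 1\cdot x)^p$ (or $e^{\theta\mathbf 1\cdot x}$) together with a bound of the form $\E_z[L_t]\le C(1+|z|)t$, yields $\sup_{|z|\le K}\E_z\big[\sup_{s\le t}|Z_s|^p\big]<\infty$ for all $p,t,K$; in particular $\sup_{|z|\le K}\P_z(\tau_R\le t)\le C(K,t)\,R^{-p}$, where $\tau_R=\inf\{s:|Z_s|\ge R\}$.

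For (iv), I would fix $t>0$ and a sequence $z_n\to z$, realize $Z^{(n)}$ (started at $z_n$) and $Z$ (started at $z$) on a single probability space driven by the same Brownian motion, and put $\tau=\tau^{(n)}_R=\inf\{s:|Z^{(n)}_s|\vee|Z_s|\ge R\}$. On the bounded set $\R_+^d\cap\overline{B_R}$ the coefficients $b,\sigma$ and the reflection field $\gamma$ are Lipschitz and $\gamma$ stays admissible, so the local stability estimate for reflected SDEs under these hypotheses --- the estimate underlying the uniqueness proofs of \cite{dupuis1993sdes} and \cite{leite2019constrained}, obtained from BDG and Gronwall once the relevant Skorokhod map is shown Lipschitz on compacts --- gives
\[
\E\Big[\,\sup_{s\le t\wedge\tau}|Z^{(n)}_s-Z_s|^2\,\Big]\ \le\ C_R\,e^{C_Rt}\,|z_n-z|^2 .
\]
Splitting $\E[\,|f(Z^{(n)}_t)-f(Z_t)|\,]$ over $\{\tau\le t\}$ and its complement, controlling the first piece by $2\|f\|_\infty\sup_n\P(\tau\le t)\le 2\|f\|_\infty C(K,t)R^{-p}$ from (iii) and the second by the modulus of continuity of $f$ on $\overline{B_R}$ applied to the displayed bound, and letting $n\to\infty$ and then $R\to\infty$, one obtains $\E_{z_n}[f(Z_t)]\to\E_z[f(Z_t)]$, i.e.\ $P_tf\in C_b(\R_+^d)$; continuity of $t\mapsto P_tf(z)$ is then immediate from sample-path continuity and dominated convergence. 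The main obstacle is exactly this stability estimate: the oblique, \emph{state-dependent} reflection must be handled uniformly on compacts and, above all, at the corners of $\R_+^d$, where $L$ could a priori charge the lower-dimensional faces in an uncontrolled way --- this is where one genuinely needs that $\gamma$ points strictly inward across every $F_i$ and satisfies the Dupuis--Ishii corner condition. If a stability estimate in the cited sources is not stated in a directly usable form, the same conclusion can be reached by the alternative route of proving tightness of $\{Z^{(V),z_n}\}_n$ in $C([0,t];\R_+^d)$ (from the moment bounds of (iii) and the increment bound on $L$) and identifying every subsequential weak limit with the unique solution of the constrained submartingale problem with initial law $\delta_z$, the required well-posedness of that problem being equivalent to the path-wise existence and uniqueness already in hand.
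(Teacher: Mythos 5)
Your overall route matches the paper's: well-posedness is obtained by verifying the hypotheses of \cite[Theorem 6.1]{leite2019constrained} (equivalently the Dupuis--Ishii framework) plus a Yamada--Watanabe step, and the Feller property is proved exactly as in the paper --- couple $Z^{z_n}$ and $Z^{z}$ through the same Brownian motion, localize with exit times from a large ball, invoke the stability/Gronwall estimate underlying the uniqueness proofs of \cite{dupuis1993sdes,leite2019constrained} for the stopped processes, and control the escape probability by moment bounds uniform over initial points in compacts. That part is fine.

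The genuine gap is in your step (iii), the moment/non-explosion bounds that feed the localization. You treat the boundary term by writing $|\mathbf 1\cdot\gamma|\le\sqrt d$ and then asserting an estimate $\E_z[L_t]\le C(1+|z|)t$ for the boundary local time, which you never prove. This bound is not routine here: on the face $\{x_i=0\}$ the inward component of the reflection is $\gamma_i(x)=\lambda'/|b(x)|$, which decays like $|x|^{-2}$, so the reflection becomes asymptotically tangential far out on the boundary and the standard device of finding a test function $h$ with $\nabla h\cdot\gamma$ bounded below by a positive constant on all of $\partial\R_+^d$ fails; trying instead to extract $\E[L_t]$ from It\^o's formula requires exactly the moment bounds you are in the middle of proving, which is circular. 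Moreover your reduction of $S_t=\mathbf 1\cdot Z_t$ to ``a scalar reflected SDE'' is loose, since $L$ is not the local time of $S$ at $0$ and can a priori push $S$ upward at an uncontrolled rate. The paper avoids all of this with one observation you did not make: since $\mathbf 1\cdot b(x)=d\lambda'-\delta'|x|_1$, the choice $U(x)=\bigl(|x|_1-\tfrac{d\lambda'}{\delta'}\bigr)^2$ satisfies $\nabla U^p(x)\cdot\gamma(x)\le 0$ on the entire boundary (Lemma \ref{lem:local_time_out}), so the local-time term in It\^o's formula is non-positive and can simply be discarded; BDG plus a Bihari--Gronwall inequality then give $\E_x\bigl[\sup_{s\le t}U^{2p}(Z_s)\bigr]$ bounded uniformly for $x$ in compacts, with no estimate on $L$ ever needed. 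Your argument becomes correct once you replace the asserted local-time bound by this sign property (or, equivalently, use any nondecreasing function of $|x|_1$ that is constant on $[0,d\lambda'/\delta']$, for which the reflection term again has the favorable sign).
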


Let $Z=Z^{(V)}$ be the  solution to \eqref{Reflected_General} from now on, and omit the super script $V$ when there is no ambiguity.
A solution $Z$ to \eqref{Reflected_General} is a good approximation to $\frac{X}{V}$ on any compact time interval, as $V\to\infty$, when initially $Z_0=\frac{X_0}{V}$. More precisely, the CLA was obtained in  \cite{leite2019constrained} as the scaling limit of a sequence of jump-diffusion processes that are believed to be good approximations to the stochastic reaction network. These jump -diffusion processes behave like the standard Langevin approximation in the interior of the positive orthant and like a rescaled version of the CTMC on the boundary of the orthant. Though a rigorous connection between the CLA and the CTMC is still missing, simulation results in \cite{anderson2019constrained, leite2019constrained} demonstrated that the CLA is a remarkably good approximation to the CTMC. 

Our proofs depends heavily on the Foster-Lyapunov function approach \cite{meyn1993stability}. We consider the function
$U:\R^d_+ \to \R_+$ defined by
\begin{align}\label{lyapunov_function}
    U(x) = \left(|x|_1 - \frac{d\lambda'}{\delta'}\right)^2,
\end{align}
where $|x|_1 = \sum_{i = 1}^d |x_i|$. Importantly, this function is compatible with the reflection field on the boundary (Lemma \ref{lem:local_time_out}) and leads  to Lyapunov inequalities (Lemma \ref{lm:lyapunov}), which enables
our stability analysis for $Z$ that is needed to establish  Feller property, positive Harris recurrence, and exponential ergodicity of $Z$.

\subsection{Positive Harris recurrence and exponential ergodicity of the CLA}

In \cite[Theorem 4.1]{bibbona2020stationary} it was shown that the discrete TK model \eqref{auto_CTMC}-\eqref{inout_CTMC}  is positive Harris recurrent and converges exponentially fast to its stationary distribution. Here we obtain the analogous results for the CLA. 
Recall from \cite[Sections 3-4]{meyn1993stability} that  $Z$ is called \textbf{Harris recurrent} if there exists  a sigma-finite measure $\mu$ on $\R^d_+$ such that whenever $\mu(A)>0$,  we have
$\P_x(\tau_A<\infty)=1$  for all $x\in \R_+^d$, where $\tau_A=\inf\{t\in\R_+:\,Z_t\in A\}$ is the hitting time of a Borel set $A$.
If, furthermore, the invariant measure is finite, then $Z$ is called \textbf{positive Harris recurrent}. 


\begin{thm}[Positive recurrence]\label{thm:stationaryExist}
The solution $Z$ to \eqref{Reflected_General} is positive Harris recurrent and it has a unique stationary distribution $\pi$.
Furthermore, all moments of $\pi$ are finite.
\end{thm}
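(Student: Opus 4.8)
The plan is to verify a Foster–Lyapunov drift condition for the CLA using the function $U$ in \eqref{lyapunov_function}, combine it with the promised compatibility of $U$ with the reflection field (Lemma \ref{lem:local_time_out}) so that the boundary terms in It\^o's formula do not destroy the drift inequality, and then invoke the standard stability theory of \cite{meyn1993stability} for continuous-time Markov processes. Concretely, writing $\phi(x)=|x|_1-\frac{d\lambda'}{\delta'}$ so that $U=\phi^2$, I would first compute the generator of $Z$ applied to $U$ in the interior $\R_{>0}^d$: since $\sum_{k=1}^d b(x)\cdot e_k$ telescopes on the autocatalytic part and leaves only the inflow/outflow contribution, one gets $\nabla U(x)\cdot b(x)=2\phi(x)\big(d\lambda'-\delta'|x|_1\big)=-2\delta'\,U(x)$, and the second-order term $\frac1{2V}\operatorname{tr}(\Gamma(x)\nabla^2 U(x))$ contributes something bounded by a linear function of $|x|_1$ (because $\nabla^2 U = 2\,\mathbf{1}\mathbf{1}^\top$ is constant and $\operatorname{tr}(\Gamma(x)\mathbf{1}\mathbf{1}^\top) = \sum_{i,j}\Gamma_{ij}(x)$ is affine in $x$). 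Hence $\mathcal L U(x)\le -c_1 U(x)+c_2$ on $\R_{>0}^d$ for positive constants $c_1,c_2$. This geometric drift is exactly what yields exponential ergodicity, and in particular positive Harris recurrence.

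The role of Lemma \ref{lem:local_time_out} is to control the boundary term $\frac1{\sqrt V}\nabla U(Z_t)\cdot\gamma(Z_t)\,dL_t$. Since $\gamma=b/|b|$ and $\nabla U(x)=2\phi(x)\mathbf 1$, we have $\nabla U(x)\cdot\gamma(x)=\frac{2\phi(x)}{|b(x)|}\big(d\lambda'-\delta'|x|_1\big)=-\frac{2\delta'}{|b(x)|}U(x)\le 0$; so the reflection term only helps (it pushes $U$ downward), and the drift inequality survives on all of $\R_+^d$. I would then apply It\^o–Tanaka / Dynkin to $e^{c_1 t}U(Z_t)$ to get $\E_x[U(Z_t)]\le U(x)e^{-c_1 t}+c_2/c_1$, which already shows every solution has a uniformly-in-time bounded second moment of $|Z_t|_1$; combined with Theorem \ref{T:Wellposed} (Feller property, so closed petite/compact sets behave well) and the uniform ellipticity of $\Gamma$ (which gives irreducibility and the local Doeblin/small-set property on compacts in the interior, extended to the boundary via the reflection), the Meyn–Tweedie criteria deliver a unique stationary distribution $\pi$ and positive Harris recurrence.

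For the finiteness of all moments of $\pi$, I would bootstrap: replace $U$ by $U_p(x)=(|x|_1+1)^p$ (or $\phi(x)^{2p}$) for arbitrary $p\ge1$ and redo the generator computation. The first-order term again gives $\nabla U_p\cdot b \approx -p\,\delta'\,|x|_1\,(|x|_1+1)^{p-1}$, i.e. a term of order $-|x|_1^{\,p}$, while the diffusion term is of order $|x|_1^{\,p-1}$ (one factor of $x$ from $\Gamma$, two derivatives lowering the power by two) and the reflection term is again non-positive by the same alignment of $\gamma$ with $\mathbf 1$. Thus $\mathcal L U_p\le -c_1^{(p)}U_p+c_2^{(p)}$ for each $p$, and integrating against $\pi$ (justified because $\pi$ is already known to be a finite invariant measure and $U_p$ is nonnegative, using a localization/monotone-convergence argument with stopping times $\tau_R=\inf\{t:|Z_t|_1\ge R\}$) gives $\int U_p\,d\pi\le c_2^{(p)}/c_1^{(p)}<\infty$. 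Since $p$ is arbitrary, all polynomial moments of $\pi$ are finite.

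The main obstacle I anticipate is the boundary analysis rather than the interior computation: one must make rigorous the application of It\^o's formula for the reflected process (the local time $L$ lives on $\partial\R_+^d$, including the lower-dimensional faces where two or more coordinates vanish and where the reflection direction $\gamma$ is defined by a limiting/convex-combination procedure), and one must check that the small-set condition of \cite{meyn1993stability} genuinely holds at the boundary — i.e. that compact subsets of $\R_+^d$ touching $\partial\R_+^d$ are petite for the CLA. This is where the precise regularity of the reflection field from \cite{leite2019constrained, dupuis1993sdes} and the uniform ellipticity of $\Gamma$ must be combined carefully; the algebraic drift estimates themselves are, as expected, routine telescoping.
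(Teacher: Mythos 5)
Your proposal follows essentially the same route as the paper: the drift computation $\nabla U\cdot b=-2\delta'U$ with the affine bound on $\sum_{i,j}\Gamma_{ij}(x)=d\lambda'+\delta'|x|_1$, the non-positivity of the reflection term $\nabla U\cdot\gamma\le 0$ (the paper's Lemma \ref{lem:local_time_out}), the Meyn--Tweedie criteria for positive Harris recurrence and uniqueness of $\pi$, and a drift inequality for higher powers to get all moments of $\pi$ (the paper does this via time-averaging of $\E_x[U^p(Z_s)]$ plus truncation and monotone convergence, which is the same idea as your localization argument).

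Two small points. First, your claim that for $U_p(x)=(|x|_1+1)^p$ ``the reflection term is again non-positive by the same alignment of $\gamma$ with $\mathbf 1$'' is false: $\nabla U_p\cdot\gamma$ is proportional to $d\lambda'-\delta'|x|_1$, which is strictly positive on the part of $\partial\R_+^d$ with $|x|_1<d\lambda'/\delta'$, and the local-time integral there is not controlled by a $dt$-term; you must use your parenthetical alternative $\phi(x)^{2p}$ (the paper's $U^p$), whose even power makes the boundary term $-\tfrac{2p\delta'}{|b(x)|}\phi^{2p}\le 0$ everywhere. Second, the petiteness/small-set issue you flag at the boundary is indeed the only non-routine ingredient beyond the drift estimates; the paper settles it by noting that the skeleton chain of the Feller process has all compact sets petite, citing \cite[Proposition 2.2]{sarantsev2017reflected}, rather than arguing directly from uniform ellipticity plus reflection as you sketch.
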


Next, we consider rate of convergence to stationarity. We say that $Z$ is \textbf{$f$-exponentially ergodic} for a  function $f$ if  the law of $Z_t$ converges to $\pi$ exponentially fast in the following sense: there exists a constant $\beta<1$ and a function $B:\R_+^d\to \R_+$ such that
\begin{equation}\label{f_expo}
    \| P^t(x,\cdot) - \pi \|_f \le B(x)\,\beta^t\quad \forall t \ge 0, \, x \in \R_+^d,
\end{equation}
where the $\|\cdot\|_f$-norm is defined as $ \|\mu\|_f := \sup_{g:\,|g| \le f} \left|\int_{\R_+^d} g d\mu\right|$, and
the supremum is taken over the space of Borel measurable functions $g$ on $\R_+^d$ with $|g(x)|\leq f(x)$ for all $x\in \R_+^d$. 
\begin{thm}[Exponential ergodicity]\label{thm:stationaryconv}
The solution $Z$ to \eqref{Reflected_General} is $f$-exponentially ergodic with $f=U+1$, where $U$ is defined in \eqref{lyapunov_function}.
\end{thm}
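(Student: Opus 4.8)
The plan is to deduce \eqref{f_expo} from the Foster--Lyapunov criterion for $f$-exponential ergodicity in \cite{meyn1993stability}: a $\psi$-irreducible, aperiodic, continuous-time Markov process for which every compact set is petite and which admits a norm-like function $V\ge 1$ together with constants $c>0$, $\tilde c<\infty$ and a petite set $C$ satisfying the drift condition $\mathcal{A}V\le -cV+\tilde c\,\mathbf{1}_C$ (with $\mathcal{A}$ the extended generator) is $V$-uniformly ergodic, i.e.\ $\|P^t(x,\cdot)-\pi\|_V\le B(x)\,\beta^t$ for some $\beta<1$ and $B(x)=O(V(x))$. I will apply this with $V=f=U+1$; the ``$+1$'' is there only to ensure $V\ge 1$.

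The first step is the linear drift inequality, which is essentially a restatement of Lemmas~\ref{lem:local_time_out} and~\ref{lm:lyapunov}. Writing $s=|x|_1$ and $a=d\lambda'/\delta'$, one has $\nabla U=2(s-a)\mathbf{1}$ and $\nabla^2 U=2\,\mathbf{1}\mathbf{1}^{\top}$, and the cyclic structure of $b$ and $\Gamma$ in \eqref{CLA_coe1}--\eqref{CLA_coe2} makes the off-diagonal and telescoping terms cancel, giving $b\cdot\nabla U=-2\delta' U$ and $\tfrac{1}{2V}\operatorname{tr}(\Gamma\nabla^2U)=\tfrac1V(d\lambda'+\delta' s)$; hence the generator $\mathcal{L}$ of the unconstrained diffusion satisfies $\mathcal{L}U(x)=-2\delta' U(x)+\tfrac1V(d\lambda'+\delta' s)$, and bounding $s\le a+\sqrt{U}$ together with $\tfrac1V\sqrt U\le U+\tfrac1{4V^2}$ yields $\mathcal{L}U\le -\delta' U+\kappa_V$ for an explicit constant $\kappa_V$ (Lemma~\ref{lm:lyapunov}). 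On the boundary face $\{x_k=0\}$ the $k$-th component of $b$ equals $\lambda'>0$, so $b$ does not vanish on $\partial\R_+^d$ and $\gamma\cdot\nabla U=(b\cdot\nabla U)/|b|=-2\delta' U/|b|\le 0$ there, which is Lemma~\ref{lem:local_time_out}. Applying It\^o's formula to $U(Z_t)$ and discarding the nonpositive $dL_t$ term, we obtain $\mathcal{A}(U+1)\le -\delta'(U+1)+(\delta'+\kappa_V)$, and since $U$ is norm-like on $\R_+^d$ (as $|x|_1\to\infty$ forces $U\to\infty$), splitting off half of the negative drift turns this into $\mathcal{A}(U+1)\le -\tfrac{\delta'}{2}(U+1)+\tilde c\,\mathbf{1}_C$ with $C=\{U+1\le 4(\delta'+\kappa_V)/\delta'\}$ compact and $\tilde c$ explicit.

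It then remains to check the structural hypotheses, all of which are already available from the proof of Theorem~\ref{thm:stationaryExist}. By uniform ellipticity of $\Gamma$ (subsection~\ref{SS:coe}) the process behaves inside $\R^d_{>0}$ like a nondegenerate diffusion with a strictly positive transition density, so Lebesgue measure on $\R^d_{>0}$ is an irreducibility measure and $Z$ is $\psi$-irreducible and aperiodic; being Feller (Theorem~\ref{T:Wellposed}) and irreducible, it is a $T$-process, hence every compact set is petite. Feeding the drift inequality of the previous paragraph and these properties into the criterion of \cite{meyn1993stability} gives $\|P^t(x,\cdot)-\pi\|_{U+1}\le B(x)\,\beta^t$ with $\beta<1$ and $B(x)=c_0(U(x)+1)$, which is exactly the assertion of the theorem.

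I expect the only delicate point to be the passage from the pointwise bound $\mathcal{L}U\le -\delta' U+\kappa_V$ to a genuine drift inequality for the extended generator of the \emph{reflected} process $Z$: one must run the It\^o/Dynkin argument with a localization at exit times of large balls (legitimate by Theorem~\ref{T:Wellposed}), verify that $U(Z_t)$ and the boundary local time $L_t$ do not blow up (using $\gamma\cdot\nabla U\le 0$ from Lemma~\ref{lem:local_time_out} and the finiteness of the moments of $\pi$ in Theorem~\ref{thm:stationaryExist}), and confirm that $U+1$ belongs to the domain of the extended generator in the sense required by \cite{meyn1993stability}. Everything else---the Lyapunov computation and the irreducibility/petiteness---is either elementary or inherited from the results already proven.
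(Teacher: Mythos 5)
Your proposal is correct and follows essentially the same route as the paper: the drift inequality for $U$ from Lemmas~\ref{lem:local_time_out} and~\ref{lm:lyapunov} (with the boundary local-time term discarded since $\nabla U\cdot\gamma\le 0$), fed into the Foster--Lyapunov exponential-ergodicity criterion of \cite{meyn1993stability} (the paper's Theorem~\ref{T:Feller_convInv}). The only difference is cosmetic: you verify irreducibility and petiteness of compact sets directly via ellipticity and the $T$-process property, whereas the paper disposes of these side conditions by citing the Feller property of skeleton chains and \cite[Proposition 2.2]{sarantsev2017reflected}.
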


Theorem \ref{thm:stationaryconv} implies that
\eqref{f_expo} remains true if we replace $\|\cdot\|_{f}$ by the total variation distance $\|\cdot \|_{\rm TV}$. This is because $\|\mu_1-\mu_2\|_{\rm TV}\leq \|\mu_1-\mu_2\|_{f}$ when $f\geq 1$.

Positive recurrence can fail for reflected diffusion on unbounded domains (e.g. the reflected Brownian motion with a positive drift on $[0,\infty)$). Hence
suitable conditions on the state-dependent coefficients and the reflection vector field are needed. In \cite{atar2001positive}, the authors consider a reflected diffusion 
on a convex polyhedral cone $G\subset \R^d$ with vertex at the origin, 
the  reflection vector $v_i$ is assumed to be constant on each face $G_i$ of the cone. Let $\mathfrak{C}$ be the cone spanned by $\{-v_i\}_{i}$.
The main result \cite[Theorem 2.2]{atar2001positive} asserts that
the reflected diffusion is positive recurrent and has a unique invariant distribution if there is a bounded set $A\subset G$ such that the vector $b(x)$ is in the cone $\mathfrak{C}$ and uniformly away from the boundary of $\mathfrak{C}$, for all $x\in G\setminus A$.

Unfortunately, the result in \cite{atar2001positive,budhiraja2007long} cannot be applied to our CLA  directly since the reflection vector field for $Z$ is state-dependent. On other hand, the papers \cite{dupuis1993sdes,kang2014characterization,leite2019constrained} consider state-dependent reflection vector field on non-smooth domains, but these results are not concerned with positive recurrence. 
We shall prove Theorems \ref{thm:stationaryExist} and \ref{thm:stationaryconv} by the Foster-Lyapunov function approach in this paper.

\subsection{Characterization of the stationary distribution of the CLA}

Characterization of stationary distributions of a general class of reflected diffusions is given in \cite{kang2014characterization}. It was shown
that a stationary distribution, should it exists, must satisfy an adjoint linear elliptic partial differential equation with oblique derivative boundary conditions. This equation is called the basic adjoint relationship (BAR) for reflected Brownian motion in \cite{harrison1987brownian,dai1992reflected}.
Here, in subsection \ref{SS:Proofs}, we verify the conditions in  \cite[Theorems 2 and 3]{kang2014characterization} and apply those results to our CLA. 

Following \cite{kang2014characterization}, we let  $C_c^2(\R^d_+)$ be the space of twice   continuously differentiable functions on $\R_+^d$ with compact support. We consider the space of functions 
\begin{align}
    {\cal H} := \left\{f \in C^2_c\left(\R_+^d\right) \oplus \R:\, \langle \nabla f(x), \gamma(x)\rangle \ge 0,\, \forall x \in \partial \R_+^d \right\},
\end{align}
where  $C_c^2(\R^d_+)\oplus \R$ denotes the space of functions in $C_c^2(\R^d_+)$ plus a constant in $\R$. We also let ${\cal L}$ be the differential operator:
\begin{align}\label{generator}
    {\cal L}f(x) = \frac{1}{2V}\sum_{i,j = 1}^d \Gamma_{i,j}(x) \frac{\partial^2 f}{\partial x_i \partial x_j}(x) +\sum_{i = 1}^d b_i (x) \frac{\partial f}{\partial x_i}(x).
\end{align}

\begin{prop}\label{L:stationary1}
A probability measure $\pi$ on $\R_+^d$ is a stationary distribution of $Z$ if and only if $\pi(\partial \R_+^d)=0$ and 
    $\int_{\R_+^d} {\cal L}f(x)\, \pi(dx) \le 0$ for all $f \in {\cal H}$.
\end{prop}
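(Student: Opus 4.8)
The plan is to place equation \eqref{Reflected_General} within the class of obliquely reflected diffusions studied in \cite{kang2014characterization} and then quote their Theorems~2 and~3, which establish exactly this ``iff'' characterization through the basic adjoint relationship (BAR). In our setting the domain is the convex polyhedral cone $G=\R_+^d$, with faces $F_k=\{x\in\R_+^d:x_k=0\}$, inward unit normals $n_k=e_k$, and reflection direction $\gamma(x)=b(x)/|b(x)|$ on each $F_k$. The verification splits into three parts: regularity of $G$; regularity and nondegeneracy of the data $(b,\Gamma)$; and the conditions on the state-dependent reflection field $\gamma$, including at the intersections of several faces.

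The first two parts are routine. A convex polyhedron satisfies the standing geometric assumptions of \cite{kang2014characterization}. The drift $b$ in \eqref{CLA_coe1} and the matrix $\Gamma$ in \eqref{CLA_coe2} are polynomial, hence $C^\infty$ and locally Lipschitz, and $\Gamma$ is symmetric and uniformly elliptic on every compact set (subsection \ref{SS:coe}); consequently $\sigma=\sqrt{\Gamma}$ is locally Lipschitz as well. Also, for any $f\in C_c^2(\R_+^d)\oplus\R$ the function $\mathcal{L}f$ is bounded with compact support, so $\int_{\R_+^d}\mathcal{L}f\,d\pi$ is finite for every probability measure $\pi$ and the inequality in the statement is meaningful.

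For the reflection field, the key observation is that $b$ is strictly inward along every face: if $x\in F_k$ then $b_k(x)=\kappa'(x_{k-1}-x_{k+1})\cdot 0+\lambda'-\delta'\cdot 0=\lambda'>0$, so $|b(x)|\ge\lambda'>0$ on all of $\partial\R_+^d$. Hence $\gamma=b/|b|$ is a well-defined $C^\infty$ vector field on $\partial\R_+^d$, it is bounded (a unit vector), and $\langle\gamma(x),n_k\rangle=\lambda'/|b(x)|>0$ on $F_k$, so reflection is genuinely oblique-inward on each face. Moreover, because $\gamma$ is given by the \emph{same} formula on all faces, at a point $x\in F_{k_1}\cap\cdots\cap F_{k_m}$ the admissible pushing directions form the single ray $\R_{\ge 0}\,\gamma(x)$, and $\langle\gamma(x),e_{k_j}\rangle=\lambda'/|b(x)|>0$ for each active index $k_j$; this makes the completely-$\mathcal{S}$ / geometric feasibility condition at corners immediate. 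These are precisely the structural properties underlying the path-wise existence and uniqueness already invoked via \cite[Theorem~6.1]{leite2019constrained} and \cite[Theorem~5.1]{dupuis1993sdes}, so the reflection hypotheses of \cite{kang2014characterization} hold.

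With the standing assumptions checked, the remaining ingredients come from the earlier results: by Theorem \ref{T:Wellposed}, $Z$ is a Feller strong Markov process solving \eqref{Reflected_General} (hence the associated submartingale problem), and by Theorem \ref{thm:stationaryExist} it is positive Harris recurrent with a unique stationary distribution $\pi$ having finite moments, so the expected boundary local time is integrable under $\pi$ and the BAR holds rigorously. The BAR of \cite{kang2014characterization} then says $\pi$ is stationary iff $\pi(\partial\R_+^d)=0$ and there is a finite boundary measure $\nu$ with $\int_{\R_+^d}\mathcal{L}f\,d\pi+\int_{\partial\R_+^d}\langle\nabla f,\gamma\rangle\,d\nu=0$ for all $f\in C_c^2(\R_+^d)\oplus\R$; restricting to $f\in\mathcal{H}$, where $\langle\nabla f,\gamma\rangle\ge0$ on the boundary, forces $\int_{\R_+^d}\mathcal{L}f\,d\pi\le0$, which is the ``only if'' direction, and the converse is \cite[Theorem~3]{kang2014characterization}, applicable since we have a unique stationary distribution. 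I expect the main obstacle to be the careful bookkeeping at the non-smooth part of $\partial\R_+^d$ when matching our polyhedral cone and state-dependent $\gamma$ to the precise hypotheses of \cite{kang2014characterization}, and confirming that $Z$ spends zero Lebesgue time on $\partial\R_+^d$ so that $\pi(\partial\R_+^d)=0$ is forced; the latter follows from uniform ellipticity of $\Gamma$ together with the inward-pointing property of $\gamma$ and the Lyapunov estimates for $U$ (Lemmas \ref{lem:local_time_out} and \ref{lm:lyapunov}).
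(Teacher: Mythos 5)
Your overall route is the same as the paper's: verify the standing assumptions of \cite{kang2014characterization} for the orthant $\R_+^d$ with the state-dependent reflection field $\gamma=b/|b|$, and then quote their characterization theorem. Your key computation --- $b_i(x)=\lambda'>0$ whenever $x_i=0$, hence $\langle e_i,\gamma(x)\rangle=\lambda'/|b(x)|>0$ on every face and at every corner --- is exactly the paper's verification that the exceptional boundary set ${\cal V}$ of \cite{kang2014characterization} is empty, and your use of uniform ellipticity of $\Gamma$, smoothness of the coefficients, and well-posedness of the submartingale problem (via Theorem \ref{T:Wellposed} together with \cite{kang2017submartingale}) matches the paper's proof.

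The one genuine misstep is in how you apportion the two directions of the equivalence. The full ``if and only if'' is the content of \cite[Theorem 2]{kang2014characterization} alone; your claim that the converse direction (``$\int_{\R_+^d} {\cal L}f\,d\pi\le 0$ for all $f\in{\cal H}$ implies stationarity'') follows from \cite[Theorem 3]{kang2014characterization} does not work. Theorem 3 there is a sufficiency result that requires a nonnegative $C^2$ density $p$ solving the adjoint PDE with oblique boundary conditions --- it is what this paper uses for Proposition \ref{L:stationary2} --- and the existence of such a $p$ is unknown here, nor does uniqueness of $\pi$ make that theorem applicable to an arbitrary measure satisfying the inequality. Likewise, the intermediate BAR with a finite boundary measure $\nu$, the integrability of the expected boundary local time deduced from finite moments, and the claim that $\pi(\partial\R_+^d)=0$ is ``forced'' by ellipticity and the Lyapunov estimates are neither proved nor needed: the condition $\pi(\partial\R_+^d)=0$ is built into the statement of \cite[Theorem 2]{kang2014characterization} itself. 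Replacing that final paragraph by a direct appeal to Theorem 2, after the hypothesis verification you already carried out, closes the gap and reproduces the paper's argument.
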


\medskip

A more explicit way to characterize the stationary distribution is through a partial differential equation as in \cite[Theorem 3]{kang2014characterization}. For this we recall that the adjoint operator ${\cal L}^*$ of
${\cal L}$ is
\begin{align}\label{adjoint_generator}
    {\cal L}^*f(x) = \frac{1}{2V}\sum_{i,j = 1}^d \frac{\partial^2 }{\partial x_i \partial x_j}\left(\Gamma_{i,j}(x) f(x)\right) -\sum_{i = 1}^d \frac{\partial }{\partial x_i}\left(b_i (x) f(x)\right).
\end{align}
\begin{prop}\label{L:stationary2}
Suppose there exists a nonnegative integrable function $p \in {\cal C}^2(\R_+^d)$ that satisfies the following three relations:
\begin{enumerate}
    \item ${\cal L}^*p(x) = 0$ for all $x \in \R_+^d$;
    \item 
    For each $i\in\{1,2,\cdots,d\}$ and $x \in \left\{x_i = 0\right\}\cap \left\{x_j >0,\,\forall j \in \{1,2,\cdots,d\}\backslash{\{i\}}\right\}$, 
    \begin{align*}
        -2p(x)\lambda' + \lambda'\frac{\partial p}{\partial x_i} -\lambda' \nabla \cdot p(x)\gamma(x) + \frac{\partial p}{\partial x_i}\lambda' + p(x) \left(\kappa'(x_{i-1}+x_{i+1}) + \delta'\right) = 0;
    \end{align*}
    \item for each $1 \le i\neq j \le d$ and $x \in  \{x_i = 0\}\cap  \{x_j = 0\} \cap \partial \R_+^d$, $p(x) = 0$.
\end{enumerate}
Then the probability measure on $\R_+^d$ defined by 
\begin{align*}
    \pi(A) := \frac{\int_A p(x)dx}{\int_{\R_+^d} p(x)dx},\qquad A\in \mathcal{B}(\R_+^d),
\end{align*}
is a stationary distribution for the process  $Z$.
\end{prop}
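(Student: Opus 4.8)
The plan is to verify the two conditions of Proposition~\ref{L:stationary1} for the measure $\pi(dx)=p(x)\,dx\big/\!\int_{\R_+^d}p$, which is a well defined probability measure since $p\ge 0$ and $0<\int_{\R_+^d}p<\infty$ by hypothesis. The condition $\pi(\partial\R_+^d)=0$ is immediate, as $\pi$ is absolutely continuous with respect to Lebesgue measure on $\R_+^d$ and $\partial\R_+^d$ is Lebesgue-null. Since ${\cal L}$ annihilates constants, the remaining condition $\int_{\R_+^d}{\cal L}f\,d\pi\le 0$ for all $f\in{\cal H}$ reduces to showing $\int_{\R_+^d}{\cal L}g(x)\,p(x)\,dx\le 0$ for every $g\in C_c^2(\R_+^d)$ with $\langle\nabla g,\gamma\rangle\ge 0$ on $\partial\R_+^d$. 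Equivalently, I would check that the CLA satisfies the standing hypotheses of \cite[Theorem~3]{kang2014characterization} and that relations~1--3 are exactly the specialization of that theorem's elliptic PDE with oblique boundary conditions to our coefficients.

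First I would record the regularity needed to invoke \cite{kang2014characterization}: $\Gamma$ is uniformly elliptic on $\R_+^d$ (subsection~\ref{SS:coe}); $\R_+^d$ is a convex polyhedron; and $\gamma=b/|b|$ is well defined and smooth in a neighbourhood of $\partial\R_+^d$, because $b_i(x)=\lambda'>0$ whenever $x_i=0$, so $b$ never vanishes there, and the reflection directions meet the compatibility conditions at the faces' intersections --- the same niceness that underlies Theorem~\ref{T:Wellposed}. With this in place, the substance is an integration-by-parts identity. Because $g$ is compactly supported, all integrals are over a bounded corner region on which $p$ and its first two derivatives are bounded; Green's identity (integrating by parts twice on the second-order part of ${\cal L}$ and once on the first-order part) gives
\begin{equation*}
\int_{\R_+^d}{\cal L}g\cdot p\,dx=\int_{\R_+^d}g\cdot{\cal L}^*p\,dx+{\cal B}(g,p),
\end{equation*}
where ${\cal L}^*$ is as in \eqref{adjoint_generator} and ${\cal B}(g,p)$ is a sum of surface integrals over the codimension-one faces $F_i=\{x_i=0\}\cap\{x_j>0:\ j\ne i\}$, the lower strata of $\partial\R_+^d$ carrying no $(d-1)$-dimensional measure. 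Relation~1 makes $\int_{\R_+^d}g\cdot{\cal L}^*p\,dx=0$, so it remains to show ${\cal B}(g,p)\le 0$.

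The crux is the face-by-face computation of ${\cal B}(g,p)$, which collapses because of the structure of $\Gamma$ and $b$: on $F_i$ one has $\Gamma(x)=\lambda'e_{ii}$ (the off-diagonal entries $\Gamma_{i,i\pm1}=-\kappa'x_ix_{i\pm1}$ and the $x_i$-proportional part of $\Gamma_{ii}$ vanish), $b_i(x)=\lambda'$, $\partial_i\Gamma_{ii}(x)=\kappa'(x_{i-1}+x_{i+1})+\delta'$, while $\partial_{i\pm1}\Gamma_{i,i\pm1}$ and $\partial_{i\pm1}\Gamma_{ii}$ vanish on $F_i$. Substituting, the surface integrand on $F_i$ involves $g$ only through the normal derivative $\partial_ig$; I would write $\partial_ig=\gamma_i^{-1}\big(\langle\nabla g,\gamma\rangle-\sum_{k\ne i}\gamma_k\partial_kg\big)$, legitimate since $\gamma_i=\lambda'/|b|>0$ on $F_i$, and then integrate the remaining tangential derivatives $\partial_kg$ ($k\ne i$) by parts within the face. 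The corner terms this produces live on the double intersections $\{x_i=x_k=0\}$, where $p=0$ by relation~3, hence vanish. What is left is
\begin{equation*}
{\cal B}(g,p)=-\frac{1}{2V}\sum_{i=1}^d\int_{F_i}|b(x)|\,p(x)\,\langle\nabla g(x),\gamma(x)\rangle\,d\sigma(x)+\sum_{i=1}^d\int_{F_i}({\cal A}_ip)(x)\,g(x)\,d\sigma(x),
\end{equation*}
where ${\cal A}_i$ is a first-order boundary operator whose vanishing on $F_i$ is precisely relation~2, up to the normalization fixed by $\Gamma|_{F_i}=\lambda' I$ and the $1/V$ in \eqref{generator}. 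Relation~2 kills the second sum, and the first sum is $\le0$ since $p\ge0$ and $\langle\nabla g,\gamma\rangle\ge0$ on $\partial\R_+^d$. Hence $\int_{\R_+^d}{\cal L}g\cdot p\,dx={\cal B}(g,p)\le0$; dividing by $\int_{\R_+^d}p$ and recalling Proposition~\ref{L:stationary1}, $\pi$ is stationary for $Z$.

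The main obstacle is this boundary computation: performing Green's identity correctly on the non-smooth orthant and verifying that, after extracting the $\langle\nabla g,\gamma\rangle$-term with the right sign and discarding the corner contributions (which is what relation~3 does), the residual coefficient of $g$ on each face is exactly the expression in relation~2 --- relations~2 and~3 being reverse-engineered from this identity. The delicate point is getting the normalization in relation~2 to line up with the scaled generator \eqref{generator} and with the convention used in \cite{kang2014characterization}; once that bookkeeping closes, the rest (the reduction to $g\in C_c^2$, $\pi(\partial\R_+^d)=0$, the ellipticity and smoothness checks, and the final sign argument) is routine.
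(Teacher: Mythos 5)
Your argument is correct in outline, but it takes a genuinely different route from the paper. The paper's proof is essentially a citation: it checks the standing hypotheses of \cite[Theorem 3]{kang2014characterization} (Assumption 2 and well-posedness of the submartingale problem, already verified in the proof of Proposition \ref{L:stationary1}, plus smoothness of the polynomial coefficients $b$ and $\Gamma$) and then reads relations 1--3 off as the specialization of equations (12)--(16) of that paper to the CLA data $(\R_+^d,\gamma,b,\Gamma)$. You instead give a self-contained derivation: reduce to Proposition \ref{L:stationary1}, apply Green's identity to $\int \mathcal{L}g\,p\,dx$ for $g\in C_c^2$ with $\langle\nabla g,\gamma\rangle\ge 0$, use relation 1 to kill the bulk term, exploit the structure of row $i$ of $\Gamma$ and $b_i=\lambda'$ on the face $\{x_i=0\}$, extract the $\langle\nabla g,\gamma\rangle$ term through $\partial_i g=\gamma_i^{-1}(\langle\nabla g,\gamma\rangle-\sum_{k\ne i}\gamma_k\partial_k g)$ (legitimate since $\gamma_i=\lambda'/|b|>0$ there), and dispose of the corner terms by relation 3. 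This is in effect a re-proof of the relevant part of \cite[Theorem 3]{kang2014characterization} for this model, and it has the virtue of explaining where relations 2 and 3 come from, at the cost of the bookkeeping you yourself flag: you have not actually closed the identification of your residual face operator $\mathcal{A}_i$ with relation 2 as printed (note in particular that your face identity carries explicit $1/(2V)$ factors and a tangential divergence $\sum_{k\ne i}\partial_k(b_kp)$, whereas relation 2 is written with $\lambda'\,\nabla\cdot(p\gamma)$ and no $V$), and your parenthetical $\Gamma|_{F_i}=\lambda' I$ is inaccurate (only the $i$-th row/column of $\Gamma$ simplifies on $F_i$). The paper's citation-based proof avoids this normalization question entirely, while your computation would, if carried through carefully, both prove the proposition and verify (or correct) the exact form of the stated boundary relation.
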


The probability measure $\pi$ in Proposition \ref{L:stationary1} exists and is unique, by Theorem \ref{thm:stationaryExist}. 
We do not know if the function $p \in {\cal C}^2(\R_+^d)$ in Proposition \ref{L:stationary2} exists or not.


\section{Simulation study for the TK model and the CLA}\label{S:simulation}

In this section, we present simulation results for the CTMC  \eqref{auto_CTMC}-\eqref{inout_CTMC} and its associated CLA \eqref{Reflected_General}. In particular, dynamical properties of the CLA, including its stationary distribution $\pi$ (guaranteed in Theorem \ref{thm:stationaryExist}) and its finite time trajectory, are compared with those of the  CTMC in various dimensions. To begin, in Figure \ref{fig:finite_traj} we show some sample trajectories of the CTMC $X$ that solves \eqref{auto_CTMC}-\eqref{inout_CTMC} in dimensions $d=2,3,4,5,6$, under the parameter $\lambda = 1/4, \delta = 1/64$ and $\kappa = 1/16$ in \eqref{auto}-\eqref{inout}. Under this choice of rate constants, all processes tend to spend most of the time on the boundary, i.e., some species are almost extinct while others are abundant.

\begin{figure}[!htbp]
\begin{subfigure}{0.5\textwidth}
\includegraphics[width=\linewidth, height=5cm]{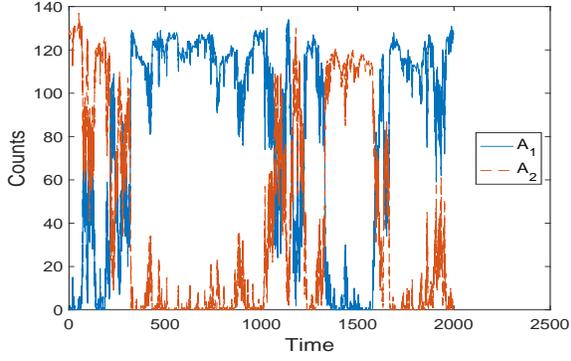}
\caption{$d=2$}
\label{fig:traj_2D}
\end{subfigure}
\begin{subfigure}{0.5\textwidth}
\includegraphics[width=\linewidth, height=5cm]{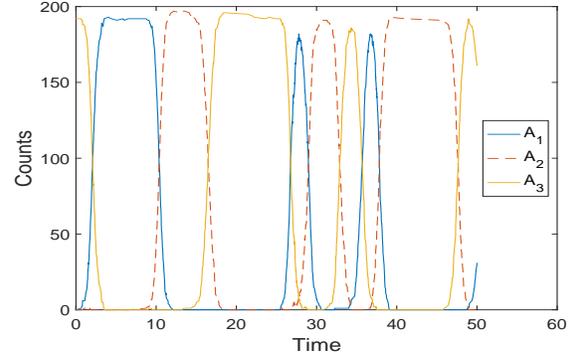}
\caption{$d=3$}
\label{fig:traj_3D}
\end{subfigure}

\begin{subfigure}{0.5\textwidth}
\includegraphics[width=1\linewidth, height=5cm]{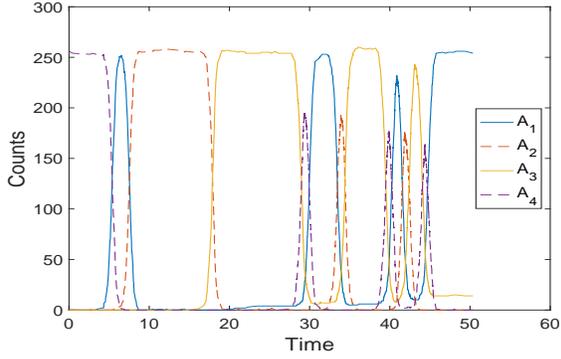}
\caption{$d=4$}
\label{fig:traj_4D}
\end{subfigure}
\begin{subfigure}{0.5\textwidth}
\includegraphics[width=1\linewidth, height=5cm]{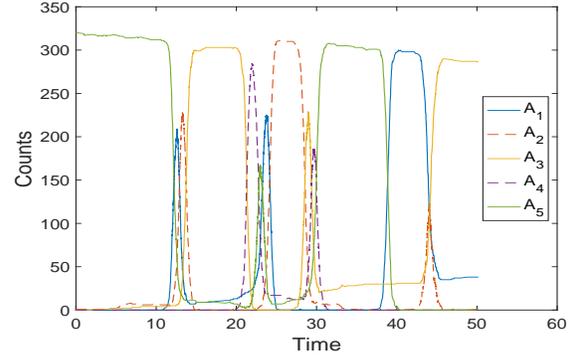}
\caption{$d=5$}
\label{fig:traj_5D}
\end{subfigure}
\begin{subfigure}{0.5\textwidth}
\includegraphics[width=1\linewidth, height=5cm]{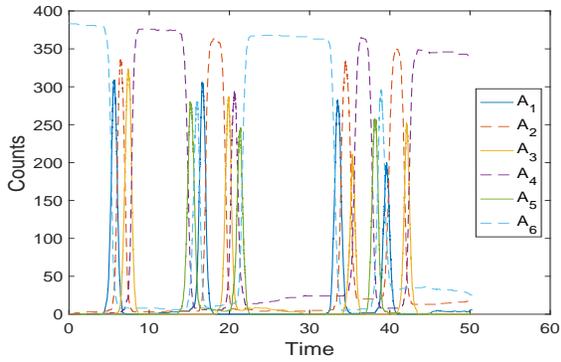}
\caption{$d=6$}
\label{fig:traj_6D}
\end{subfigure}
\caption{CTMC trajectories of standard TK model in \eqref{auto_CTMC}-\eqref{inout_CTMC} is plotted for different dimensions under the parameter $\lambda = 1/4, \delta = 1/64$ and $\kappa = 1/16$. The initial condition is $(0,\cdots,\,0,\,16d)$, i.e. $X^d_0 = 16d$ and zero for all other species. }
\label{fig:finite_traj}
\end{figure}


In Section \ref{sec:2DTK}, we simulate the CTMC and the CLA for $d=2$. Our simulations suggest that the stationary distributions and the hitting time distributions of CTMC are well approximated by those of the CLA in dimension $d=2$. Similar simulations and suggestions are obtained for $d=3$ in Section \ref{sec:3DTK}, there we also emphasize a discrepancy of finite trajectory property of TK models between $d=2$ and $d=3$. In section \ref{sec:HDTK}, we focus on higher dimensional TK models. For example, we 
give a description to the switching behaviors between meta-stable patterns of the 6-dimensional CTMC.

\smallskip

\noindent
{\bf Simulation schemes. }All CTMCs are simulated using the Gillespie algorithm \cite{gillespie1977exact}. Special care need to be taken  when simulating CLA, when the trajectory is near the boundary. Here all CLAs  are simulated via the modified Euler Maruyama method proposed in \cite{bossy2004symmetrized}. An alternative simulation scheme for the CLA may also be developed by a suitable discrete version of the local time \cite{fan2016discrete,chen2017hydrodynamic}. 

\FloatBarrier

\subsection{Simulation results for the 2-dimensional TK model}
\label{sec:2DTK}


In dimension $d=2$, our
simulations indicate that the CLA  \eqref{Reflected_General} nicely captures the stationary distribution of CTMC \eqref{auto_CTMC}-\eqref{inout_CTMC}. Furthermore, we consider  the time between extinction events of the two species (called the switching time). We demonstrate that the switching time distribution of the CLA capture that of the CTMC when the volume $V$ is large enough. 


For the 2-dimensional CTMC, explicit expression of the stationary distribution was derived in \cite{bibbona2020stationary}.  
Under the classical scaling \eqref{classicalScaling} with 
$$D = \lambda' = \delta',$$ 
the system exhibits different stationary behavior for different choices of $D$: when $D > 2/V$, the stationary distribution is unimodal; whereas when $D=2/V$, stationary distribution conditioning on the level sets $x+y=n$ is uniform; and when $D < 2/V$, stationary distribution is heavily concentrated on both boundaries where one species is almost extinct. Such behavior can be visualized in row 1 of Figure \ref{fig:stationary_2d}, where the stationary distributions are plotted for different choice of $D$.

\begin{figure}[!ht]
    \begin{subfigure}{0.32\textwidth}
    \includegraphics[width=\linewidth, height=4cm]{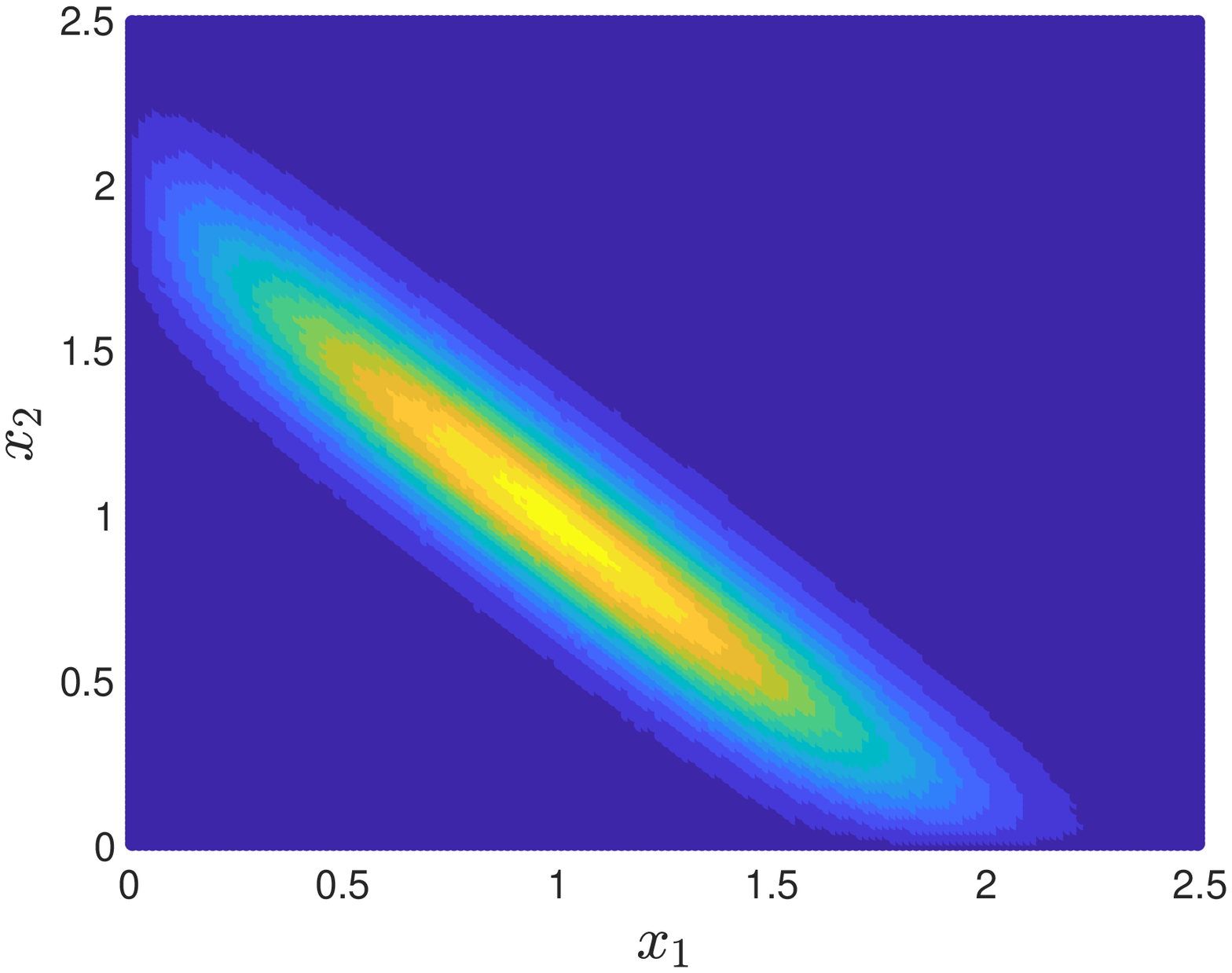}
    \end{subfigure}
    \begin{subfigure}{0.32\textwidth}
    \includegraphics[width=\linewidth, height=4cm]{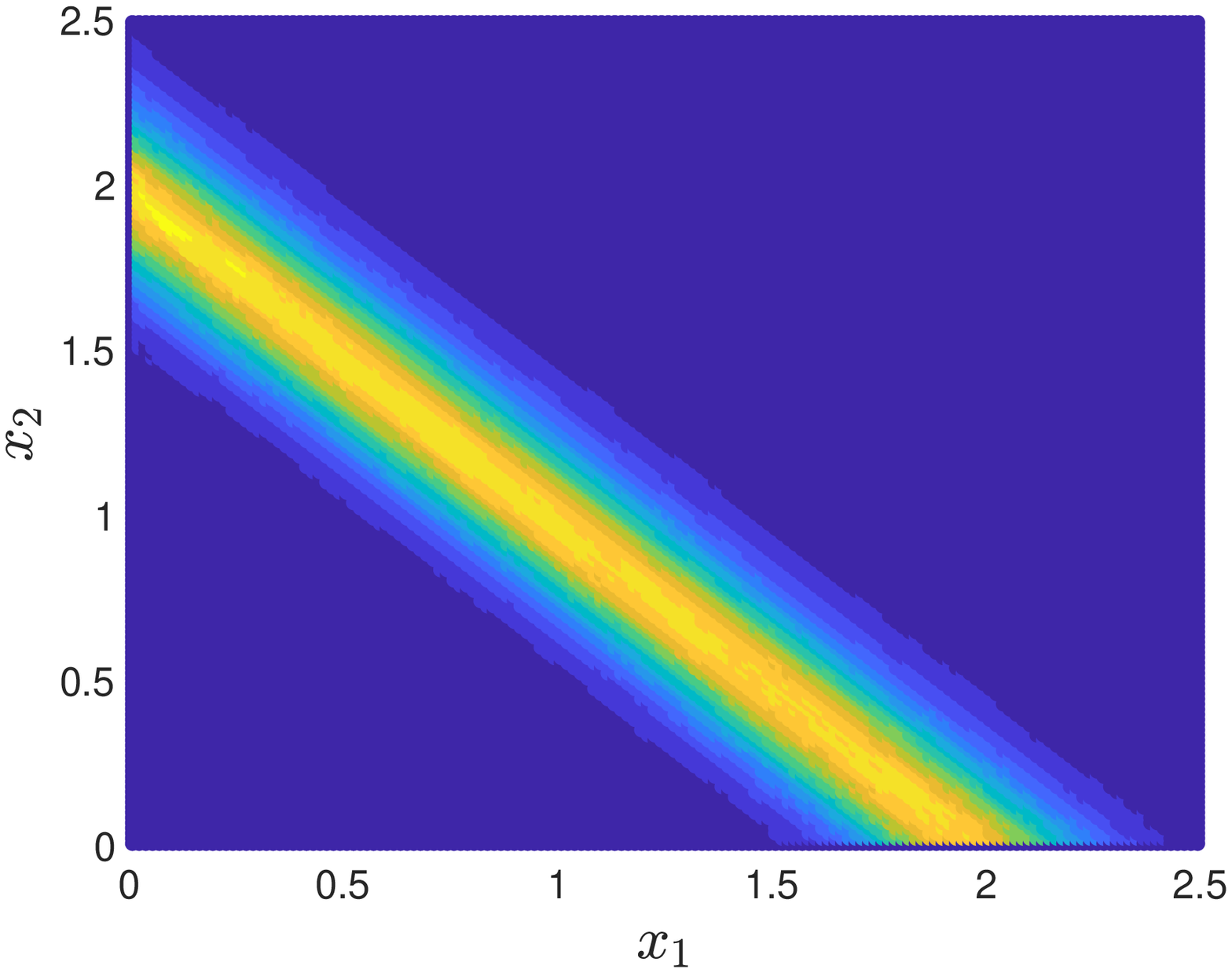}
    \end{subfigure}
    \begin{subfigure}{0.36\textwidth}
    \includegraphics[width=\linewidth, height=4cm]{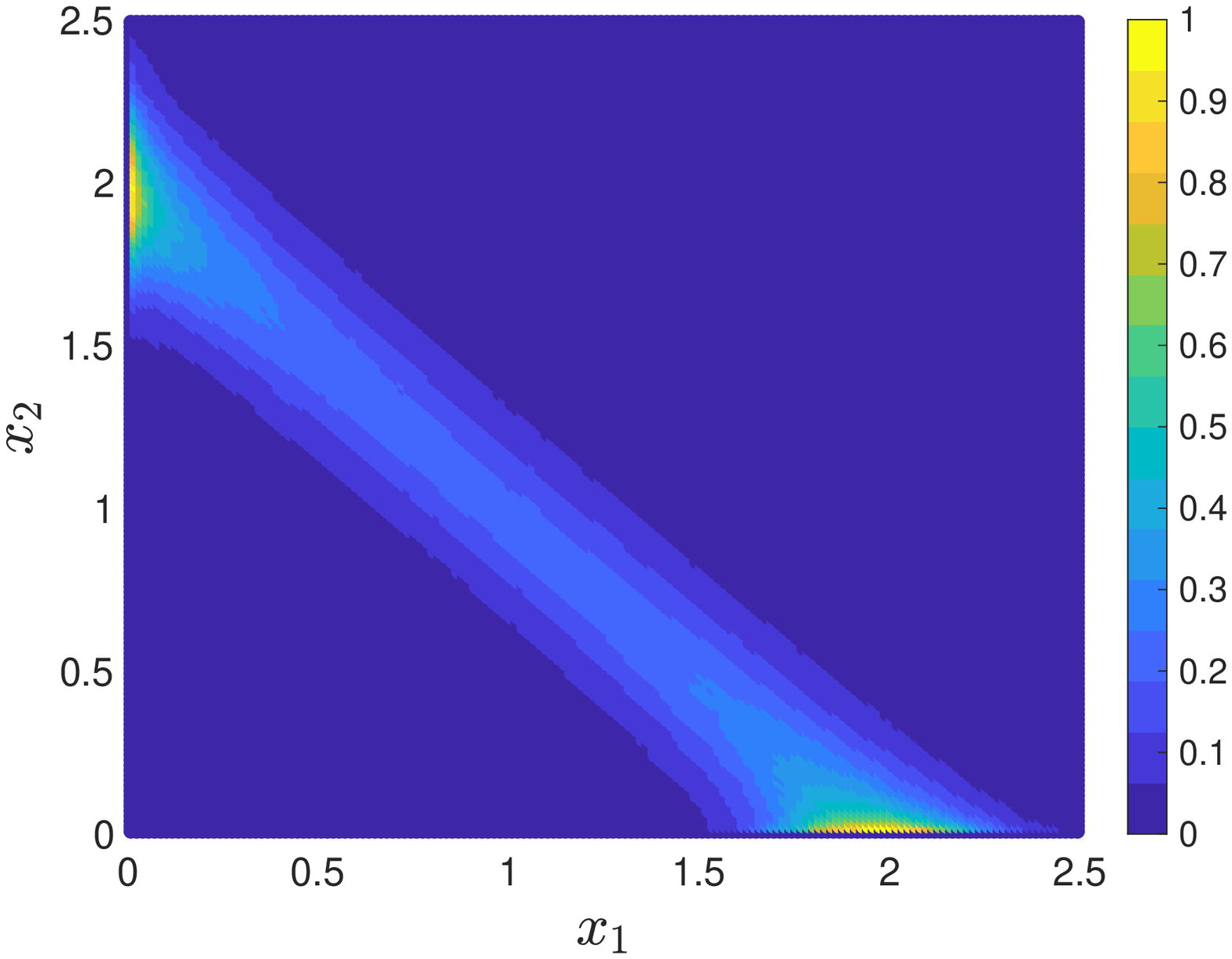}
    \end{subfigure}
    
    \begin{subfigure}{0.32\textwidth}
    \includegraphics[width=\linewidth, height=4cm]{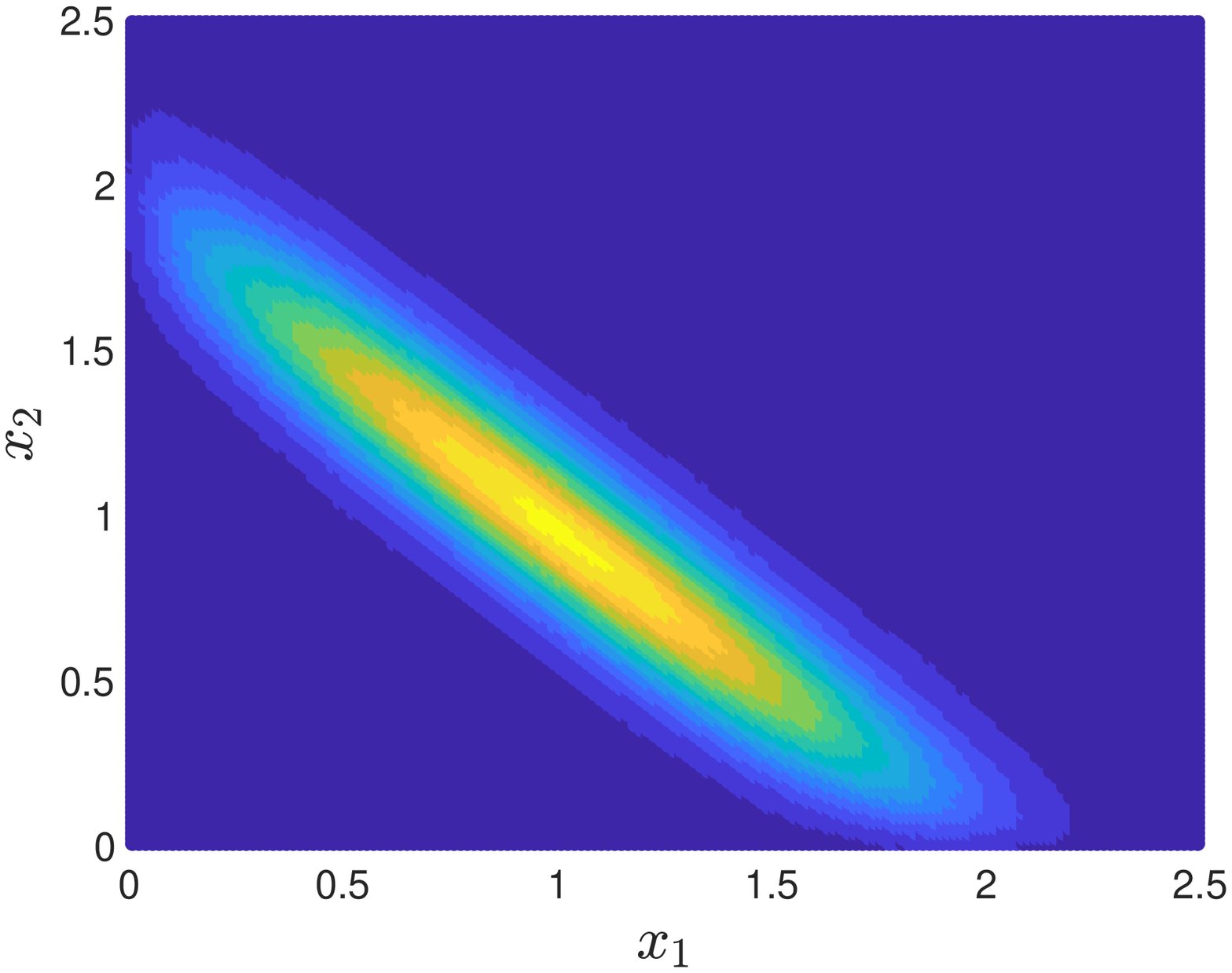}
    \end{subfigure}
    \begin{subfigure}{0.32\textwidth}
    \includegraphics[width=\linewidth, height=4cm]{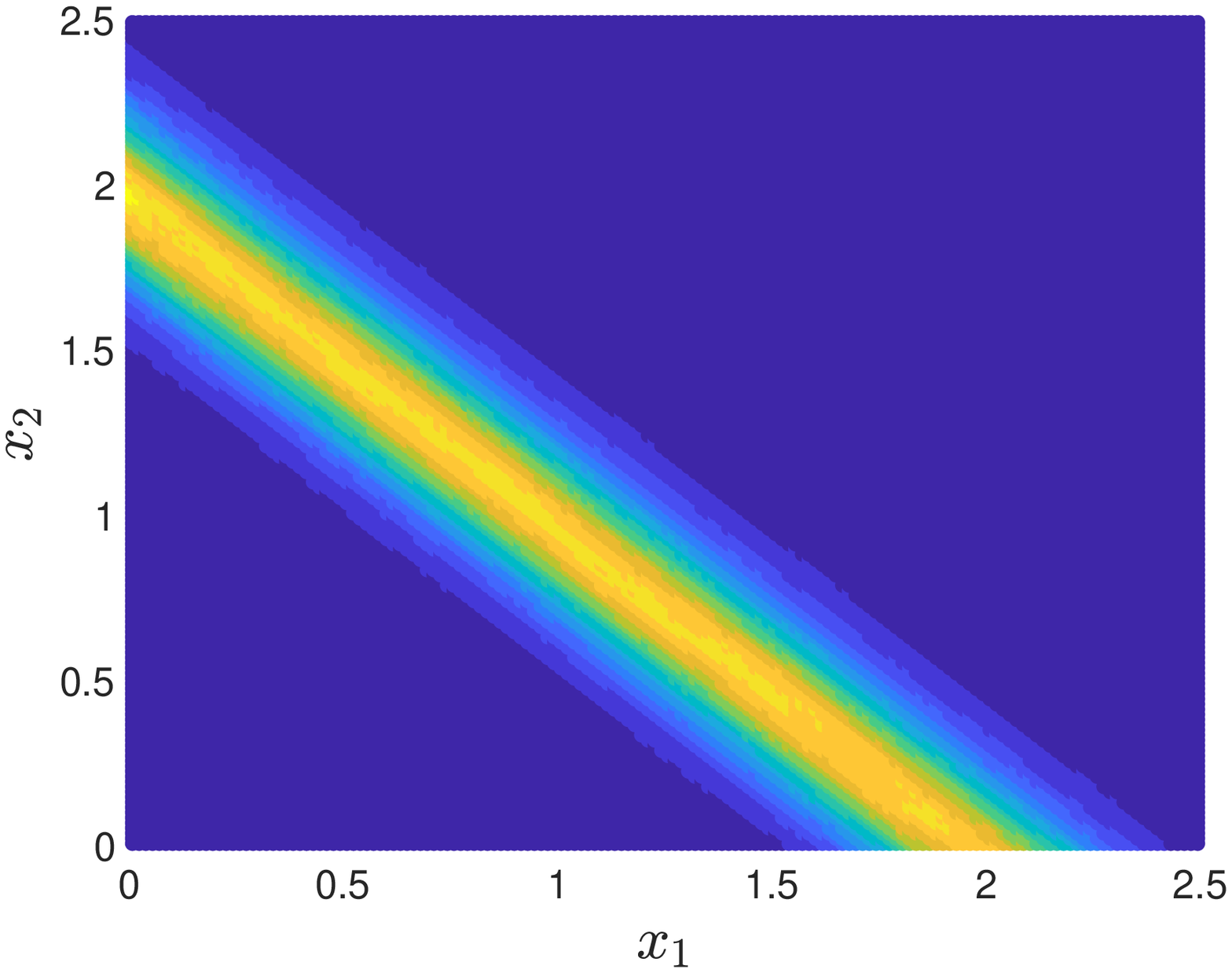}
    \end{subfigure}
    \begin{subfigure}{0.36\textwidth}
    \includegraphics[width=\linewidth, height=4cm]{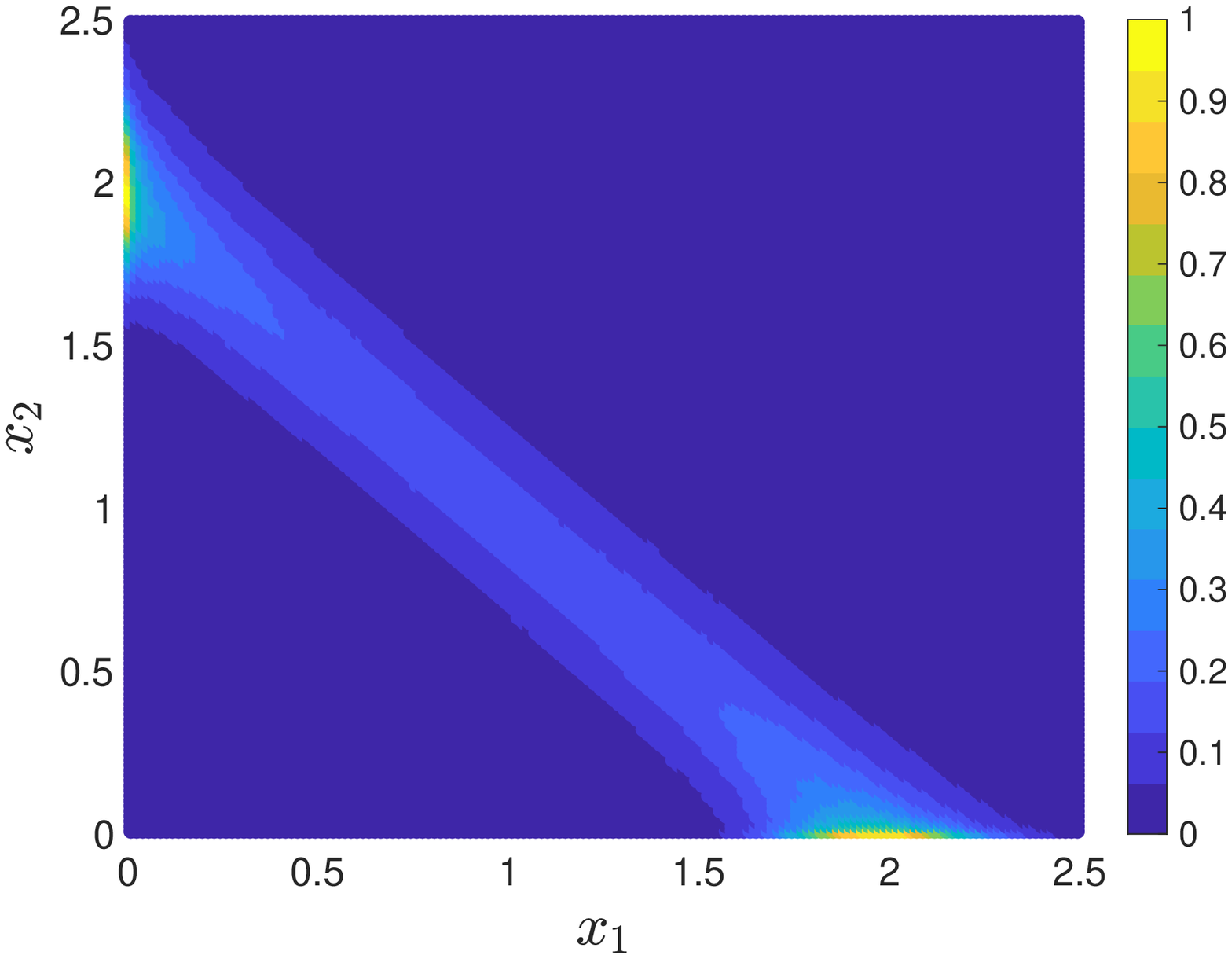}
    \end{subfigure}
    \caption{Stationary distribution of of CTMC (row 1) in \eqref{auto_CTMC}-\eqref{inout_CTMC} and CLA (row 2) in \eqref{Reflected_General} for $d=2$. All stationary quantities in Figure \ref{fig:stationary_2d} are obtained via time averaging over long time trajectory. The trajectory is simulated until $T=10^6$ with $V = 64, \kappa'= 1$, and $D = \lambda' = \delta'$ is given by $1/16, 1/32, 1/64$ for each column, from left to right respectively.
    }
    \label{fig:stationary_2d}
\end{figure}

In row 2 of Figure \ref{fig:stationary_2d}, stationary densities, obtained via time averaging over long-time trajectories of the CLA \eqref{Reflected_General}, are plotted for the same parameters as the associated CTMC. In all three cases, the CLA in \eqref{Reflected_General} accurately captures the stationary behavior of the CTMC in row 1 of Figure \ref{fig:stationary_2d}.

As $D<2/V$, finite trajectory of 2-dimensional TK model can be observed in Figure \ref{fig:traj_2D}, where the process spends most of its time on the boundary, i.e., one species is almost extinct. Hence switching time between boundaries is an important metric describing finite time dynamics of TK models. For simplicity, we will define \textbf{switching time} as the first time $X^2$ reaches 0, assuming the process starts initially with no $A_1$, i.e. $(X^1_0,X^2_0) = (0,2V)$. Note that finite time properties, including switching time distributions, can not be extracted by analyzing the stationary distributions.  

In an attempt to obtain explicit formula for the switching time, we consider a 1-dimensional approximation of  the 2-dimensional CLA. Roughly, we assume that the total mass evolves in a much slower time-scale than that of the differences between the two species, so that $Z^1_t + Z^2_t\approx Z^1_0 + Z^2_0 = n$ for a long time period (under the timescale for $Z^2$). Hence we can approximate $Z^2_t$ by $n-Z^1_t$ and the equation for $Z^1$ is given by
\begin{align}\label{eq:CLA1D_V_2}
    d S_t = S_0 + (\lambda' -\delta' S_t)dt + \frac{1}{\sqrt{V}}\sqrt{2\kappa' S_t(n - S_t) +\lambda' + \delta' S_t} dW'_t +m(S_t)dL_t, 
\end{align}
where $L_t$ is the local time of $S_t$ on the boundary of the interval $[0,n]$ and $m$ is the inward normal vector at the boundary of the interval (so $m(0)=1$ and $m(n)=-1$). Then for any interval $[I_-,I_+]$ and  an arbitrary subinterval $J \subset [I_-,I_+]$, we let $\tau^{J} =\inf\{t\geq 0:\,S_t \notin J\}$ be the first time process $S_t$ exits the subinterval $J$. The expectation $\E_x\left[\tau^{J}\right]$ solve the boundary value problem as in \cite{karlin1981second}, 
\begin{equation}
\label{HittingE_in}
\begin{cases}
    \mathcal{L}f(x)&=-1 \qquad\qquad \text{if}\quad x\in (I_-,I_+)\\
    f'(I_-)&=f(I_+)=0
\end{cases},
\end{equation}
where $\mathcal{L}$ is the generator of  \eqref{eq:CLA1D_V_2}, namely, suppose $f \in C^2([0,n])$, then 
\begin{align*}
    {\cal L} f = \frac{1}{2V}\left(2\kappa' x(n-x) + \lambda'+\delta' x\right) f'' +(\lambda' - \delta' x)f'.
\end{align*}



Hence the expected hitting time of $n$ can be expressed explicitly,
\begin{align}
\E_0\left[\tau^{[0,n)}\right] =  \int_0^n \frac{1}{{\cal I}(x)} \int_0^x \varphi(y) {\cal I}(y)dydx, 
\end{align}
where
\begin{align*}
    \phi(x) = \frac{2V\left(\lambda' - \delta' x\right)}{2\kappa' x(n-x) +\lambda' +\delta' x};\quad \varphi(x) =  \frac{2V}{2\kappa' x(n-x) +\lambda' +\delta' x}, \quad {\cal I}(x) = \exp\left\{ \int_0^x \phi(z)dz      \right\}.  
\end{align*}

In Figure \ref{fig:sensitivity_switch_all}, mean switching time is plotted against different parameters in \eqref{classicalScaling}, while fixing other parameters whose value can be found within the caption. Switching time is computed for CTMC \eqref{auto_CTMC}-\eqref{inout_CTMC} as well as CLA 2D in \eqref{Reflected_General}, and the one dimensional reflected approximation in \eqref{eq:CLA1D_V_2}, which are termed as CLA 1D in Figure \ref{fig:sensitivity_switch_all}.

As $V\rightarrow \infty$ in Figure \ref{fig:sensitivity_V_2d}, switching time of CTMC can be better approximated by CLA 2D in \eqref{Reflected_General}. Such trend can also be observed when $\kappa<1$ in figure \ref{fig:sensitivity_kappa_2d}, whereas CLA 1D in \ref{eq:CLA1D_V_2} become better approximations as $\kappa\rightarrow \infty$. Heuristically as $\kappa \rightarrow \infty$, dynamics of CTMC is dominated by autocatalytic reactions \eqref{auto}, which coincides with the motivation of CLA 1D in \eqref{eq:CLA1D_V_2} where the total mass evolves in a much slower time-scale.

\FloatBarrier

\begin{figure}[!ht]
\centering
\begin{subfigure}{0.45\textwidth}
\includegraphics[width=\linewidth, height=5cm]{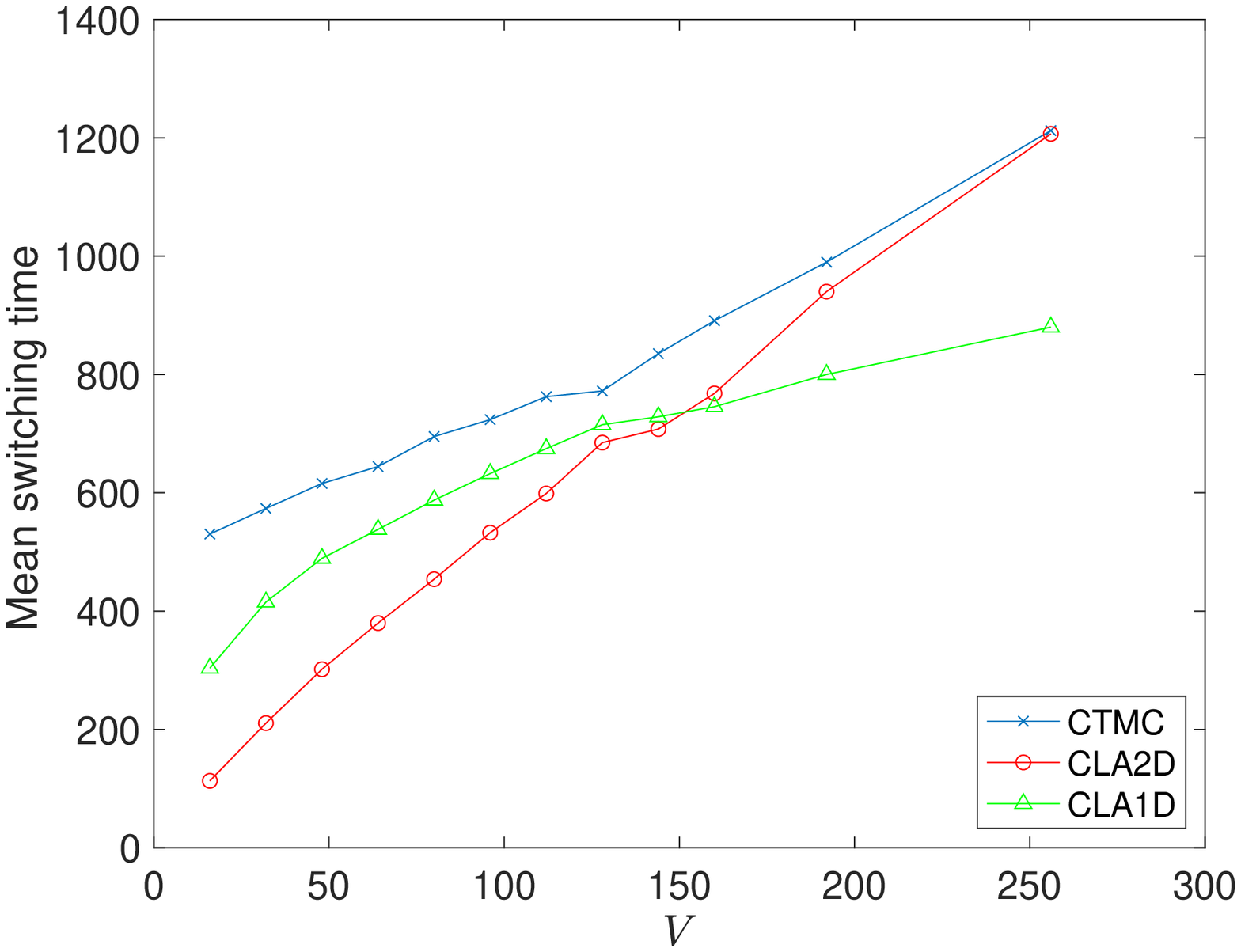}
\caption{$\kappa' = 1$ and $\lambda' = \delta' = 1/64$.}
\label{fig:sensitivity_V_2d}
\end{subfigure}
\begin{subfigure}{0.45\textwidth}
\includegraphics[width=\linewidth, height=5cm]{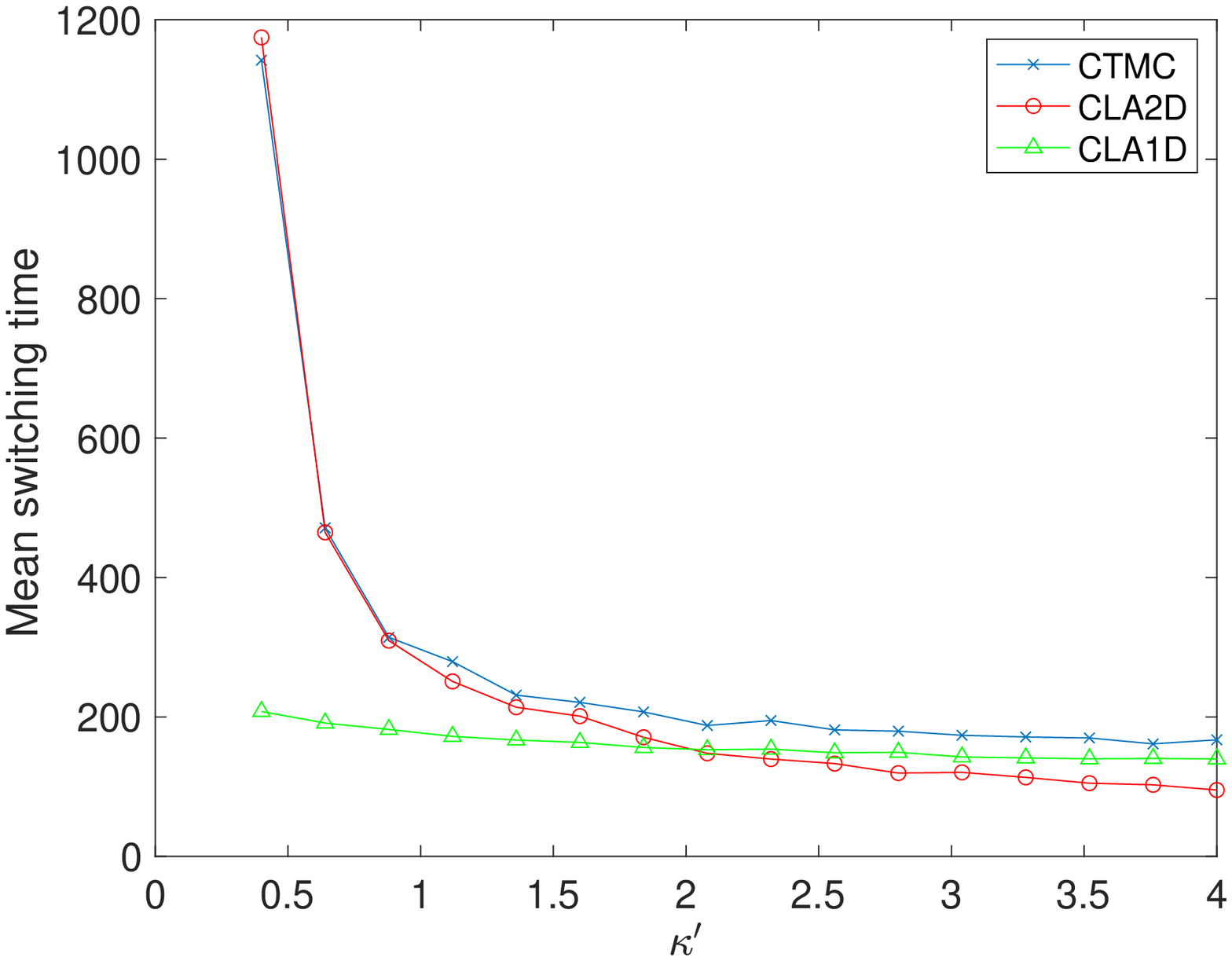}
\caption{$V = 64$ and $\lambda' = \delta' = 1/64$.}
\label{fig:sensitivity_kappa_2d}
\end{subfigure}

\begin{subfigure}{0.45\textwidth}
\includegraphics[width=\linewidth, height=5cm]{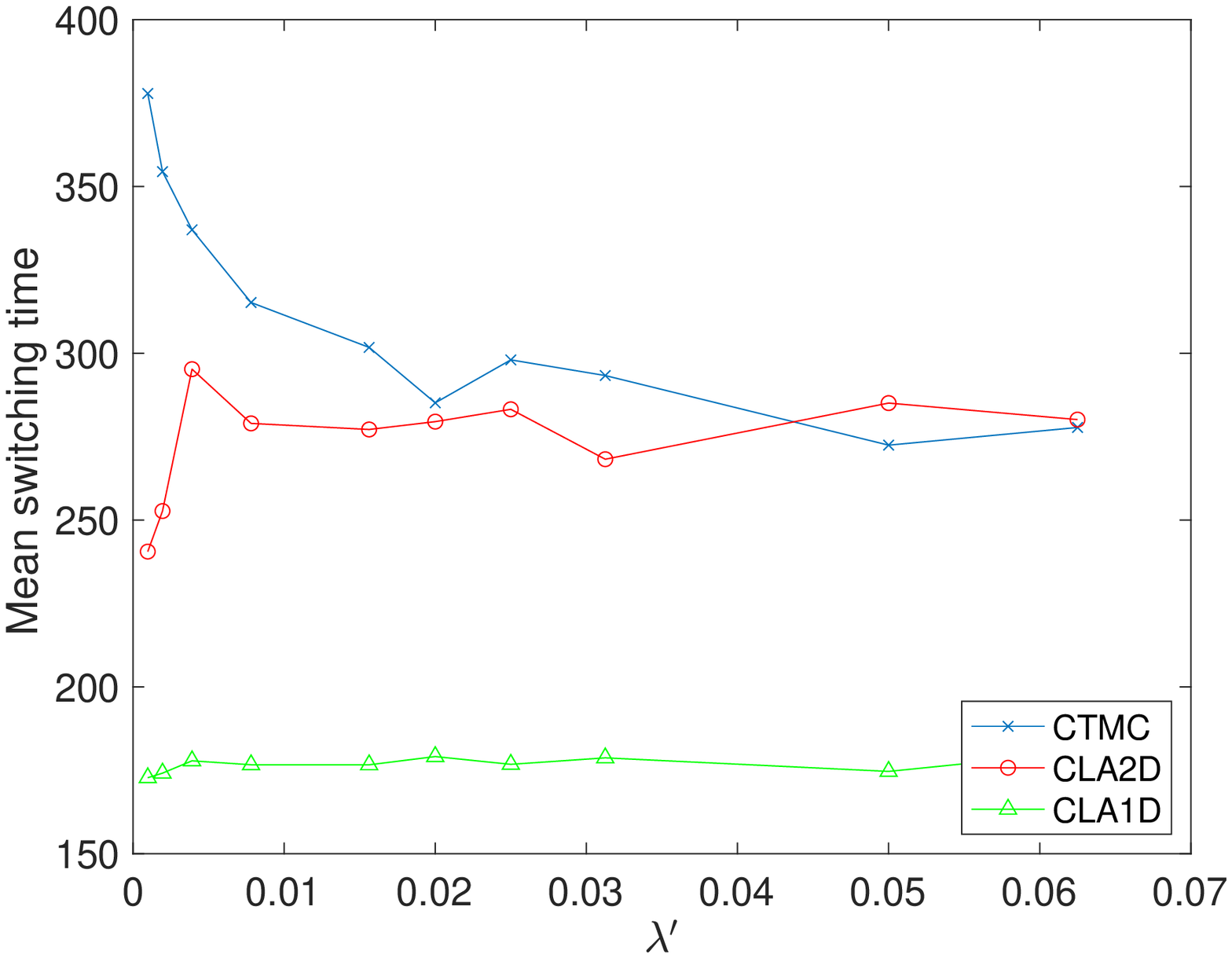}
\caption{$V = 64, \kappa' = 1$ and $\delta' = 1/64$.}
\label{fig:sensitivity_lambda}
\end{subfigure}
\begin{subfigure}{0.45\textwidth}
\includegraphics[width=\linewidth, height=5cm]{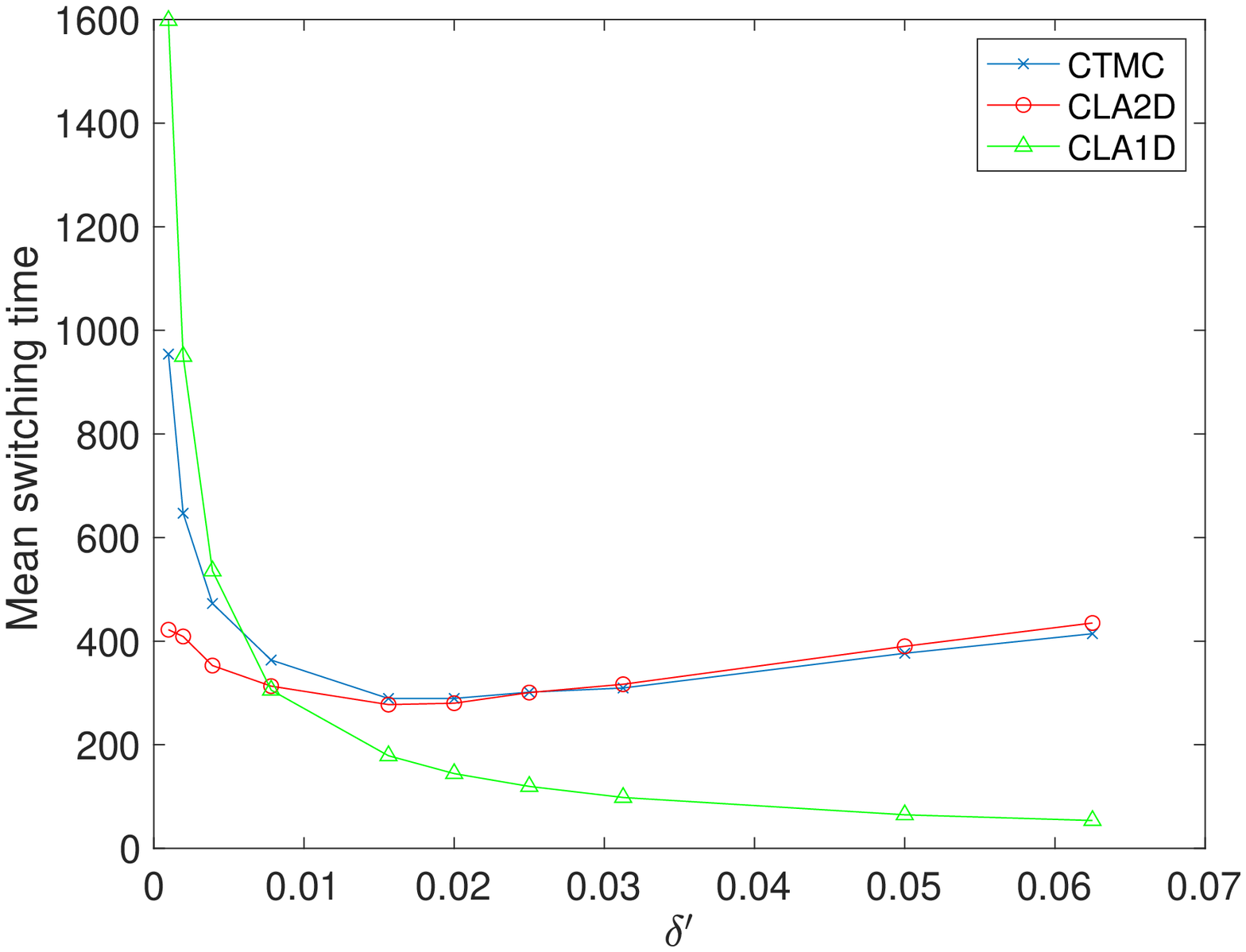}
\caption{$V = 64, \kappa' = 1$ and $\lambda' = 1/64$.}
\label{fig:sensitivity_delta}
\end{subfigure}
\caption{Mean switching time for $d=2$ is plotted against different choice of parameters in \eqref{classicalScaling} for CTMC (\eqref{auto_CTMC}-\eqref{inout_CTMC}), 2D CLA \eqref{Reflected_General} as well as 1D CLA \eqref{eq:CLA1D_V_2}. 
Throughout all simulations, the initial condition is chosen as $(X^1_0,X^2_0) = (0,2V)$. The \textbf{switching time} for each trajectory is then defined to as the first time $X^2$ becomes 0. Mean switching time is then computed via averaging over 1000 trajectories.
}
\label{fig:sensitivity_switch_all}
\end{figure}



\FloatBarrier

We compare the switching time distributions between the CTMC and the CLA via the histogram in Figure \ref{fig:hist_stoch_CLA}. Parameters of the simulation are given by  $V= 64, \kappa'= 1$ in \eqref{classicalScaling} for all four figures.  In addition, in Figures \ref{fig:2d_uniform_switch_stoch}-\ref{fig:2d_uniform_switch_CLA2d}, $\lambda' = \delta' = 1/32$, as the CTMC in \eqref{auto_CTMC}-\eqref{inout_CTMC} possess uniform distribution when conditioned on the level sets; in Figures \ref{fig:2d_bimodal_switch_stoch}-\ref{fig:2d_bimodal_switch_CLA2d},   $\lambda' = \delta' = 1/64$, as the CTMC in \eqref{auto_CTMC}-\eqref{inout_CTMC} possess bimodal stationary distribution as mass is concentrated on the meta-stable states near the boundary. In both cases, mean and variance of switching time are quite close, and the shape of the distributions is nicely recovered. 

\begin{figure}[!htbp]
\centering
\begin{subfigure}{0.45\textwidth}
\includegraphics[width=\linewidth, height=5cm]{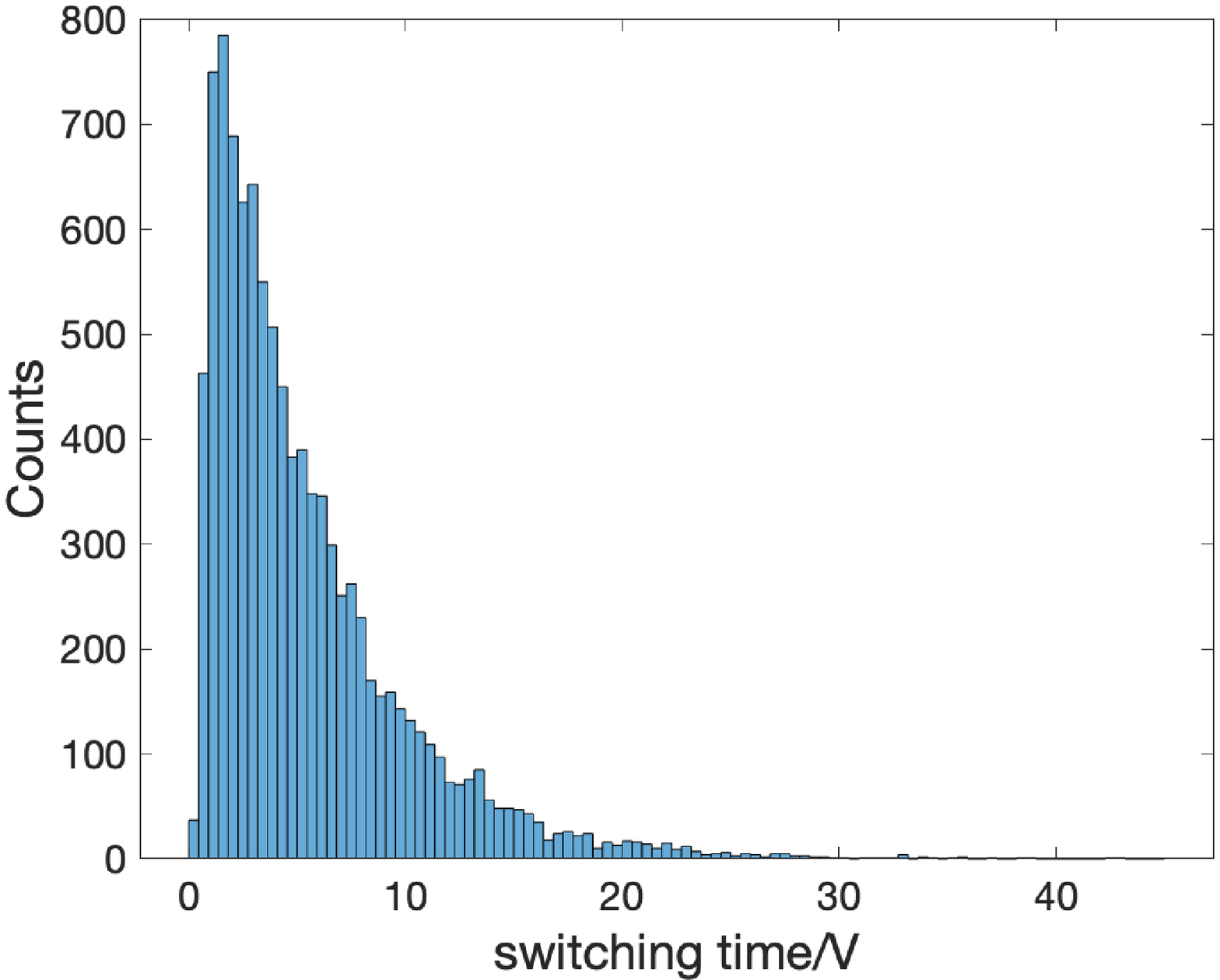}
\caption{Mean = 5.42, variance = 21.54.}
\label{fig:2d_uniform_switch_stoch}
\end{subfigure}
\begin{subfigure}{0.45\textwidth}
\includegraphics[width=\linewidth, height=5cm]{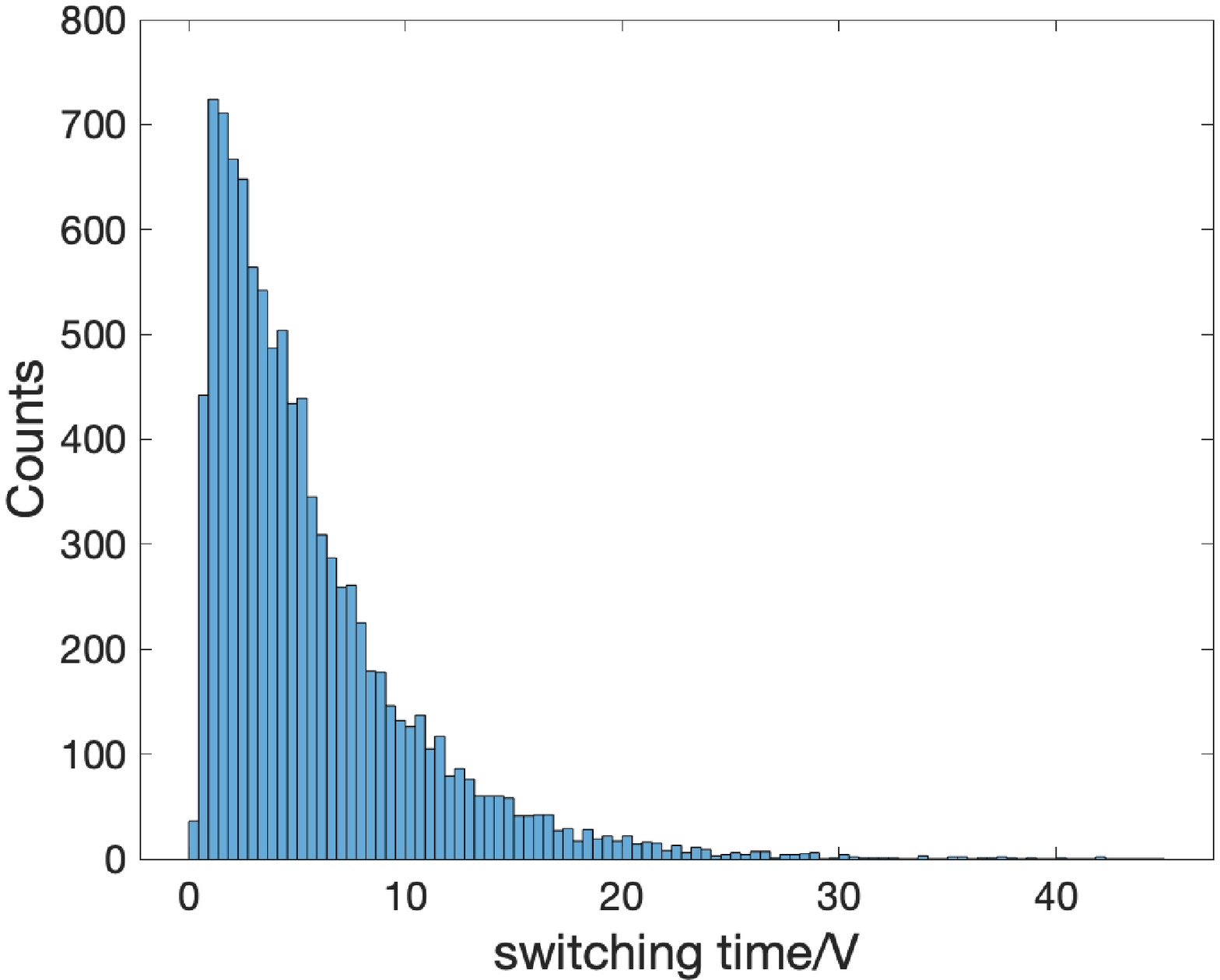}
\caption{Mean = 5.62, variance = 23.33.}
\label{fig:2d_uniform_switch_CLA2d}
\end{subfigure}

\begin{subfigure}{0.45\textwidth}
\includegraphics[width=\linewidth, height=5cm]{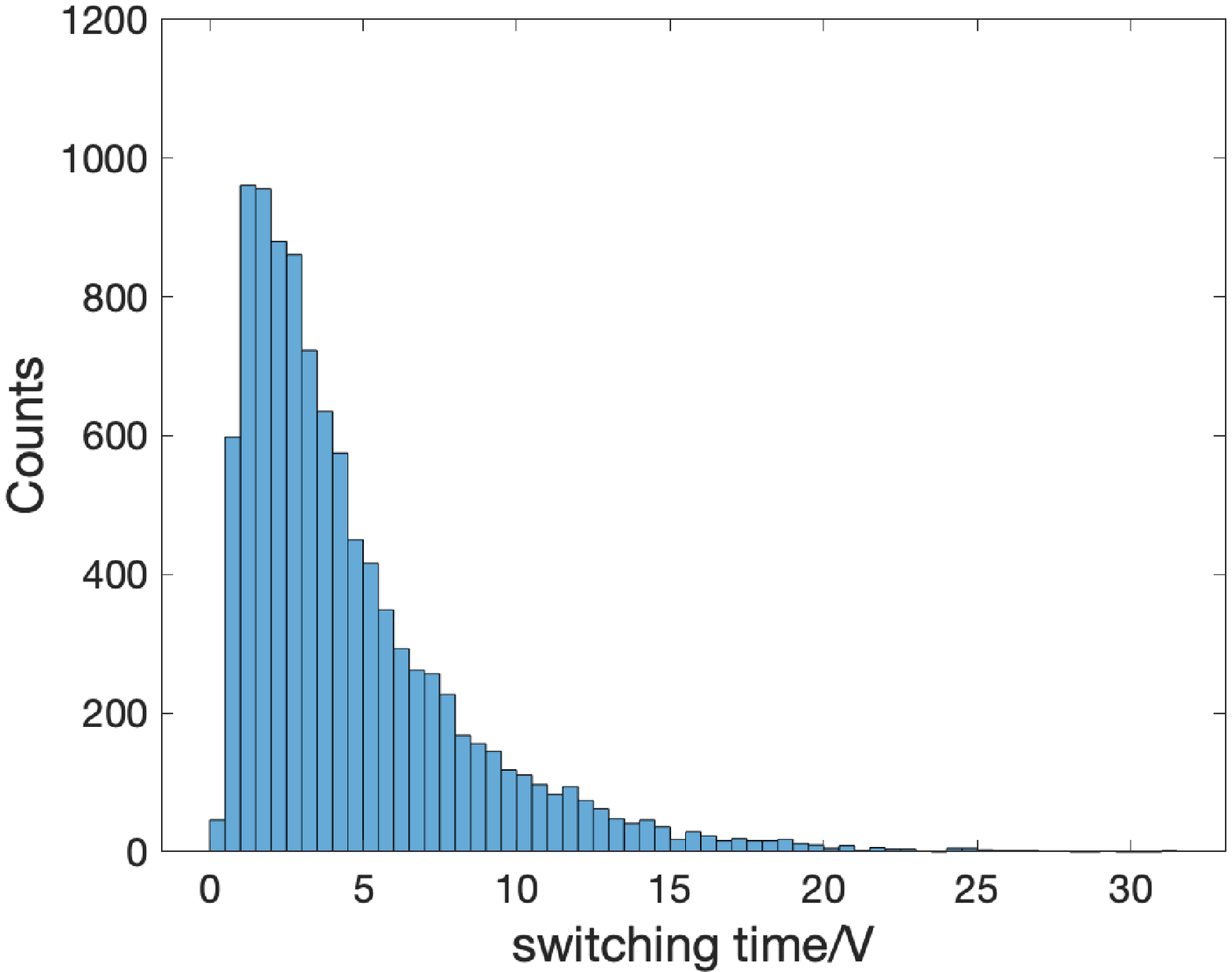}
\caption{Mean = 4.63, variance = 14.31.}
\label{fig:2d_bimodal_switch_stoch}
\end{subfigure}
\begin{subfigure}{0.45\textwidth}
\includegraphics[width=\linewidth, height=5cm]{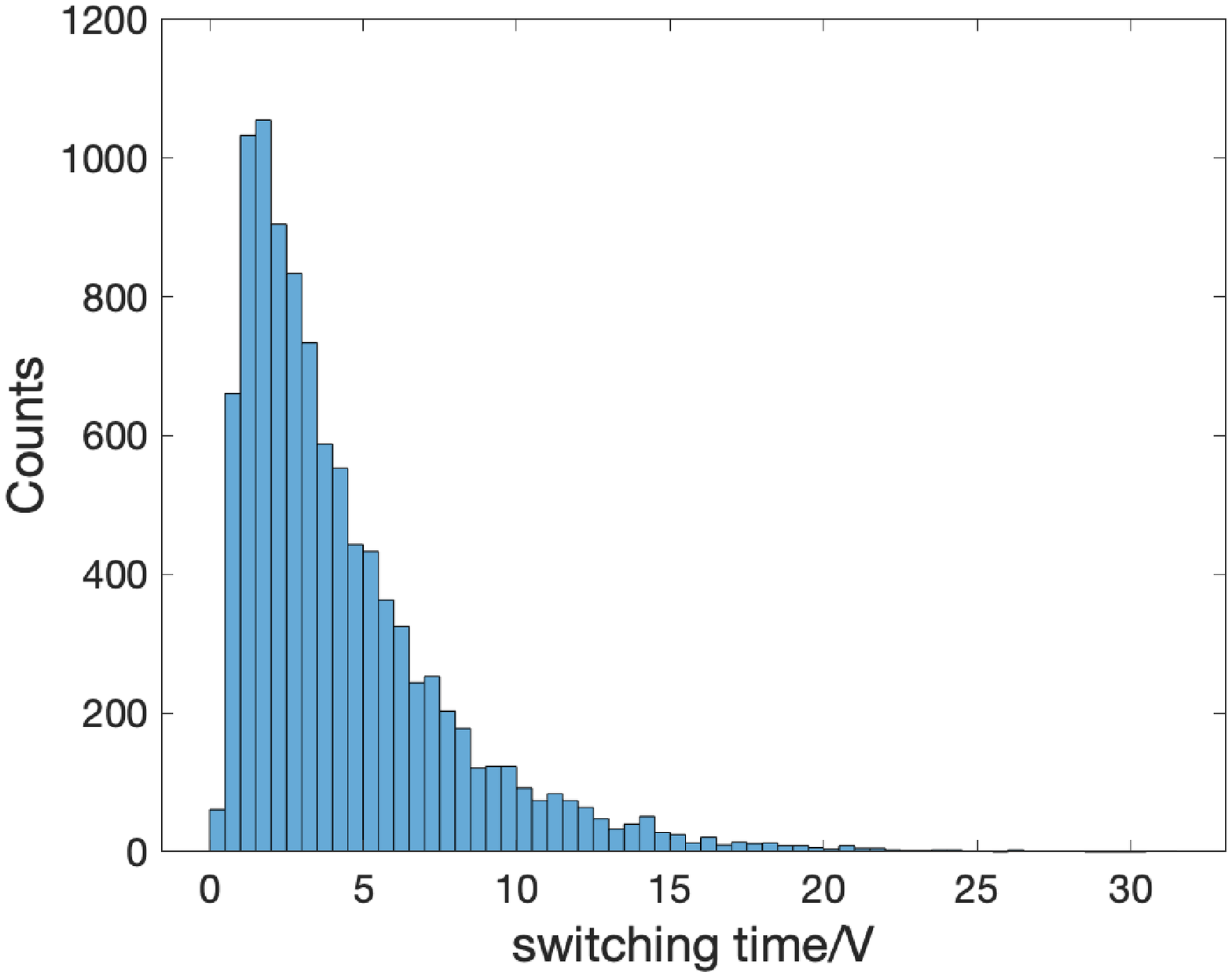}
\caption{Mean = 4.37, variance = 12.76.}
\label{fig:2d_bimodal_switch_CLA2d}
\end{subfigure}
\caption{Histogram of switching times for the 2D CTMC \eqref{auto_CTMC}-\eqref{inout_CTMC} and the 2D CLA \eqref{Reflected_General} are plotted, in left and right column respectively, for 1000 trajectories. Parameters of the simulation are given by  $V= 64,  \kappa'= 1$ in \eqref{classicalScaling} for all four figures.  In addition, in Figures \ref{fig:2d_uniform_switch_stoch}-\ref{fig:2d_uniform_switch_CLA2d}, $\lambda' = \delta' = 1/32$;  in Figures \ref{fig:2d_bimodal_switch_stoch}-\ref{fig:2d_bimodal_switch_CLA2d},   $\lambda' = \delta' = 1/64$. 
}
\label{fig:hist_stoch_CLA}
\end{figure}

However, as $\lambda'$ or $\delta'\rightarrow 0$, the switching time estimates using CLA 2D \eqref{Reflected_General} are no longer close to the CTMC, as shown in Figure \ref{fig:sensitivity_lambda} and \ref{fig:sensitivity_delta}. In figure \ref{fig:2d_extbimodal_switch}, such cases were investigated for $V= 64, \lambda' = \delta' = 1/256, \kappa'= 1$, where switching time distribution of CTMC \eqref{auto_CTMC}-\eqref{inout_CTMC} is approximated by the CLA 1D \eqref{eq:CLA1D_V_2}. This approximation yields much better result comparing to CLA 2D in \eqref{Reflected_General}, however, the approximation via  CLA 1D \eqref{eq:CLA1D_V_2} still underestimates the mean and variance of switching time for the exact CTMC model.

\begin{figure}
\centering
\begin{subfigure}{0.45\textwidth}
\includegraphics[width=\linewidth, height=5cm]{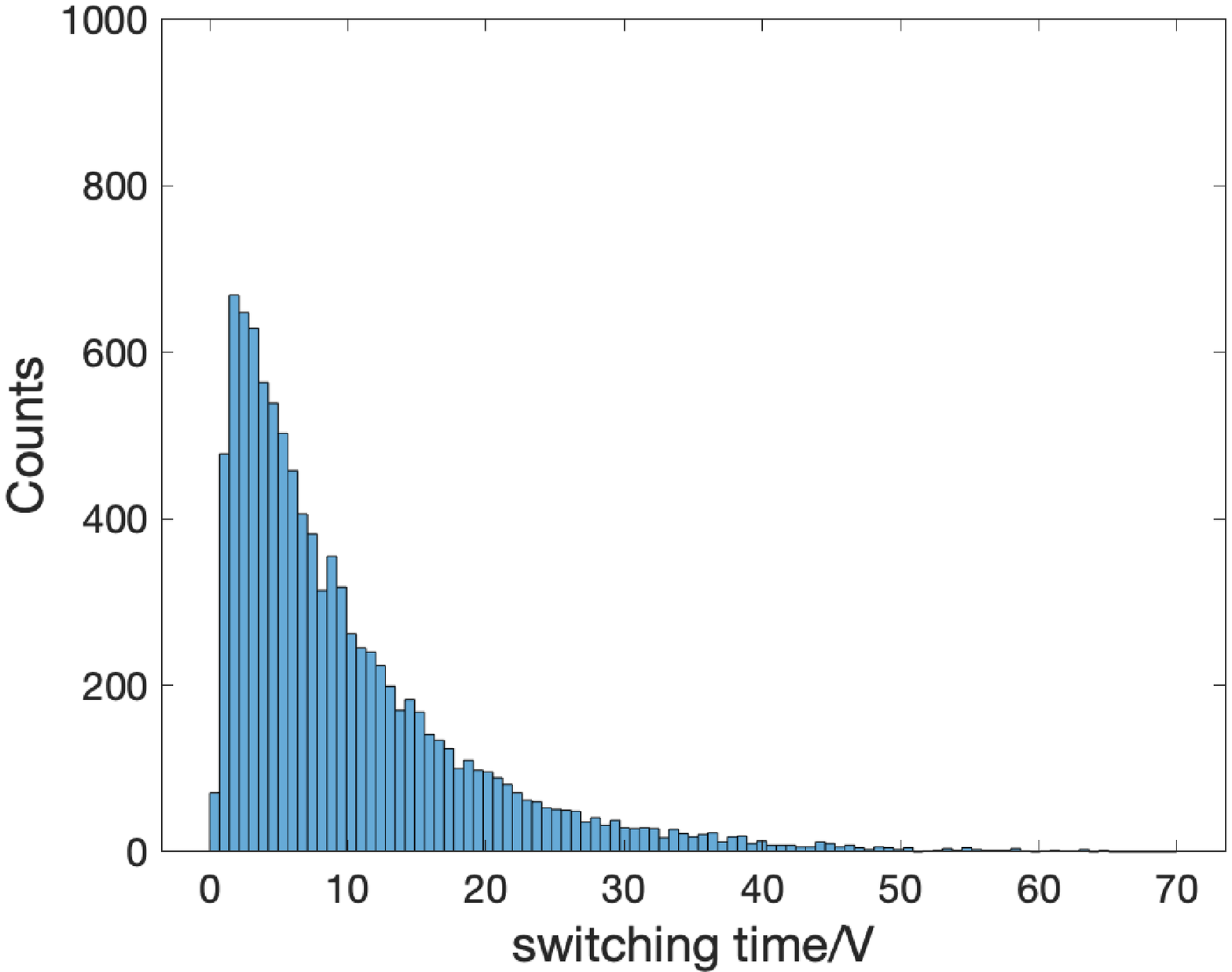}
\caption{Mean = 10.05, variance = 86.79.}
\label{fig:2d_extbimodal_switch_stoch}
\end{subfigure}
\begin{subfigure}{0.45\textwidth}
\includegraphics[width=\linewidth, height=5cm]{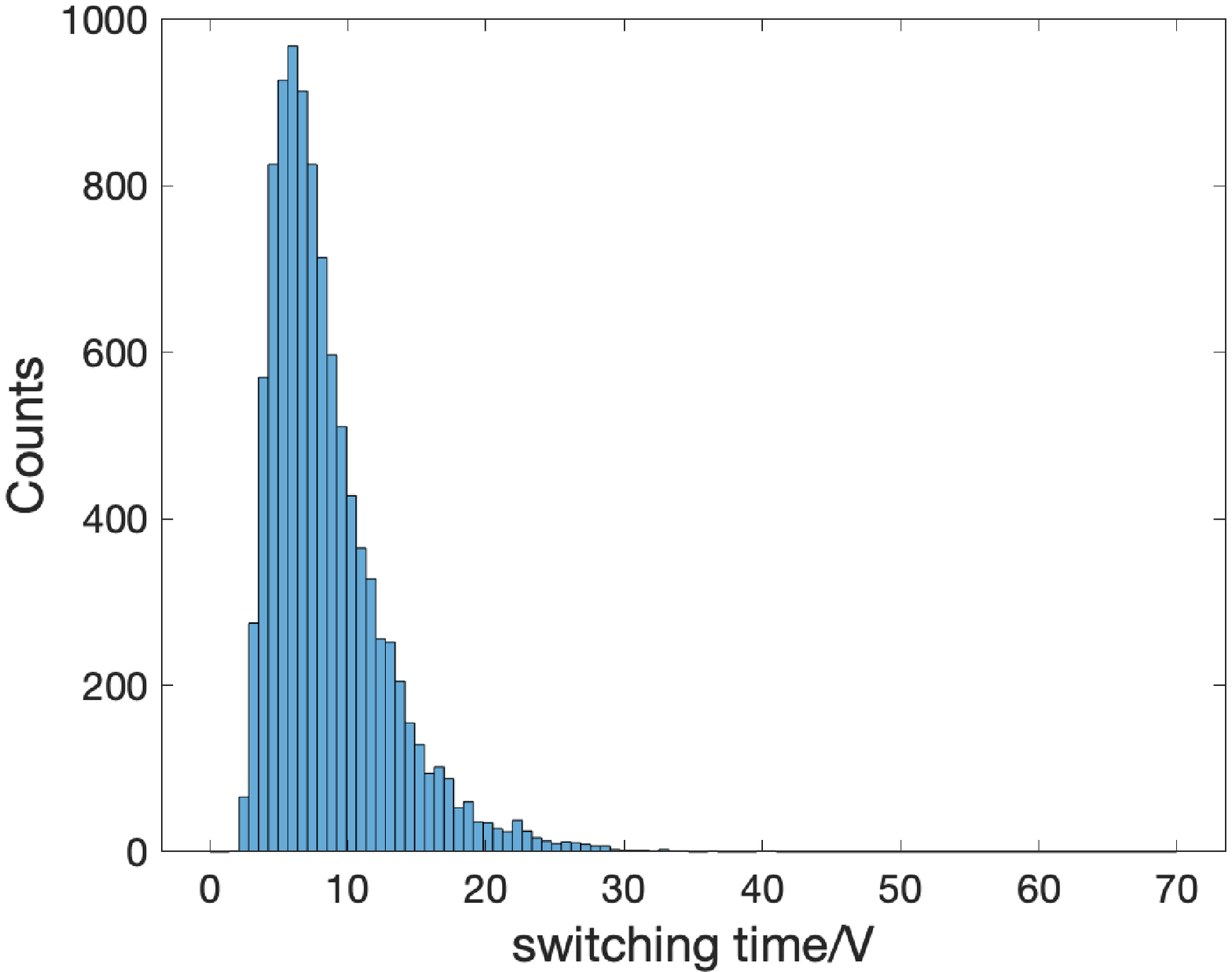}
\caption{Mean = 8.52, variance = 18.39.}
\label{fig:2d_extbimodal_switch_CLA1D}
\end{subfigure}
\caption{Histogram of switching times for the 2D CTMC \eqref{auto_CTMC}-\eqref{inout_CTMC} and the CLA 1D \eqref{eq:CLA1D_V_2} are plotted, in left and right column respectively, for 1000 trajectories. Parameters for the simulation are given by $V= 64, \lambda' = \delta' = 1/256, \kappa'= 1$ in \eqref{classicalScaling}. 
}
\label{fig:2d_extbimodal_switch}
\end{figure}

\FloatBarrier

\subsection{Simulation result for the 3-dimensional TK model }
\label{sec:3DTK}

In this section, we first give an explanation on discrepancy between finite time dynamics of 2-dimensional and 3-dimensional TK models. Then we show via simulation that CLA in \eqref{Reflected_General} recovers the stationary distribution of the associated CTMC in \eqref{auto_CTMC}-\eqref{inout_CTMC}. Furthermore we propose cycling time, which is analogous to switching time when $d=2$, and the distribution of cycling time of the CTMC can be well approximated via its associated CLA when $V$ is large. 

The main contrast between $d=2$ and $d=3$ for the CTMC, as observed in Figure \ref{fig:finite_traj}, is that the transition time between meta-stable states for $d=2$ is much longer than that for $d=3$. A heuristic explanation to this is because for 2-dimensional TK model, the two autocatalytic reactions \eqref{auto} move in opposite directions while sharing the same intensity, hence making it difficult for the process to move between meta-stable states; whereas for higher dimensional TK models ($d\geq 3$), autocatalytic reactions move the process in a cyclic direction, namely $A_1\rightarrow A_2 \rightarrow \cdots \rightarrow A_d \rightarrow A_1$. More specifically when $A_1$ is abundant, dynamics of $A_1$ is mainly driven by autocatalytic reactions $A_d+A_1\rightarrow 2A_1$ and  $A_1+A_2\rightarrow 2A_2$. However firing of both reactions tend to decrease the rate of $A_d+A_1\rightarrow 2A_1$ and increase the rate of $A_1+A_2\rightarrow 2A_2$, leading to an imbalance toward gaining of $A_2$, hence most $A_1$ will be changed into $A_2$.

Similar to 2-dimensional TK models, positive recurrence are established for $d=3$ in \cite{bibbona2020stationary}, however explicit form of stationary distribution is only derived when $\delta = \frac{3}{2}\kappa$. In figure \ref{fig:stationary_3d}, stationary distribution of the CTMC in \eqref{auto_CTMC}-\eqref{inout_CTMC} is plotted on the hyperplane $\{ x+y+z=3V\}$ for $\delta > \frac{3}{2}\kappa$, $\delta = \frac{3}{2}\kappa$ and $\delta < \frac{3}{2}\kappa$ in row 1. Similar multi-modality of stationary distributions are observed when $\delta < \frac{3}{2}\kappa$. In row 2 of figure \ref{fig:stationary_3d}, stationary distribution of the associated CLA in \eqref{Reflected_General} is plotted via densities near the hyperplane, namely $\left\{ |x+y+z-3|\leq \frac{1}{128}\right\}$. In all three cases, the CLA captures the stationary behavior of the exact CTMC model. 

\begin{figure}[!ht]
\centering
\begin{subfigure}{0.3\textwidth}
\includegraphics[width=\linewidth, height=3.5cm]{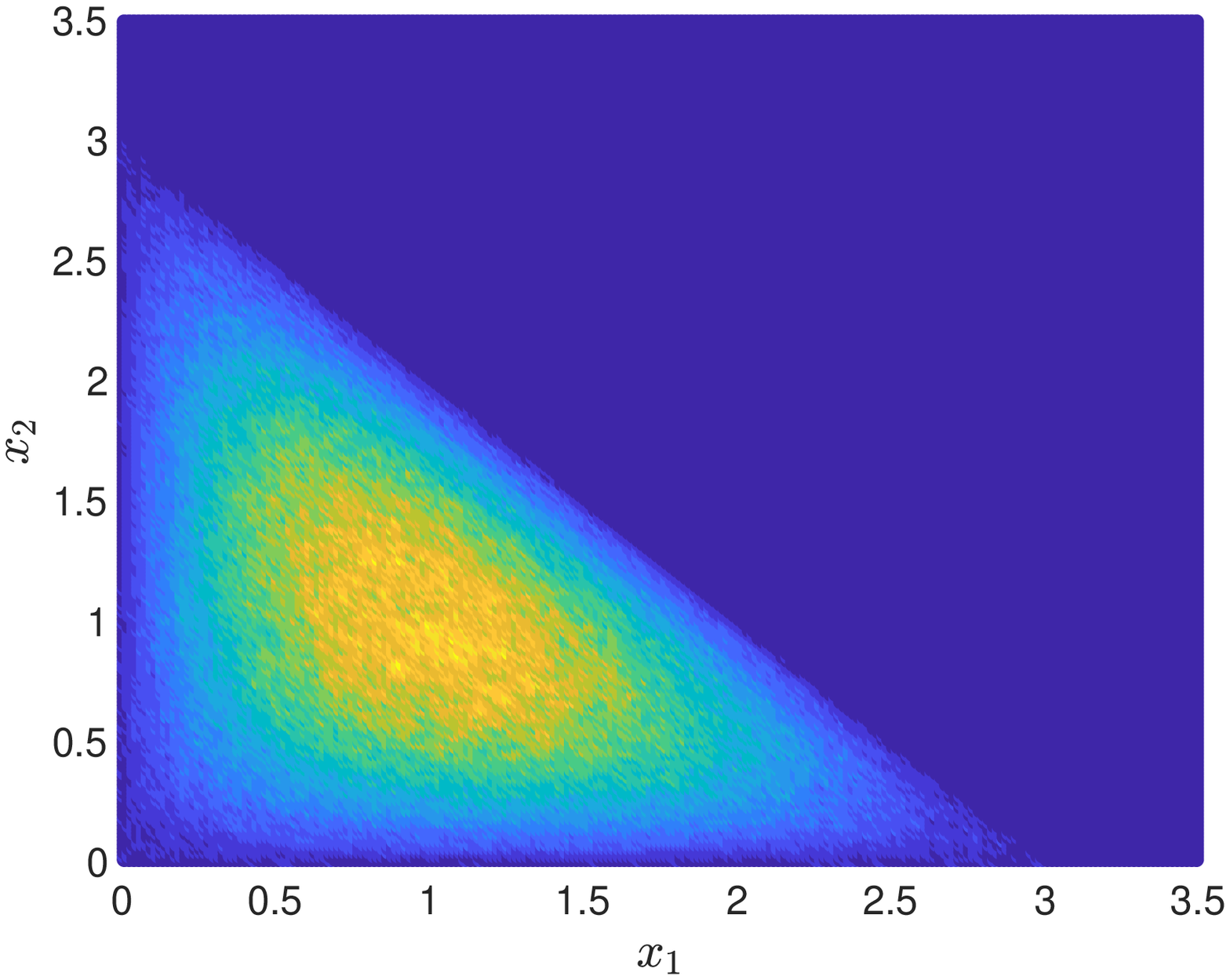}
\end{subfigure}
\begin{subfigure}{0.3\textwidth}
\includegraphics[width=\linewidth, height=3.5cm]{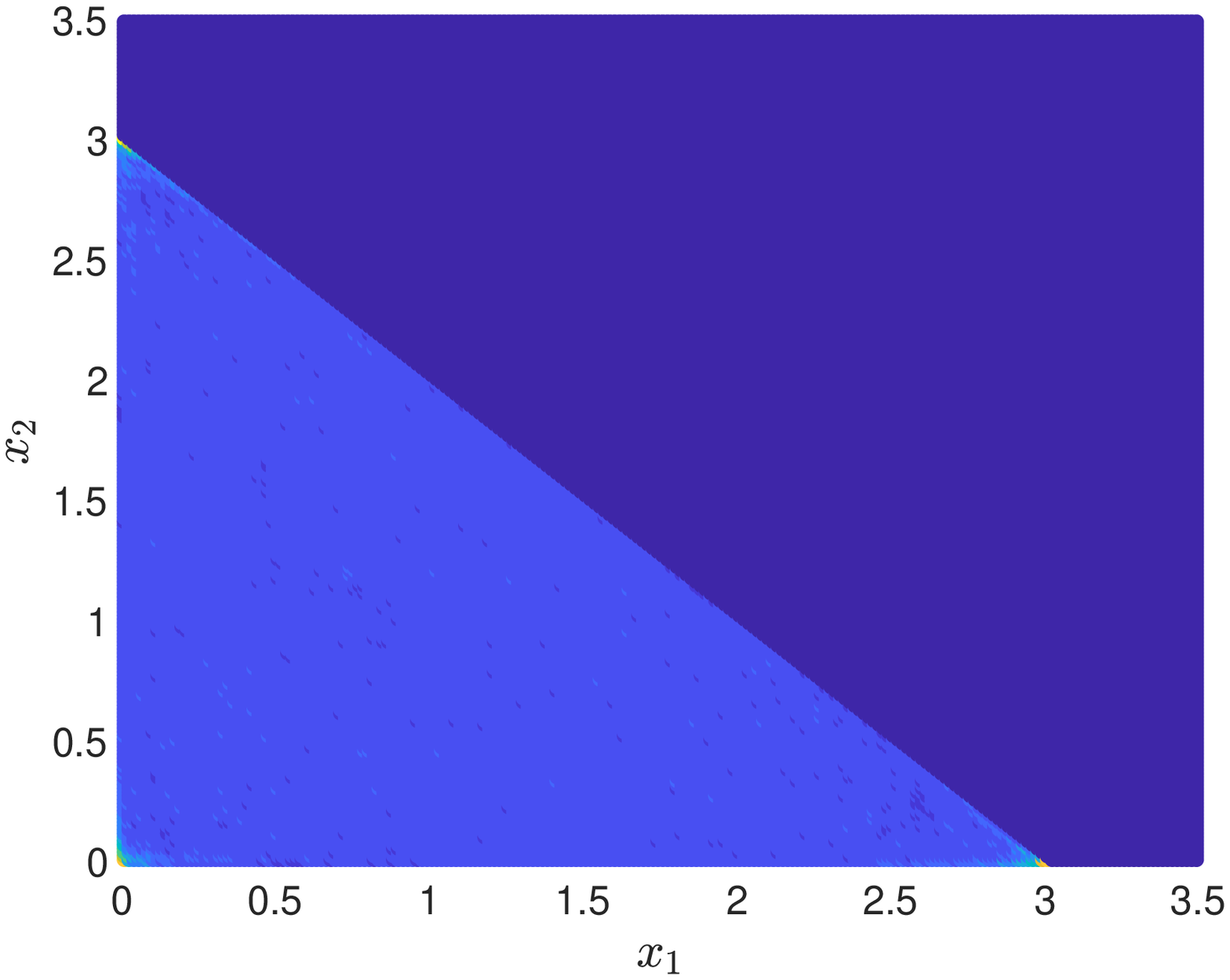}
\end{subfigure}
\begin{subfigure}{0.3\textwidth}
\includegraphics[width=\linewidth, height=3.5cm]{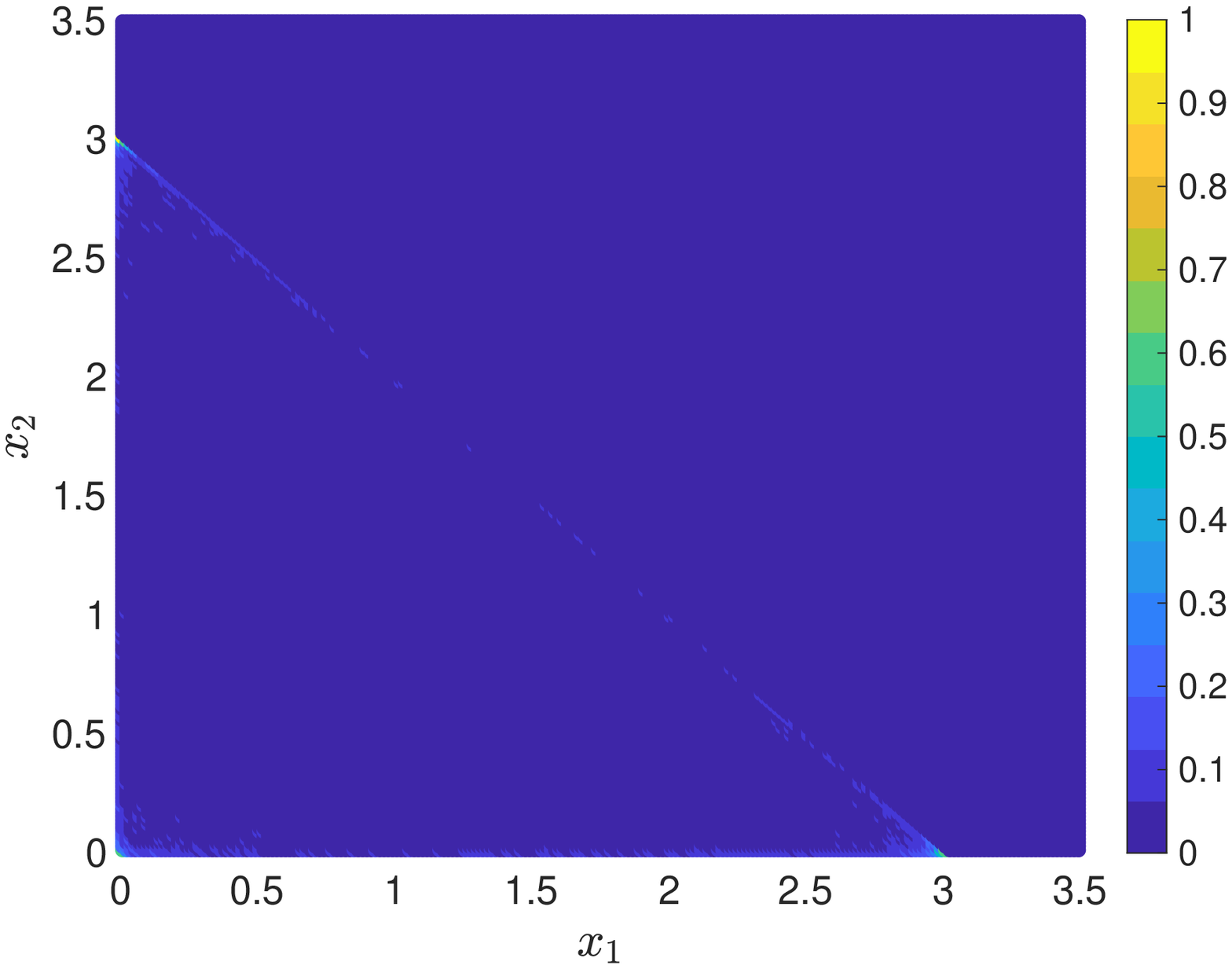}
\end{subfigure}

\begin{subfigure}{0.3\textwidth}
\includegraphics[width=\linewidth, height=3.5cm]{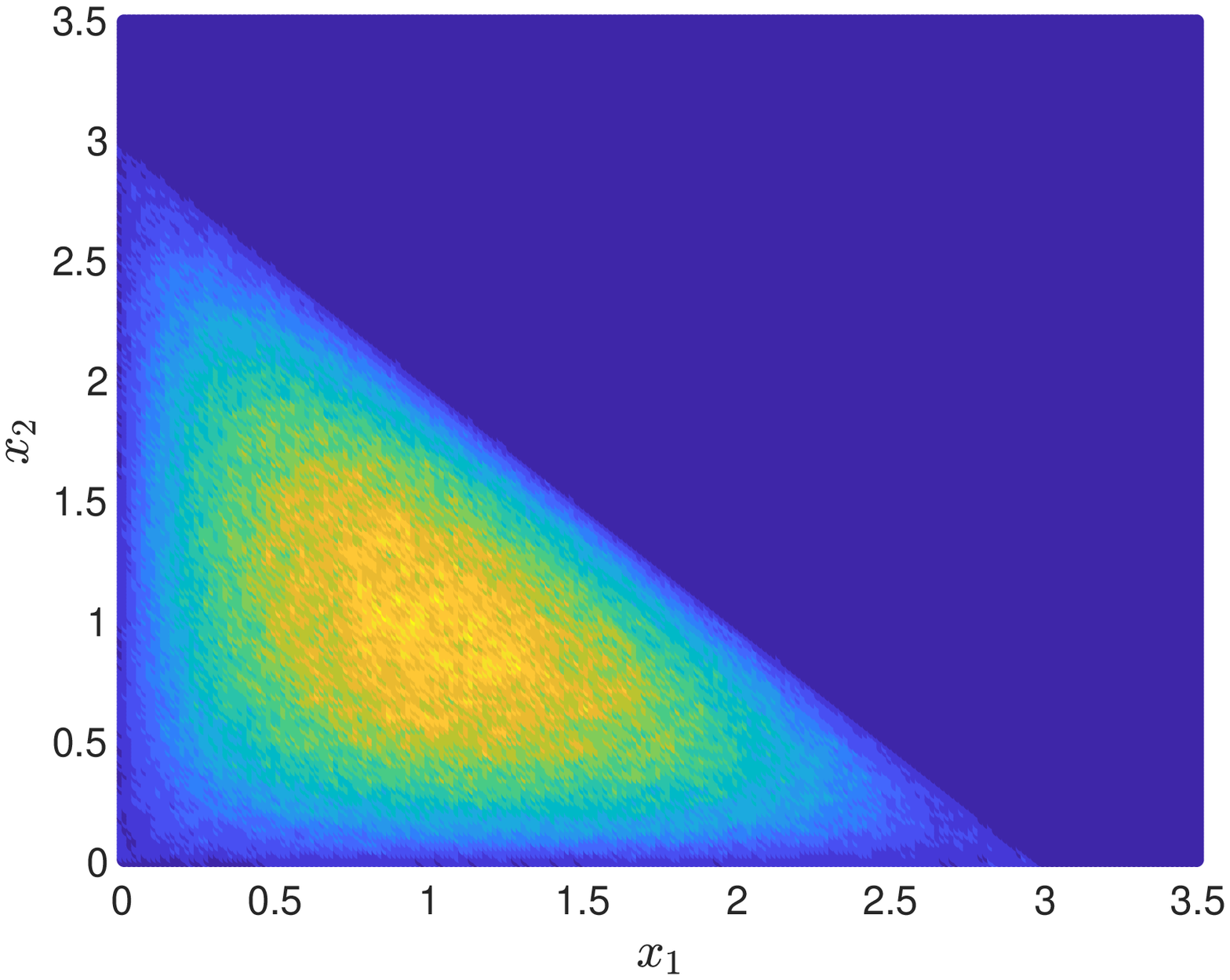}
\end{subfigure}
\begin{subfigure}{0.3\textwidth}
\includegraphics[width=\linewidth, height=3.5cm]{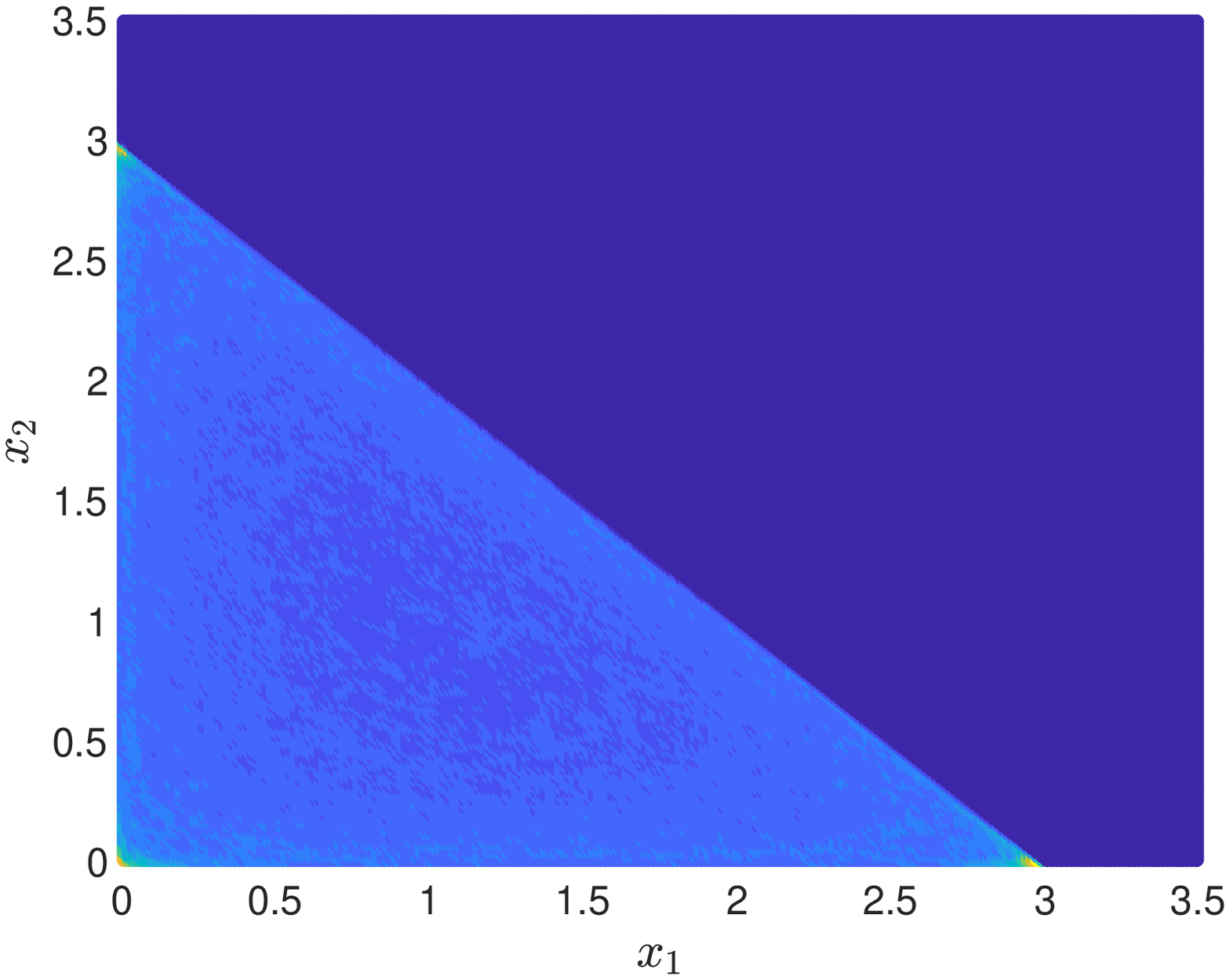}
\end{subfigure}
\begin{subfigure}{0.3\textwidth}
\includegraphics[width=\linewidth, height=3.5cm]{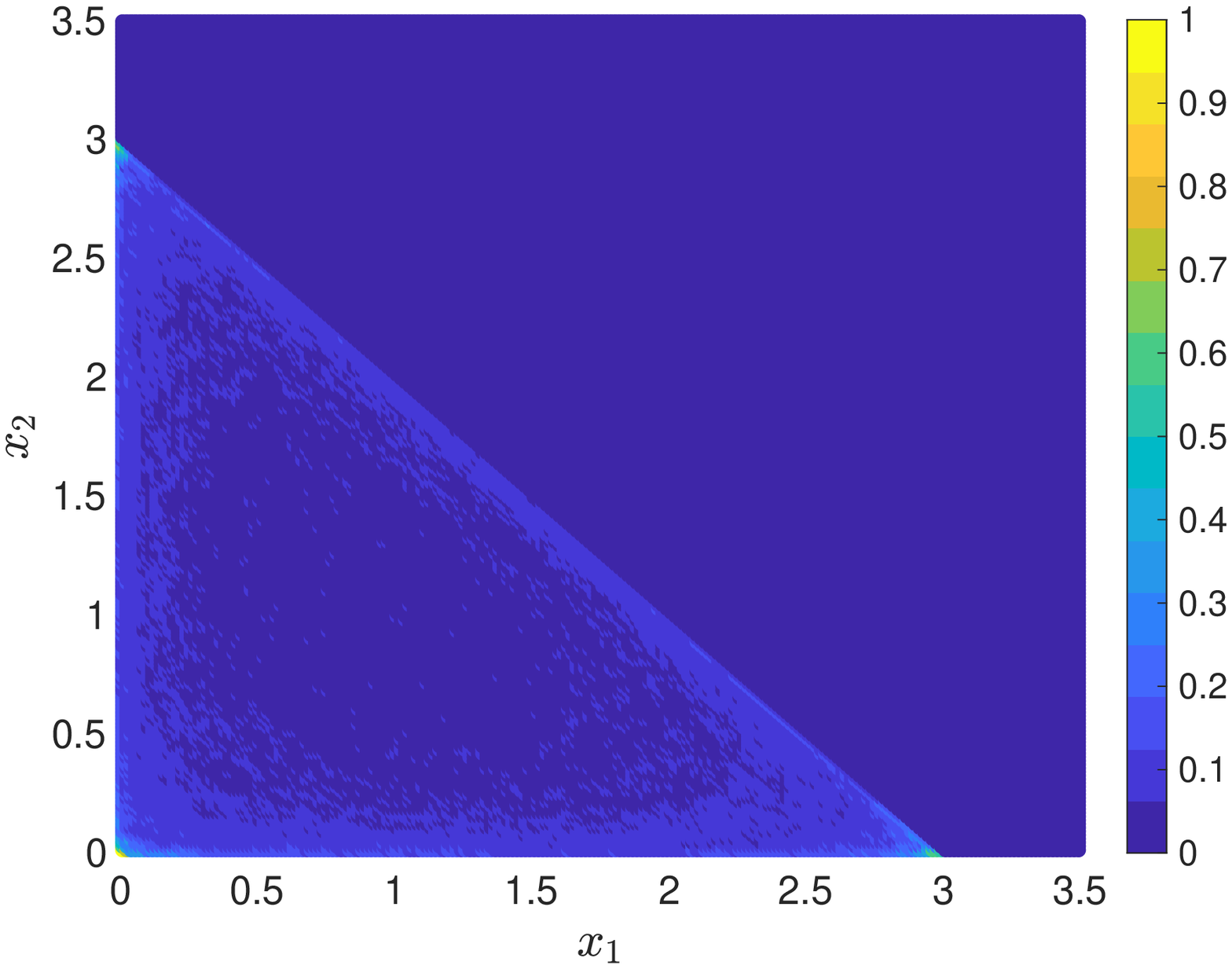}
\end{subfigure}
\caption{Stationary distribution of the CTMC in \eqref{auto_CTMC}-\eqref{inout_CTMC} conditioning on $\{x+y+z = 3V\}$, and CLA in \eqref{Reflected_General} conditioning on $\{|x+y+z-3|\leq 1/128\}$ with dimension $d=3$. All stationary quantities in Figure \ref{fig:stationary_3d} are obtained via time-averaging over long time trajectory. The trajectory is simulated until $T = 10^6$ with parameter $V = 64, \kappa'= 1$, and the value of $D$ is given by $3/64, 3/128, 1/64$ from left to right respectively. }
\label{fig:stationary_3d}
\end{figure}

As discussed in the different transition time between meta-stable states for $d=2$ and $d=3$, the dominant species when $d=3$ form a cycle, $A_1\rightarrow A_2\rightarrow A_3\rightarrow A_1$. Note that such cyclic behavior is not exclusive to parameter regimes when stationary distributions are concentrated near the the boundaries. In figure \ref{fig:3d_finitetraj_nontri}, trajectories of 3-dimensional TK model is plotted for $V = 256, \kappa'= 1$, and $D = \lambda' = \delta'$ is given by 1/32 and 3/512 respectively. The stationary distribution is unimodal or uniform when conditioning on the hyperplane $\{x+y+z = 3V\}$ respectively. In both cases, cyclic behavior still persists within the trajectory, hence we propose \textbf{cycling time} as the analogous quantity in describing finite time dynamics for $d\geq 3$, which can be defined as the second time $X^3$ reaches peak abundance when initial condition is given by  $(X^1_0,X^2_0,X^3_0) = (0,0,3V)$. Note that the switching time is also well defined in 3-dimensional TK models, however the switching could end up in different regions, i.e. if initially there is only species $A_3$, it could move to the boundary with only species $A_1$ as well as the boundary with only species $A_2$. In comparison cycling time is more straightforward and it consistently captures the average behavior of 3-dimensional TK model.

\begin{figure}[!ht]
\centering
\includegraphics[width=0.6\linewidth, height=6cm]{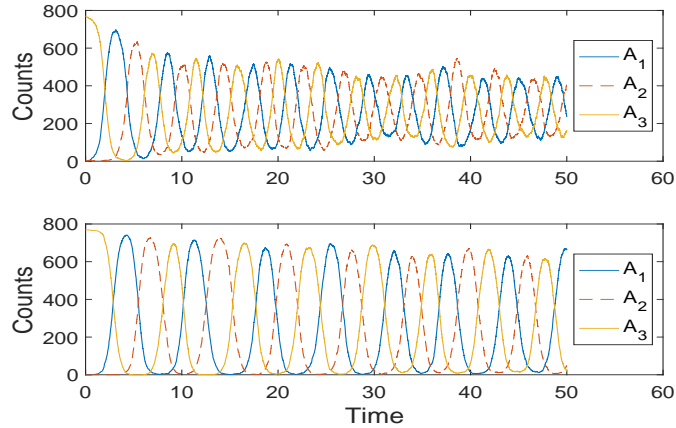}
\caption{Finite time trajectory of 3-dimensional TK model is plotted for $V = 256, \kappa'= 1$ and $D = \lambda' = \delta'$ is given by 1/32 and 3/512 respectively.}
\label{fig:3d_finitetraj_nontri}
\end{figure}

We will simulate and compare cycling time distribution for the CTMC in \eqref{auto_CTMC}-\eqref{inout_CTMC} and its associated CLA in \eqref{Reflected_General}. In figure \ref{fig:sensitivity_switch_all_3d}, mean switching time is plotted against different parameters in \eqref{classicalScaling}, while fixing other parameters whose value can be found within the caption. Cycling time is computed for CTMC as well as CLA 3D in \eqref{Reflected_General}.  Similar to 2D results, mean cycling time is better captured when $V\rightarrow \infty$, or when $\kappa\rightarrow 0$. 

\begin{figure}[!ht]
\centering
\begin{subfigure}{0.45\textwidth}
\includegraphics[width=\linewidth, height=5cm]{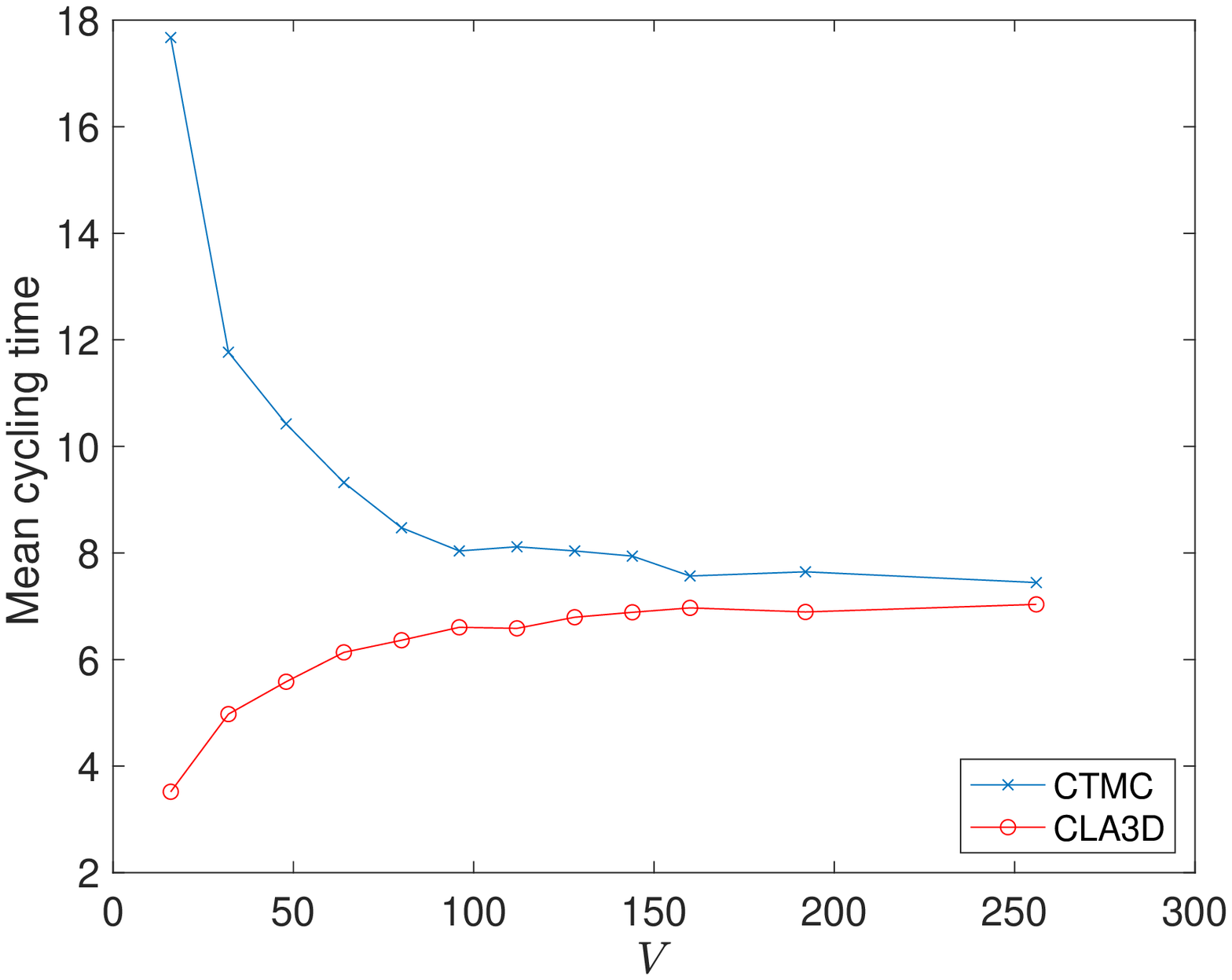}
\caption{$\kappa' = 1$ and $\lambda' = \delta' = 1/64$.}
\label{fig:sensitivity_V_3d}
\end{subfigure}
\begin{subfigure}{0.45\textwidth}
\includegraphics[width=\linewidth, height=5cm]{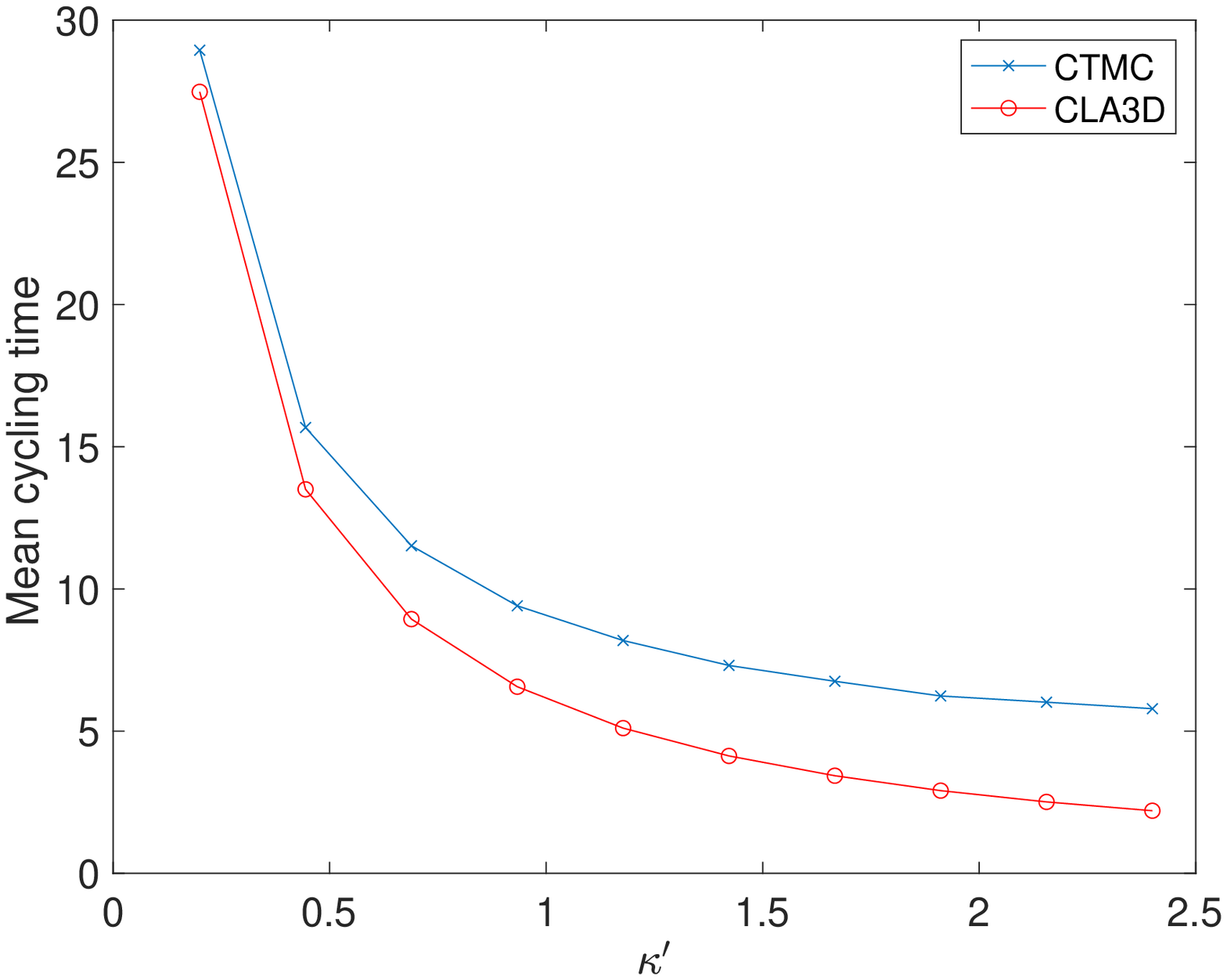}
\caption{$V = 64$ and $\lambda' = \delta' = 1/64$.}
\label{fig:sensitivity_kappa_3d}
\end{subfigure}

\begin{subfigure}{0.45\textwidth}
\includegraphics[width=\linewidth, height=5cm]{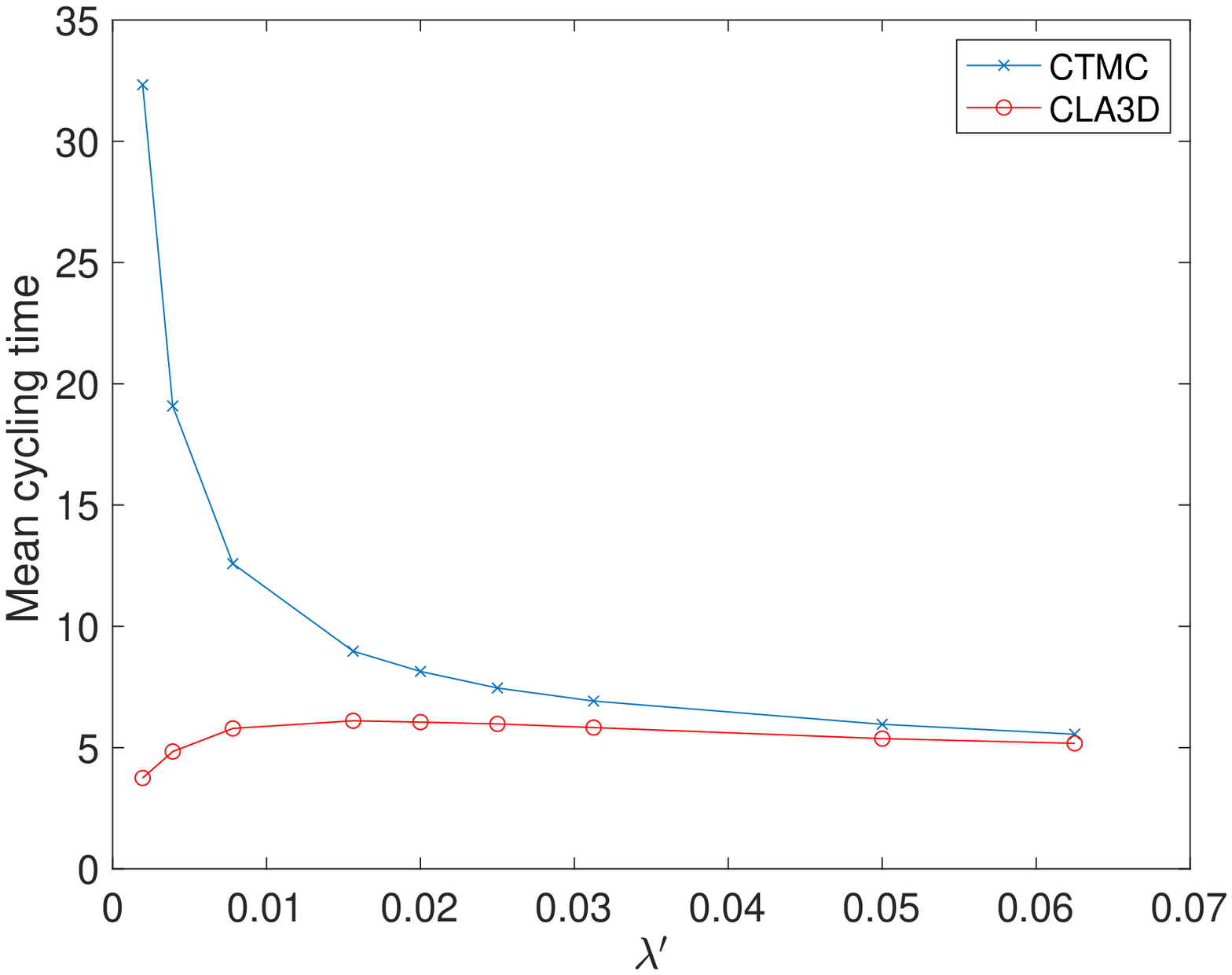}
\caption{$V = 64, \kappa' = 1$ and $\delta' = 1/64$.}
\label{fig:sensitivity_lambda_3d}
\end{subfigure}
\begin{subfigure}{0.45\textwidth}
\includegraphics[width=\linewidth, height=5cm]{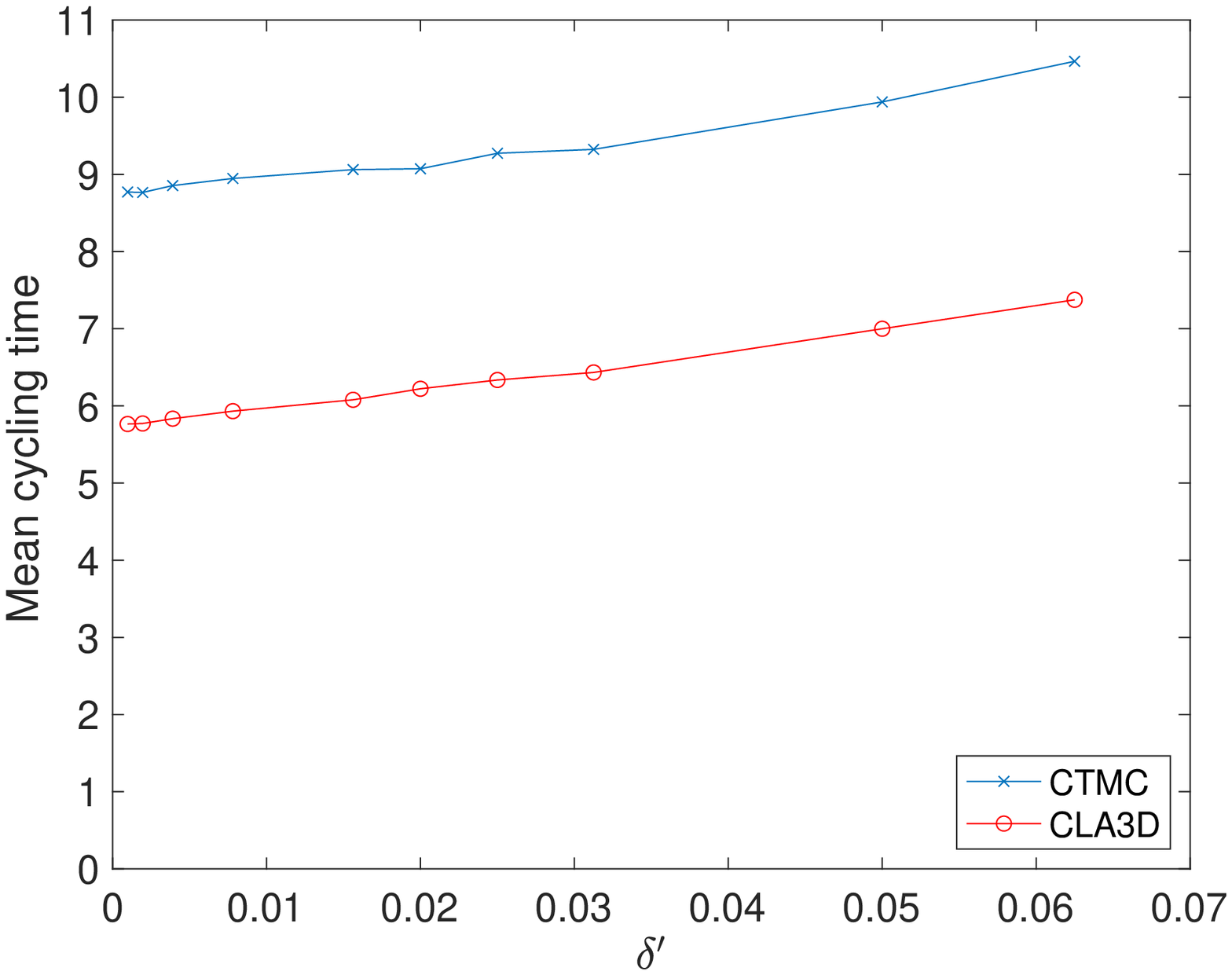}
\caption{$V = 64, \kappa' = 1$ and $\lambda' = 1/64$.}
\label{fig:sensitivity_delta_3d}
\end{subfigure}
\caption{Mean cycling time is plotted for the CTMC \eqref{auto_CTMC}-\eqref{inout_CTMC} and the CLA \eqref{Reflected_General} in dimension $d=3$ are plotted against different choice of parameters. 
Throughout all simulations, the initial condition is chosen as $(X^1_0,X^2_0,X^3_0) = (0,0,3V)$. The cycling time for each trajectory is then obtained as the first time $X^3$ reaches peak abundance. Mean switching time is then computed via averaging over 1000 trajectories. 
}
\label{fig:sensitivity_switch_all_3d}
\end{figure}

In figure \ref{fig:3d_hist_switch}, cycling time distribution for the CTMC is compared with its associated CLA \eqref{Reflected_General}. Simulation of cycling time in figure \ref{fig:3d_unimodal_switch_stoch}-\ref{fig:3d_unimodal_switch_CLA3d} is obtained using $V = 256,\kappa' = 1, \lambda' = \delta' = 1/32$, which corresponds to a unimodal stationary distribution. In this case the cycling time distribution of CTMC,  as well as its mean and variance, is nicely recovered by its associated CLA. 

In figure \ref{fig:3d_uniform_switch_stoch}-\ref{fig:3d_uniform_switch_CLA3d}, We use the same $V$ and $\kappa'$, while choosing $\lambda' = \delta' = 3/512$, which corresponds to stationary distribution is uniform when conditioning on the hyperplane. However cycling time distribution associated with CLA underestimates the mean and variance of the cycling time associated to the original CTMC. 

\begin{figure}[!htbp]
\centering
\begin{subfigure}{0.45\textwidth}
\includegraphics[width=\linewidth, height=5cm]{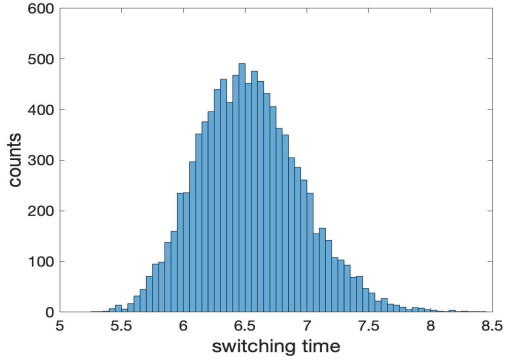}
\caption{Mean = 6.54, variance = 0.18.}
\label{fig:3d_unimodal_switch_stoch}
\end{subfigure}
\begin{subfigure}{0.45\textwidth}
\includegraphics[width=\linewidth, height=5cm]{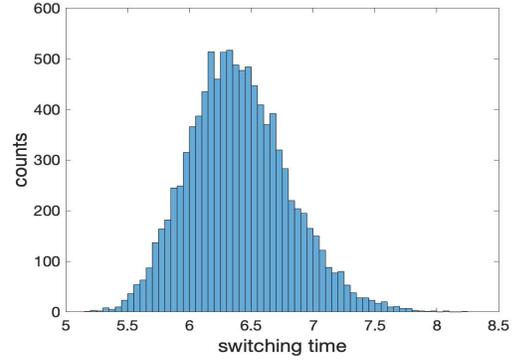}
\caption{Mean = 6.4, variance = 0.16.}
\label{fig:3d_unimodal_switch_CLA3d}
\end{subfigure}

\begin{subfigure}{0.45\textwidth}
\includegraphics[width=\linewidth, height=5cm]{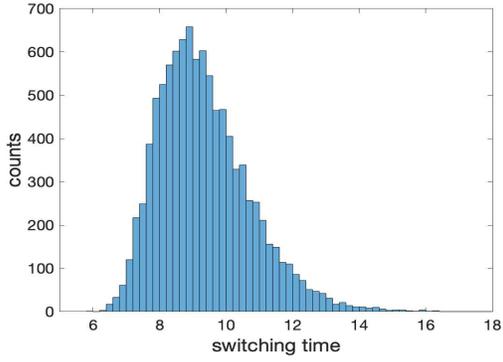}
\caption{Mean = 9.34, Variance = 1.92.}
\label{fig:3d_uniform_switch_stoch}
\end{subfigure}
\begin{subfigure}{0.45\textwidth}
\includegraphics[width=\linewidth, height=5cm]{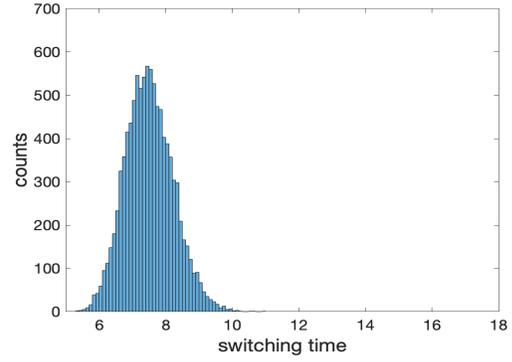}
\caption{Mean = 7.51, variance = 0.52.}
\label{fig:3d_uniform_switch_CLA3d}
\end{subfigure}
\caption{Histogram of cycling time for the CTMC \eqref{auto_CTMC}-\eqref{inout_CTMC} and the CLA 3D \eqref{Reflected_General} in $d=3$ are plotted in the left and right column respectively. Parameters of the simulation are given by $V= 256,  \kappa'= 1$ in \eqref{classicalScaling} for all four figures. In addition, in Figures \ref{fig:3d_unimodal_switch_stoch}-\ref{fig:3d_unimodal_switch_CLA3d}, $\lambda' = \delta' = 1/32$;  in Figures \ref{fig:3d_uniform_switch_stoch}-\ref{fig:3d_uniform_switch_CLA3d},  $\lambda' = \delta' = 3/512$. 
}
\label{fig:3d_hist_switch}
\end{figure}

\FloatBarrier

\subsection{Simulation results for the general d-dimensional TK model}
\label{sec:HDTK}

In this section, we perform stochastic simulation for higher dimensional TK models. We first give a precise description of 6-dimensional TK model in terms of two switchings: a slow switching between even and odd species, as well as fast switching between dominant regions near the boundary. We then investigate the effect of dimension $d$ on the mean cycling time.

Dynamics of 4-dimensional CTMC \eqref{auto_CTMC}-\eqref{inout_CTMC} is elaborated in the original publication of \cite{togashi2001transitions}. In particular, when the system volume $V$ is chosen appropriately (niether too small nor too large) , extinction of species slowly switches between odd and even species in auto-catalytic reaction loop. More precisely, the model switches between states that are abundant in odd species and states that are abundant in even species, during which the other species are almost extinct. Moreover within a temporal domain of abundant odd species, there are fast switches between species $A_1$ and $A_3$ with large imbalance between the two species,  either $X^1\gg X^3$, or $X^1\ll X^3$.

To describe the finite time dynamics for $6-$dimensional TK model, we simulate trajectories of the CTMC \eqref{auto_CTMC}-\eqref{inout_CTMC}, with the parameters $V = 64, \lambda' = \delta' = 1/256,  \kappa' = 1$. To investigate its switching behavior, we define the process $B(t)$ below that captures the disparity between odd speices and even species: 
\begin{align}\label{eq:defz}
    B(t) \coloneqq  \frac{1}{6V}\sum_{i=1}^{3} (X^{2i-1}(t) - X^{2i}(t)),
\end{align}
with initial condition $X^i_0 = V$ for all $i=1,2,\cdots 6$. Exponential convergence to the stationary distribution of CTMC, established in \cite{bibbona2020stationary}, guarantees the denominator $6V$ is approximately the average total population when $T$ is large. Similar expression is considered in \cite{togashi2001transitions} for 4-dimensional TK model to identify regions of $V$ where discreteness-induced transitions persist. 


In figure \ref{fig:6D_btraj}, finite time trajectory of $B(t)$ is plotted, and $B(t)$ switches between regions near $B(t) = 1$ and $B(t) = -1$, which are regions with abundant odd or even species respectively. Stationary distribution of $B(t)$ is plotted in Figure \ref{fig:6D_bdensity} via time averaging, which also yields a bimodal distribution having peaks near $-1$ and $1$.

\begin{figure}[!ht]
\begin{subfigure}{0.45\textwidth}
\includegraphics[width=\linewidth, height=5cm]{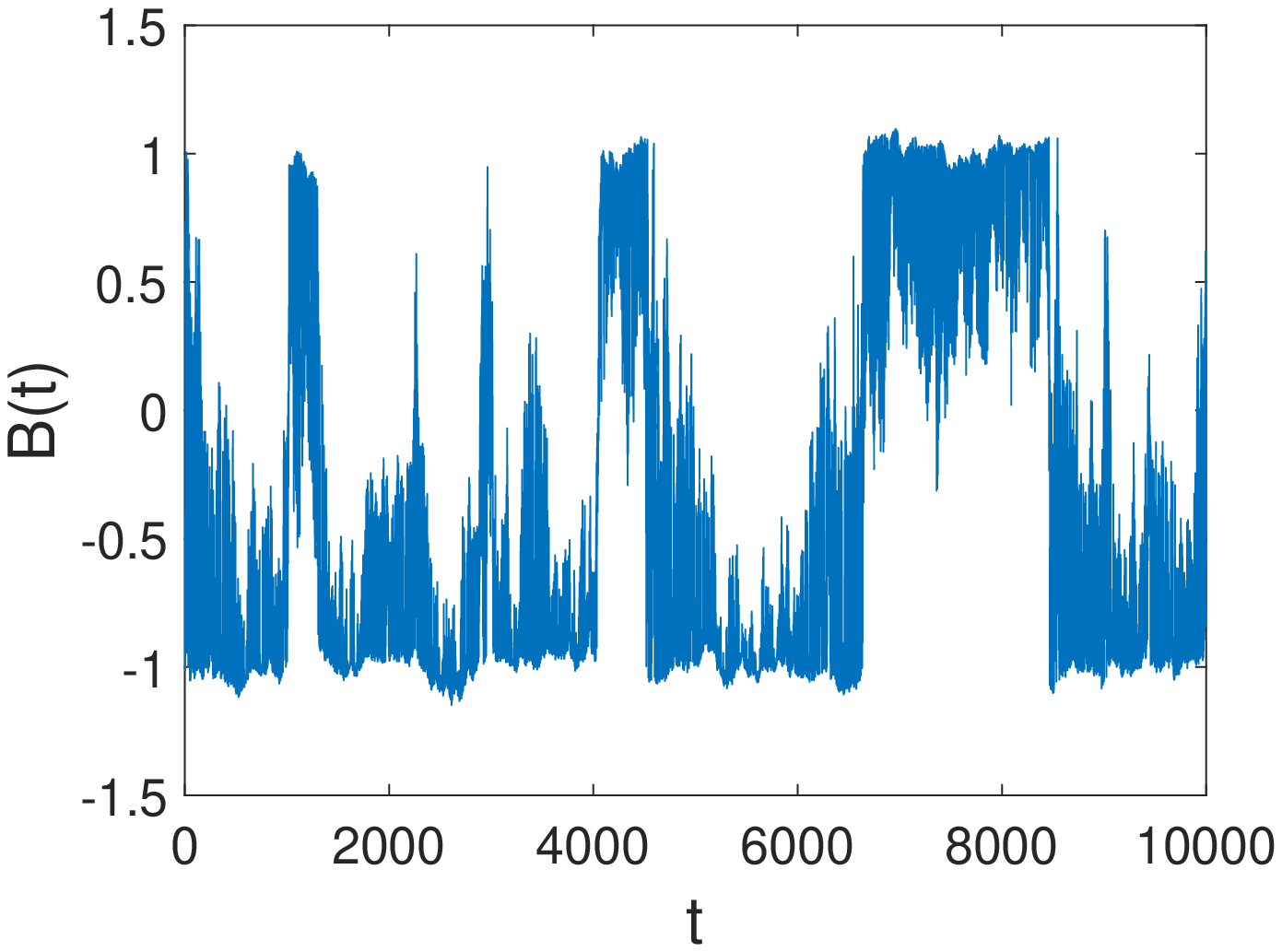}
\caption{Trajectory of $B(t)$. }
\label{fig:6D_btraj}
\end{subfigure}
\begin{subfigure}{0.45\textwidth}
\includegraphics[width=\linewidth, height=5cm]{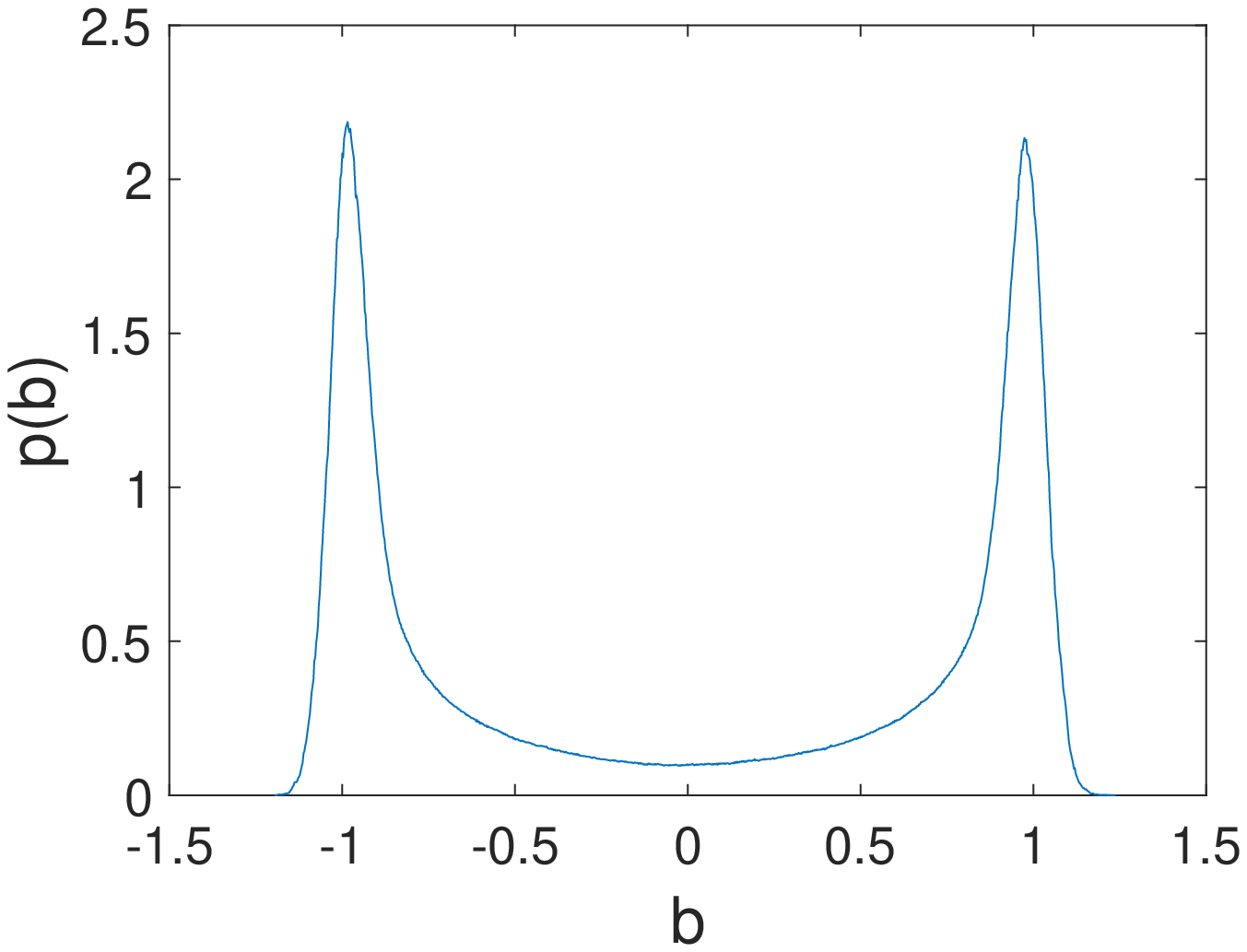}
\caption{Probability density of $B$ under stationarity.}
\label{fig:6D_bdensity}
\end{subfigure}
\caption{Finite trajectory of $B(t)$ as well as stationary distribution of $B(t)$ is obtained using a trajectory 6-dimensional TK model, whose parameters are given under classical scaling \eqref{classicalScaling} with  $V=64, \delta'= \lambda' = 1/256$ and $\kappa' = 1$. }
\end{figure}

Next, we investigate the \textit{dynamics in between slow switches} by analyzing trajectories with abundant odd species. In particular, joint distribution of 
\begin{equation*}
    \left(\rho_1(t),\, \rho_3(t)\right) = \left(\frac{X^1_t}{X^1_t+X^3_t+X^5_t},\; \frac{X^3_t}{X^1_t+X^3_t+X^5_t}\right),
\end{equation*}
conditioning on odd species being abundant, is plotted in figure \ref{fig:6D_cond_odddensity} under stationarity via time averaging. Throughout the simulation, the abundance condition is approximated by $B(t)\geq 0.95$. The joint distribution is concentrated within three boundary regions, which can be specified by $\Omega_i = \{ \rho_i \approx 0\}$ for $i = 1,3,5$, as each boundary implies one odd species is almost extinct while 2 other odd species are abundant. Moreover these regions are not symmetric with respect to the two dominant species, in the sense that  $(\rho_1, \rho_3)$ is on average $(0.4,0.6)$ when $\rho_5 \approx 0$. Such asymmetry exists since the states with more $A_1$ is sensitive with respect to the birth of $A_2$. More specifically, gaining $A_2$ (hence losing $A_1$) and losing $A_2$ (hence gaining $A_3$) is characterized by the autocatalytic reactions $A_1+A_2\rightarrow 2A_2$ and $A_2+A_3\rightarrow 2A_3$, rates of which are completely determined by the relative counts of $A_1$ and $A_3$. For states with $\rho_1\geq \rho_3$, higher counts of $A_1$ speed up the gaining of $A_2$, hence the transition $A_1\rightarrow A_2 \rightarrow A_3$ into states with $\rho_1\leq \rho_3$; on the other hand, states with $\rho_1\leq \rho_3$ is likely to remain unchanged, since $A_2$ is more likely to be exhausted.

In addition to asymmetry, dynamic of odd species, conditioning on odd species being abundant, moves between three dominant regions $\{\Omega_i\}_{i=1,3,5}$ in a clockwise manner as plotted in figure \ref{fig:6D_cond_odddensity}, $\Omega_5\rightarrow \Omega_3 \rightarrow \Omega_1 \rightarrow \Omega_5$. More specifically in the region $\Omega_5$ where $A_5$ is almost extinct, birth of $A_4$ would transition $A_3$ into $A_5$, which moves the process into the region $\Omega_3$. Birth of all other species can not change the dominant molecules of $A_1$ and $A_3$. Similar movement applies recursively leading to a clockwise cycle in figure \ref{fig:6D_cond_odddensity}.

\begin{figure}[!ht]
\centering
\begin{subfigure}{0.45\textwidth}
\includegraphics[width=\linewidth, height=6cm]{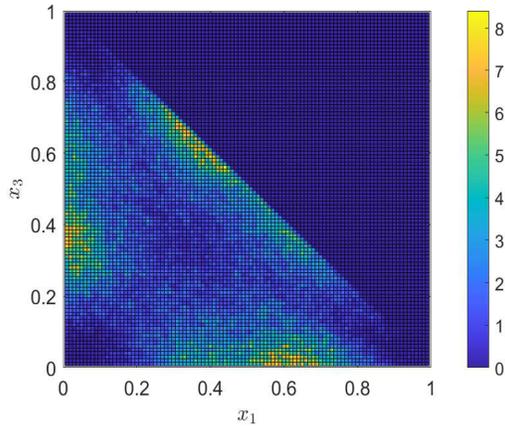}
\caption{Joint distribution of $(\rho_1,\rho_3)$ under stationarity.}
\label{fig:6D_cond_odddensity}
\end{subfigure}
\begin{subfigure}{0.45\textwidth}
\includegraphics[width=.9\linewidth, height=6cm]{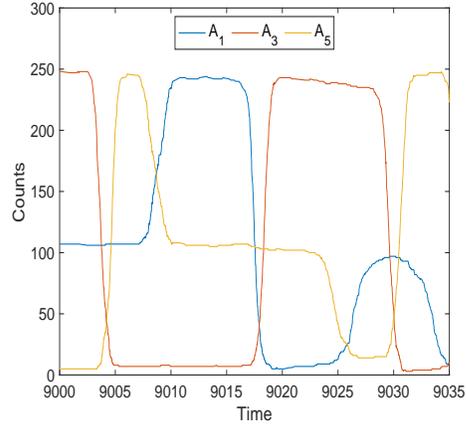}
\caption{Sample trajectory of odd species.}
\label{fig:6d_cyclicity_traj}
\end{subfigure}
\caption{Trajectory of 6D TK model is simulated until $T=10^6$ under classical scaling \eqref{classicalScaling} with  $V=64, \delta'= \lambda' = 1/256$ and $\kappa' = 1$, and initial condition $(0,\cdots,0,6V)$. Conditioning on $B\geq 0.95$, Joint distribution of $(\rho_1,\rho_3)$ under stationarity is plotted in Figure \eqref{fig:6D_cond_odddensity} along with a sample trajectory of odd species \ref{fig:6d_cyclicity_traj} when even species are almost extinct.}
\label{fig:6d_dynamic}
\end{figure}

To support our claims, a sample trajectory of odd species is plotted in figure \ref{fig:6d_cyclicity_traj}, during which we only have abundant odd species. The process stays within the region $\Omega_5$ at time $t= 9000$, which switches into the region $\Omega_3$ at the time $t=9010$, then proceeds to $\Omega_1$ around $t= 9020$. At time $t = 9025$ the process returns to $\Omega_5$ and completed a clockwise cycle.

We summarize the dynamics for $d=6$. Two types of switches can be utilized in describing finite time dynamics of $6-$dimensional TK model when discreteness-induced transitions persist. In particular, abundant molecule species switches between odd and even on a slower time scale. In between these switches, the dynamics dominated by odd species will cycle between boundary regions as $\Omega_5\rightarrow \Omega_3 \rightarrow \Omega_1 \rightarrow \Omega_5$, or $\Omega_6\rightarrow \Omega_4 \rightarrow \Omega_2 \rightarrow \Omega_6$ when the dynamics is dominated by even species.



\begin{figure}
    \centering
     \includegraphics[width=0.6\linewidth, height=7cm]{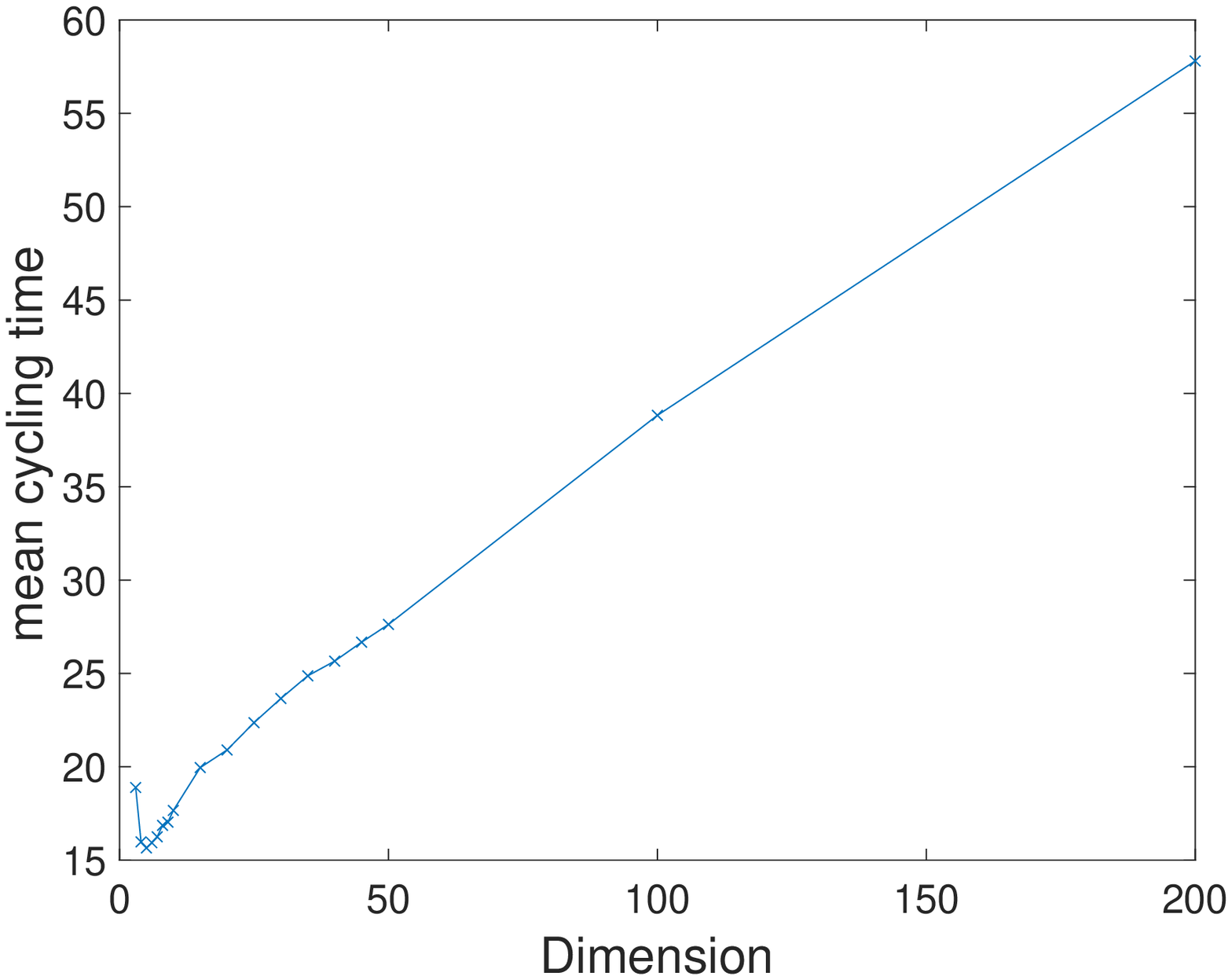}
    \caption{Mean cycling time is plotted against dimension $d$, for $d$ in the set $\{3,4,5,6,7,8,9,10,15,20,25,30,35,40,45,50,100,200\}$. Mean cycling time is averaged over 1000 samples when $d\leq 50$ (and over 100 samples for $d=100,200$), and it is determined for each CTMC trajectory with the parameters given by $V=64$, $\kappa'=1$ and $\lambda' = \delta' = 1/256$. Throughout all simulations, the initial condition is chosen as $X^d_0 = dV$ and zero for all other species. The cycling time for each trajectory is defined as the second time $X^d$ reaches peak abundance.}
    \label{fig:HD_cycle}
\end{figure}

\smallskip

Last but not least,  mean cycling time is plotted against some dimension $d$ in Figure \ref{fig:HD_cycle}. As in previous sections, cycling time is computed for each trajectory as the second time $X^d_t$ reaches peak abundance (initially there are only species $d$), and mean cycling time are averaged over independent samples. Despite longer cycles as $d$ increases, the cycling time decreases for smaller $d$. This is because if the birth of species $A_2$ occurs before the birth of species $A_1$, the switches from $A_d\rightarrow A_1$ and $A_1\rightarrow A_2$ would happen simultaneously, as it can be observed in Figure \ref{fig:traj_4D}-\ref{fig:traj_6D}). Such simultaneous switches speed up the cycling and hence reduce the cycling time, whereas these switches are more likely to occur in higher dimensions. However as $d$ increases further, such effect is properly averaged out, and mean cycling time approximately increases linearly in $V$. 

\FloatBarrier

\section*{Acknowledgments}
This research is initiated during the American Institute of Mathematics (AIM) workshop ``Limits and control of stochastic reaction networks" in July 2021. The authors gratefully acknowledge the support of AIM and the organizers of the workshop. The authors are indebted to the stimulating discussion during the monthly TK group meetings with Lea Popovic, Ruth Williams, Grzegorz Rempala, Hye Won Kang, Enrico Bibbona, Siri Paola, Wasiur Khuda Bukhsh and 
Felipe Campos Vergara.
This research is partially supported by NSF awards DMS 1855417 and DMS 2152103 and ONR grant TCRI N00014-19-S-B001 to W.T. Fan.

\section*{Conflict of interest}
The authors declare no conflicts of interest.

\bibliographystyle{alpha}
\bibliography{CLA_TK}

\section{Proofs}\label{S:Proof}

This section contains the proofs of the  results stated in Section \ref{S:CLA}. The proofs will be collected in subsection \ref{SS:Proofs}, after we develop  some preliminary estimates for the CLA in subsections \ref{SS:coe} and \ref{SS:Lya}.

\subsection{Estimates for the coefficients of CLA}\label{SS:coe}

Let $\Gamma,\sigma,b,\gamma$ be defined as in \eqref{CLA_coe1}-\eqref{CLA_coe2}, note that $b$ is globally Lipschitz for dimension $d=2$, but only locally Lipschitz for $d\geq 3$ due to the terms $\sum_{k = 1}^d e_k(\kappa'(x_{k-1} - x_{k+1})x_k$. Precisely,
\begin{align*}
&|b(x)-b(y)| \\
\leq&\, \kappa'\Big|\sum_{k = 1}^d e_k\Big((x_{k-1} - x_{k+1})x_k -(y_{k-1} - y_{k+1})y_k\Big)\Big| + \delta'\Big|\sum_{k = 1}^d e_k\,(x_k-y_k)\Big|\\
= &\,\kappa'\Big|\sum_{k = 1}^d e_k\Big((x_{k-1}-y_{k-1})x_k + y_{k-1}(x_k-y_k)\,+\,(x_{k+1}-y_{k+1})x_k + y_{k+1}(x_k-y_k)\Big)\Big| + \delta'|x-y|\\
\leq &\,2\kappa'|x-y|_{\infty}(|x|+|y|)\,+\, \delta'|x-y|\\
\leq &\,|x-y|\,\Big(\delta'+2\kappa'(|x|+|y|)\Big),
\end{align*}
where $|x|=\sqrt{\sum_{i=1}^dx_i^2}$ is the Euclidean norm of $x$, and $|x|_{\infty}:=\max_{1\leq i\leq d}|x_i|$.

Note also that $\Gamma(x)$ is symmetric and positive, for all $x \in \R^d$. Precisely,  for all $x \in \R^d_+$ and $\theta \in \R^d$, 
\begin{align}
    \langle \theta, \Gamma(x) \theta\rangle &= \sum_{1 \le k \le d} \left( \lambda'+\delta'x_k\right)\theta_k^2 + \sum_{1 \le k \le d} \left(\theta_k^2 - \theta_k\theta_{k-1} \right)x_{k}x_{k-1} + \sum_{1 \le k \le d} \left(\theta_k^2 - \theta_{k}\theta_{k+1}\right)x_kx_{k+1}\notag\\
    &=\sum_{1 \le k \le d} \left( \lambda'+\delta'x_k\right)\theta_k^2 + \sum_{1 \le k \le d} \left(\theta_k^2 - \theta_k\theta_{k-1} \right)x_{k}x_{k-1} + \sum_{1 \le k \le d} \left(\theta_{k-1}^2 - \theta_{k-1}\theta_{k}\right)x_{k-1}x_{k} \notag\\
    &=\sum_{1 \le k \le d} \left( \lambda'+\delta'x_k\right)\theta_k^2 + \sum_{1 \le k \le d} \left(\theta_k - \theta_{k-1}\right)^2 x_{k-1}x_k  \notag\\ 
    &\ge \lambda'|\theta|^2. \label{EllipticGamma}
\end{align}
We can show that $\sigma$ is locally Lipschitz  and grows linearly.
We note that 
\begin{align*}
    \|\Gamma(x)\|_\infty := \max_{1 \le i,j \le d} |\Gamma_{i,j} (x)| &\le \max_{1 \le i \le d}\kappa ' (|x_{i-1}| + |x_{i+1}|)|x_i|+ \lambda' + \delta'|x_i|\\
    &\le 2 \kappa' |x|_1^2 +\delta' |x|_1 +\lambda'.
\end{align*}
So the operator norm on the space of $d\times d$ matrix is of quadratic growth:
\begin{align*}
    \|\Gamma(x)\| := \sup_{y \in \R^d,|y| = 1} |\Gamma(x)y| \le C\left( 2 \kappa' |x|_1^2 +\delta' |x|_1 +\lambda'\right)
\end{align*}
for some $C > 0$ by the equivalence of norms in $\R^n$. Then we have 
\begin{align*}
    \|\sigma(x)\|_\infty \le C\|\sigma(x)\|  =C\sqrt{\|\Gamma(x)\|} \le C' \sqrt{\left( 2 \kappa' |x|_1^2 +\delta' |x|_1 +\lambda'\right)}.
\end{align*}
Locally Lipschitz of $\sigma(\cdot)$ is given by (\ref{EllipticGamma}) and the Powers-Stormer inequality \cite[Lemma 4.2]{powers1970free}:
\begin{align*}
    \| \sqrt{\Gamma(x)} - \sqrt{\Gamma(y)}\|_{HS} \le \frac{1}{\sqrt{\lambda'}} \| \Gamma(x) - \Gamma(y)\|_{HS},
\end{align*}
where $\|\cdot\|_{HS}$ denotes the Hilbert-Schmidt norm. Since all norms are equivalent in finite dimensional vector space and each entry in $\Gamma(\cdot)$ is of second order polynomial, $\Gamma(x)$ is locally Lipschitz, hence $\sqrt{\Gamma(\cdot)} = \sigma(\cdot)$ is also locally Lipschitz.
\subsection{Lyapunov inequalities and moment estimates}\label{SS:Lya}

Recall the Lyapunov function $U$ defined in \eqref{lyapunov_function} and the differential operator ${\cal L}$ in \eqref{generator}. We also write $U^p(x) := \left(U(x)\right)^p$ for simplicity.
\begin{lemma}\label{lem:local_time_out}
Let $\gamma$ be as in (\ref{CLA_coe1}), then we have $\nabla U^p(x) \cdot \gamma(x) \le 0$ for all $x \in \R_+^d$ and  $p\in\mathbb{N}$.
\end{lemma}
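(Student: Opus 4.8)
The plan is to reduce the statement to a one-line computation whose only non-obvious ingredient is a telescoping identity coming from the cyclic structure of the autocatalytic reactions \eqref{auto}. The case $p=0$ is trivial, so fix $p\ge 1$.

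First I would compute $\nabla U^p$ on $\R_+^d$. Since $|x|_1=\sum_{i=1}^d x_i$ there, we have $\nabla|x|_1=\mathbf 1:=(1,\dots,1)$, so by the chain rule
\[
\nabla U^p(x)=p\,U^{p-1}(x)\,\nabla U(x)=2p\,U^{p-1}(x)\Big(|x|_1-\tfrac{d\lambda'}{\delta'}\Big)\mathbf 1 .
\]
Hence $\nabla U^p(x)\cdot\gamma(x)$ equals a nonnegative scalar multiple of $\big(|x|_1-\tfrac{d\lambda'}{\delta'}\big)\,\big(\mathbf 1\cdot b(x)\big)$, and everything comes down to evaluating $\mathbf 1\cdot b(x)=\sum_{k=1}^d\big(\kappa'(x_{k-1}-x_{k+1})x_k+\lambda'-\delta'x_k\big)$.

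Next I would dispose of the quadratic part: with the cyclic conventions $x_0=x_d$, $x_{d+1}=x_1$, reindexing $k\mapsto k-1$ gives $\sum_{k=1}^d x_{k+1}x_k=\sum_{k=1}^d x_k x_{k-1}$, so $\sum_{k=1}^d(x_{k-1}-x_{k+1})x_k=0$. Therefore $\mathbf 1\cdot b(x)=d\lambda'-\delta'|x|_1=-\delta'\big(|x|_1-\tfrac{d\lambda'}{\delta'}\big)$, and consequently
\[
\nabla U^p(x)\cdot b(x)=-2p\delta'\,U^{p-1}(x)\Big(|x|_1-\tfrac{d\lambda'}{\delta'}\Big)^2=-2p\delta'\,U^{p}(x)\le 0 .
\]
Dividing by $|b(x)|>0$ yields $\nabla U^p(x)\cdot\gamma(x)=-2p\delta'\,U^p(x)/|b(x)|\le 0$, which is the claim.

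The only point to be careful about is that $\gamma(x)=b(x)/|b(x)|$ is well defined wherever it is used: if $x\in\partial\R_+^d$ with $x_i=0$, then $b_i(x)=\lambda'>0$, so $|b(x)|\ge\lambda'>0$ on $\partial\R_+^d$; since $\gamma$ enters \eqref{Reflected_General} only through the boundary local time $L$, this is all that is needed, and on the interior the inequality is read at any point where $b(x)\neq 0$. There is essentially no obstacle here — the substantive observation is simply the cyclic cancellation $\sum_k (x_{k-1}-x_{k+1})x_k=0$, together with the bookkeeping $U^{p-1}U=U^p$.
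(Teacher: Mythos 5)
Your proof is correct and follows essentially the same route as the paper: compute $\nabla U^p(x)\cdot b(x)$, observe that the autocatalytic terms cancel cyclically so that $\mathbf 1\cdot b(x)=-\delta'\bigl(|x|_1-\tfrac{d\lambda'}{\delta'}\bigr)$, conclude $\nabla U^p(x)\cdot b(x)=-\tfrac{2p}{\delta'}\bigl(\delta'|x|_1-d\lambda'\bigr)^{2p}\le 0$, and divide by $|b(x)|>0$. Your added remark that $|b(x)|\ge\lambda'>0$ on the boundary (where $\gamma$ is actually used) is a harmless refinement the paper leaves implicit.
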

\begin{proof}
    Observe that for all $x \in \R^d_+$,
    \begin{align*}
        \nabla U^p(x) = \sum_{i = 1}^d 2p\left(|x|_1 - \frac{d\lambda'}{\delta'}\right)^{2p-1}e_i
    \end{align*}
    where $\{e_i\}_{i = 1}^d$ is the standard basis in $\R^d$, since $\gamma(x) = \frac{b(x)}{|b(x)|}$, we only need to check $\nabla U^p(x) \cdot b(x) \le 0$: 
    \begin{align*}
        \nabla U^p(x) \cdot b(x) &= 2p\frac{1}{\delta'} \sum_{i = 1}^d (\delta' |x|_1 - d\lambda')^{2p-1}(\kappa'(x_{i - 1}- x_{i+1})x_i - \delta'x_i +\lambda')\\
        &= \frac{-2p}{\delta'}\left(\delta'|x|_1 - d \lambda'\right)^{2p} \le 0.
    \end{align*}
\end{proof}

\begin{lemma}[Lyapunov inequalities]\label{lm:lyapunov}
Let $U$ and ${\cal L}$ be defined in (\ref{lyapunov_function}) and (\ref{generator}) respectively. Then for all  $p\in\mathbb{N}$, there exist constants $c_p,c_p' >0$ such that 
\begin{align}\label{lyapunov}
    {\cal L}U^p(x) \le - c_pU^p(x) +c_p'
\end{align}
for all $x \in \R_+^d$. Furthermore, there is a compact set $K \subset \R_d^+$ and $f: \R^d_+\to [1,\infty)$ such that 
\begin{align}\label{lyapunov_2}
    {\cal L} U(x) \le - c_1' f(x) + 1_K c_2'
\end{align}
for some positive constants $c_1'$ and $c_2'$. 
\end{lemma}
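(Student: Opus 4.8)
\textbf{Proof proposal for Lemma \ref{lm:lyapunov}.}

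The plan is to compute $\mathcal{L}U^p$ directly using the chain rule and the explicit form of $b$ and $\Gamma$, exploiting the fact that $U$ depends on $x$ only through the total mass $s := |x|_1 = \sum_i x_i$. Writing $U(x) = (s - d\lambda'/\delta')^2$ and $\psi(s) := (s - d\lambda'/\delta')^{2p}$, we have $\partial_i U^p = \psi'(s)$ and $\partial_i\partial_j U^p = \psi''(s)$ for all $i,j$, so the second-order term in $\mathcal{L}U^p$ collapses to $\frac{1}{2V}\psi''(s)\sum_{i,j}\Gamma_{i,j}(x)$ and the first-order term to $\psi'(s)\sum_i b_i(x)$. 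The key structural observation—already implicit in the proof of Lemma \ref{lem:local_time_out}—is that $\sum_{i=1}^d b_i(x) = d\lambda' - \delta' s$ (the autocatalytic terms $\kappa'(x_{i-1}-x_{i+1})x_i$ telescope to zero around the cycle), which mirrors the fact that the total mass of the CTMC is an immigration–death process. Likewise, summing all entries of $\Gamma$: the off-diagonal $-\kappa' x_k x_{k+1}$ terms cancel against half of the diagonal $\kappa'(x_{k-1}+x_{k+1})x_k$ contributions, leaving $\sum_{i,j}\Gamma_{i,j}(x) = d\lambda' + \delta' s$.

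Thus the computation reduces to a \textbf{one-dimensional} estimate: with $u := s - d\lambda'/\delta' \in [-d\lambda'/\delta', \infty)$,
\begin{align*}
\mathcal{L}U^p(x) = \frac{1}{2V}(d\lambda'+\delta' s)\,2p(2p-1)u^{2p-2} - (\delta' s - d\lambda')\,2p\,u^{2p-1} = \frac{p(2p-1)}{V}(d\lambda'+\delta's)u^{2p-2} - 2p\delta' u^{2p}.
\end{align*}
Since $d\lambda' + \delta' s = 2d\lambda' + \delta' u$ is linear in $u$ with nonnegative coefficient, the dominant term as $|u|\to\infty$ is $-2p\delta' u^{2p} = -2p\delta' U^p(x)$, which is strictly negative of the desired order $U^p$, while the remaining term grows only like $|u|^{2p-1}$. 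A routine application of Young's inequality (bounding $|u|^{2p-1} \le \epsilon u^{2p} + C_\epsilon$ and $|u|^{2p-2}\le \epsilon u^{2p} + C_\epsilon$) then yields $\mathcal{L}U^p(x) \le -c_p U^p(x) + c_p'$ for suitable constants, with $c_p$ any number below $2p\delta'$. For the second assertion \eqref{lyapunov_2}, take $p=1$: the inequality $\mathcal{L}U(x) \le -c_1 U(x) + c_1'$ gives $\mathcal{L}U(x) \le -\tfrac{c_1}{2}(U(x)+1) + (c_1' + \tfrac{c_1}{2})$; outside the compact set $K := \{x\in\R_+^d : \tfrac{c_1}{2}(U(x)+1) \le c_1' + \tfrac{c_1}{2} + 1\}$ (which is compact because $U(x)+1 \to \infty$ as $|x|_1\to\infty$, and the sublevel set of $U$ intersected with $\R_+^d$ is bounded) we get $\mathcal{L}U(x) \le -(U(x)+1) \le -f(x)$ with $f := U+1 \ge 1$, and on $K$ we absorb everything into the indicator term. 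Setting $c_1' \mapsto 1$, $c_2'$ large enough gives the stated form.

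I do not expect any serious obstacle here; the lemma is essentially a bookkeeping exercise once the two telescoping/cancellation identities $\sum_i b_i(x) = d\lambda' - \delta' s$ and $\sum_{i,j}\Gamma_{i,j}(x) = d\lambda' + \delta' s$ are in hand. The only point requiring a little care is making sure the constants $c_p, c_p'$ can be chosen \emph{uniformly} over $x\in\R_+^d$ — this is fine because the entire expression depends on $x$ through the single scalar $s \ge 0$, equivalently $u \ge -d\lambda'/\delta'$, so one is simply estimating a single polynomial in $u$ on a half-line, and the leading coefficient $-2p\delta'$ has the right sign. A secondary minor check is that $\psi(s) = U^p(x)$ is genuinely $C^2$ (it is, being an even power of an affine function) so that $\mathcal{L}U^p$ is well-defined pointwise and the above manipulations are legitimate.
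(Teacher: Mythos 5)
Your proposal is correct and follows essentially the same route as the paper: both reduce $\mathcal{L}U^p$ to a one-variable polynomial in $u=|x|_1-\tfrac{d\lambda'}{\delta'}$ via the identities $\sum_i b_i(x)=d\lambda'-\delta'|x|_1$ and $\sum_{i,j}\Gamma_{i,j}(x)=d\lambda'+\delta'|x|_1$, and then let the negative leading term of order $u^{2p}$ absorb the lower-order terms (the paper by fixing $c_p=p\delta'/2$ and noting the remainder has negative leading coefficient, you by Young's inequality), with the same sublevel-set construction of $K$ and $f=U+1$ for \eqref{lyapunov_2}. One cosmetic remark: with your particular $K$ the bound outside $K$ comes out as $\mathcal{L}U\le -\tfrac{c_1}{4}\,(U+1)$ (after keeping the constant or slightly enlarging $K$), not literally $\le -(U+1)$, but since \eqref{lyapunov_2} permits an arbitrary positive constant $c_1'$ in front of $f$ this does not affect the argument.
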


\begin{proof}
   For $x = (x_1,\cdots,x_d) \in \R^d_+$ and $1 \le i,j \le d, x\in \R_+^d$, we have that  
    \begin{align*}
        \frac{\partial}{\partial x_i}U^p(x) = p\left( |x|_1 - \frac{d\lambda'}{\delta'}\right)^{p-1};        \quad \frac{\partial^2}{\partial x_i \partial x_j}U^p(x) = 2p(2p-1)\left( |x|_1 - \frac{d\lambda'}{\delta'}\right)^{2p-2} .
    \end{align*}
    Applying the differential operator ${\cal L}$ to $U^p(x)$, 
    \begin{align*}
        {\cal L} U^p(x) &= \frac{1}{2V}\sum_{i,j = 1}^d \Gamma_{i,j}(x)\frac{\partial^2}{\partial x_i \partial x_j}U^p(x) +\sum_{i = 1}^d b_i(x) \frac{\partial }{\partial x_i} U^p(x)\\
        &= \frac{p(2p-1)}{V}\left( |x|_1 - \frac{d\lambda'}{\delta'}\right)^{2p-2} \sum_{i,j = 1}^d \Gamma_{i,j}(x) + p\left(|x|_1 - \frac{d\lambda'}{\delta'}\right)^{2p-1}\sum_{i=1}^d b_i(x)\\
        &=\frac{p(2p-1)}{V}\left( |x|_1 - \frac{d\lambda'}{\delta'}\right)^{2p-2} (\delta'|x|_1 +d \lambda') + p\delta'\left(|x|_1 - \frac{d\lambda'}{\delta'}\right)^{2p-1}\left(\frac{d\lambda'}{\delta'} - |x|_1\right)\\
        &=\frac{p(2p-1)}{V}\left( |x|_1 - \frac{d\lambda'}{\delta'}\right)^{2p-2} (\delta'|x|_1 +d \lambda') - p\delta'\left(|x|_1 - \frac{d\lambda'}{\delta'}\right)^{2p}\\
        &=p\delta'\left(|x|_1 - \frac{d\lambda'}{\delta'}\right)^{2(p-1)} \left(\frac{(2p-1)}{V}\left(|x|_1 + \frac{d\lambda'}{\delta'}\right) -  \left( |x|_1 - \frac{d\lambda'}{\delta'}\right)^2\right)
    \end{align*}
    Let $c_p = \frac{p\delta'}{2} $ then 
    \begin{align*}
        {\cal L}U^p(x) + c_pU^p(x)&= p\delta' \left(|x|_1 - \frac{d\lambda'}{\delta'}\right)^{2(p-2)} \left( \frac{2p-1}{V}\left(|x|_1 + \frac{d\lambda'}{\delta'}\right) - \frac{1}{2}\left(|x|_1 - \frac{d\lambda'}{\delta'}\right)^2\right)
    \end{align*}
    which is bounded above in $\R^d_+$ by some positive constant $c_p'$, due to the fact that 
    \[
    	\left( \frac{2p-1}{V}\left(|x|_1 + \frac{d\lambda'}{\delta'}\right) - \frac{1}{2}\left(|x|_1 - \frac{d\lambda'}{\delta'}\right)^2\right)
    \] 
    is  quadratic with leading coefficient being a negative number. This proves (\ref{lyapunov}).
    
    To prove (\ref{lyapunov_2}), we let $p = 1$ and $f(x) := V(x) + 1$ and let $c_1' = \min\{\delta',\frac{d\lambda'}{2V}\}$, then 
    \begin{align}
        {\cal L}U(x) + c_1'U(x) = \left(\frac{d \lambda'}{V} - c_1'\right) + \frac{\delta'}{V}|x|_1 - (2\delta'- c_1') U(x).
    \end{align}
    Note that by the choice of $c_1'$, both $\frac{d \lambda'}{V} - c_1'$ and $(2\delta'- c_1')$ are positive, and by the basic facts of quadratic functions, ${\cal L}U(x) + c_1'U(x)$ is only positive on a compact set, call it $K$, and since it is also continuous, it is uniformly bounded by some $c_2' > 0$. So (\ref{lyapunov_2}) holds.
\end{proof}

\begin{lemma}\label{lem:sup_bound}
    Let $x \in \R^d_+$ and let $Z$ be the solution to  (\ref{Reflected_General}) with $Z_0 = x$, then 
    \begin{align}\label{sup_bound}
        \E_x\left[\sup_{s \in [0,t]} \left|Z_s\right|^p\right] < \infty,\quad \forall p \in \mathbb{N} \text{ and } t \in\R_+.
    \end{align}
\end{lemma}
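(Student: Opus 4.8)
The plan is to apply It\^o's formula to $U^p(Z)$, use the Lyapunov inequality \eqref{lyapunov} to kill the drift, use Lemma \ref{lem:local_time_out} to discard the boundary term, and bootstrap from moments at fixed times to the moment of the running maximum. Throughout I would localize with the stopping times $\tau_n := \inf\{t\ge 0:\ |Z_t|>n\}$, which satisfy $\tau_n\uparrow\infty$ almost surely since Theorem \ref{T:Wellposed} provides a global path-wise solution. Because $U$ is a polynomial, hence $C^2$ on $\R_+^d$, It\^o's formula for the reflected equation \eqref{Reflected_General} gives, for every $p\in\mathbb{N}$ and $s,n$,
\begin{equation*}
U^p(Z_{s\wedge\tau_n}) = U^p(Z_0) + \int_0^{s\wedge\tau_n}{\cal L}U^p(Z_u)\,du + M^{(n)}_s + \frac{1}{\sqrt V}\int_0^{s\wedge\tau_n}\nabla U^p(Z_u)\cdot\gamma(Z_u)\,dL_u,
\end{equation*}
where $M^{(n)}_s := \frac{1}{\sqrt V}\int_0^{s\wedge\tau_n}\nabla U^p(Z_u)^{\top}\sigma(Z_u)\,dW_u$ is a true martingale (its integrand is bounded on $[0,s\wedge\tau_n]$, by continuity of $Z$ and local Lipschitzness of $\sigma,\nabla U^p$) and the last term is non-increasing in $s$ and $\le 0$, since $L$ is non-decreasing and the integrand is $\le 0$ by Lemma \ref{lem:local_time_out}.

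\emph{Step 1: fixed-time moments.} Taking expectations, dropping the non-positive boundary term, and using ${\cal L}U^p\le -c_pU^p+c_p'\le c_p'$ from \eqref{lyapunov}, I get $\E_x[U^p(Z_{s\wedge\tau_n})]\le U^p(x)+c_p's$; letting $n\to\infty$ and invoking Fatou's lemma yields
\begin{equation*}
\sup_{s\in[0,t]}\E_x\bigl[U^p(Z_s)\bigr]\le U^p(x)+c_p't<\infty\qquad\text{for every }p\in\mathbb{N},\ t\in\R_+ .
\end{equation*}
Since $|y|_1^{2p}\le C_p(1+U^p(y))$ on $\R_+^d$ (compare $|y|_1$ with $|y|_1-d\lambda'/\delta'$ separately for $|y|_1$ small and large) and all norms on $\R^d$ are equivalent, this already gives $\sup_{s\le t}\E_x[|Z_s|^q]<\infty$ for every $q\in\mathbb{N}$.

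\emph{Step 2: the running maximum.} Returning to the It\^o expansion and discarding the non-positive boundary term pointwise in $s$,
\begin{equation*}
\sup_{s\le t\wedge\tau_n}U^p(Z_s)\le U^p(x)+\int_0^{t\wedge\tau_n}|{\cal L}U^p(Z_u)|\,du+\sup_{s\le t}|M^{(n)}_s| .
\end{equation*}
From the explicit formula for ${\cal L}U^p$ in the proof of Lemma \ref{lm:lyapunov} one has $|{\cal L}U^p(y)|\le C_p(1+|y|_1^{2p})\le C_p'(1+U^p(y))$, so the drift integral has expectation at most $C_p' t\,(1+\sup_{u\le t}\E_x[U^p(Z_u)])$, finite by Step 1. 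For the martingale, the Burkholder--Davis--Gundy inequality together with $|\nabla U^p(y)|\le C_p(1+|y|_1^{2p-1})$ and the linear growth $\|\sigma(y)\|\le C(1+|y|_1)$ from subsection \ref{SS:coe} gives
\begin{equation*}
\E_x\Bigl[\sup_{s\le t}|M^{(n)}_s|\Bigr]\le C\,\E_x\Bigl[\Bigl(\tfrac1V\!\int_0^{t\wedge\tau_n}\!|\nabla U^p(Z_u)^{\top}\sigma(Z_u)|^2\,du\Bigr)^{1/2}\Bigr]\le C_p\Bigl(t\bigl(1+\sup_{u\le t}\E_x[|Z_u|_1^{4p}]\bigr)\Bigr)^{1/2},
\end{equation*}
which is finite by Step 1 applied at exponent $2p$. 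Hence $\E_x[\sup_{s\le t\wedge\tau_n}U^p(Z_s)]$ is bounded uniformly in $n$; letting $n\to\infty$ with monotone convergence gives $\E_x[\sup_{s\le t}U^p(Z_s)]<\infty$ for every $p$. Finally, $|Z_s|^{2p}\le|Z_s|_1^{2p}\le C_p(1+U^p(Z_s))$ yields \eqref{sup_bound} for even exponents, and odd exponents follow from $a^{2m+1}\le 1+a^{2m+2}$ for $a\ge0$.

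\emph{Expected main obstacle.} There is no deep difficulty, only the mild circularity that ${\cal L}U^p$ and $\nabla U^p$ are not bounded but only dominated by powers of $U$: the argument must be ordered so that the fixed-time moment bounds — available for \emph{every} power precisely because \eqref{lyapunov} holds for every $p$ — are secured first, and the BDG step then consumes moments of order $2p$ to control the supremum of $U^p$. The reflection term never causes trouble, as it only ever helps, being non-positive along the path by Lemma \ref{lem:local_time_out}.
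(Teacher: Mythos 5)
Your proof is correct, but it takes a genuinely different route from the paper's. The paper applies It\^o's formula only at the level of $U$ itself, discards the boundary term via Lemma \ref{lem:local_time_out}, obtains a pathwise inequality, raises it to the power $2p$, takes suprema and expectations, and then — because the resulting bound involves $\sqrt{y(s)}$ and $y(s)^{3/4}$ of the unknown quantity $y(t)=\E_x[\sup_{s\le t}U^{2p}(\widetilde Z^{(m)}_s)]$ — closes the argument with a nonlinear Gronwall--Bihari type inequality (cited from Dragomir), yielding the explicit bound \eqref{grown_result} before passing to the limit by Fatou. You instead apply It\^o directly to $U^p$ for each $p$, use the full family of Lyapunov inequalities \eqref{lyapunov} from Lemma \ref{lm:lyapunov} to first secure fixed-time moments $\sup_{s\le t}\E_x[U^p(Z_s)]<\infty$ for every $p$, and then bootstrap to the running maximum in one shot: the drift is dominated by $U^p$ and the BDG/Jensen estimate of the martingale consumes moments of order $2p$, so no Gronwall lemma (linear or nonlinear) is needed at all. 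Your ordering of quantifiers is sound and each estimate checks out (the localization, the legitimacy of the stopped martingale, the comparison $|y|_1^{2p}\le C_p(1+U^p(y))$, and the monotone-convergence passage $n\to\infty$). What the two approaches buy: yours is more elementary and exploits the fact that \eqref{lyapunov} was proved for all powers, whereas the paper's route produces the explicit, $m$-uniform and locally-uniform-in-$x$ bound \eqref{grown_result}, which is reused later in the proof of the Feller property (Proposition \ref{prop:feller}); note, though, that your bounds are also explicit in $U^p(x)$, $U^{2p}(x)$ and $t$, hence locally uniform in the initial condition, so they would serve that downstream purpose as well.
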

\begin{proof}
    Suppose $Z$ is the process that solves the CLA (\ref{Reflected_General}) that starts at $x \in \R_+^d$ and let $U$ be defined as (\ref{lyapunov_function}), then by Ito's formula, we have 
    \begin{align}\label{Ito}
        U(Z_t) = U(x) + \int_0^t {\cal L}U(Z_s)ds + M_t + \int_0^t\nabla U(Z_s) \cdot \gamma(Z_s) dL_s,\quad a.s.\, \forall t\ge 0,
    \end{align}
    where $M_t$ is the local martingale term that has the following explicit expression:
    \begin{align}\label{lcoal_mtg}
        M_t = \frac{2}{\sqrt{V}}\sum_{j = 1}^d \int_0^t \left(|Z_s|_1 - \frac{d\lambda'}{\delta'}\right)\left(\sum_{i = 1} \sigma_{i,j}(Z_s) \right)dW^{(j)}_s,
    \end{align}
    where $(\sigma_{i,j})_{i,j = 1}^d = \sigma = \sqrt{\Gamma}$ is the dispersion matrix and $\{W^{(j)}\}_{j =1}^d$ are independent one dimensional Brownian motions. By Lemma \ref{lem:local_time_out}, the process $\int_0^t\nabla U(Z_s) \cdot \gamma(Z_s) dL_s$ is non-positive and decreasing since $\gamma(x) = \frac{b(x)}{\|b(x)\|}$. Therefore we have the following almost sure inequality for all $t \ge 0$:
    \begin{align*}
         U(Z_t) &\le U(x) + \int_0^t {\cal L}U(Z_s)ds + M_t\\
         &= U(x) + \int_0^t \frac{1}{V} \left(d\lambda' +\delta'|Z_s|_1\right) - 2\delta' U(Z_s)ds +M_t.
    \end{align*}
    Note that $\sum_{i,j = 1}^d \Gamma_{i,j}(x) =  d\lambda' +\delta' |x|_1 = \delta'\sqrt{U(x)} +2 d\lambda' $ and $\delta' U(x) \ge 0$ for all $x$, therefore we can rewrite the above inequality as 
    \begin{align}\label{pre_bound}
        U(Z_t) \le U(x) + \frac{1}{V}\int_0^t \delta'\sqrt{U(Z_s)} + 2d\lambda'ds + M_t,\quad \text{ a.s. for all }t \ge 0.
    \end{align}
    For each $m\in\mathbb{N}$, we let $\{\tau_m'\}_{m = 1}^\infty$  be the family of stopping times defined by $\tau_m' = \inf\{t\geq 0: |Z_t|_1 \ge m\}$ and let $\tau_m = \tau'_m \wedge m$. Define the stopped process $\widetilde{Z}^{(m)}$ by $\widetilde{Z}^{(m)} = (\widetilde{Z}^{(m),i}_\cdot)_{1 \le i \le d} =  (Z^{i}_{\cdot \wedge \tau_m})_{1 \le i \le d} = Z_{\cdot \wedge \tau_m}$. Let $M_t^{(m)} = M_{t\wedge \tau_m}$ and replace $Z_t$ and $M^t$ in (\ref{pre_bound}) by $\widetilde{Z}^{(m)}_t$ and $M_t^{(m)}$ respectively, then raise both sides  by a power of $2p$, by Jessen's inequality we get that, for all $T >0$ and $t \in [0,T]$, there is a $C$ depend only on $T$ and $p$ such that:
    \begin{align*}
        U^{2p}\left(\widetilde{Z}^{(m)}_t\right) \le C \left( U^{2p}(x) +\int_0^t \delta'^{2p}U^\frac{2p}{2}\left(\widetilde{Z}^{(m)}_s \right) + (2d\lambda')^{2p} ds +\left(M^{(m)}_t\right)^{2p} \right).
    \end{align*}
    Now take the sup over time and taking expectation  to get $\E_x\left[\sup_{s \in [0,t]} U^{2p}(\widetilde{Z}_s^{(m)})\right] $ is bounded by
    \begin{align}\label{pre_bound_m}
         C\left( U^{2p}(x) + \int_0^t \delta'^{2p} \E_x\left[\sup_{\tau \in [0,s]}\left(U^\frac{2p}{2}(\widetilde{Z}_\tau^{(m)})\right)\right] + (2d\lambda')^{2p}  \,ds + \E_x\left[\sup_{s \in [0,t]} (M_s^{(m)})^{2p}\right]\right).
    \end{align}
    We wish to obtain an inequality of Gronwall type, note that the square root function is  concave on $\R_+$, so by Jensen's inequality we get
    \begin{align}\label{grownwall_1}
        \E_x\left[\left(\sup_{\tau \in [0,s]}U\left(\widetilde{Z}_\tau^{(m)}\right)\right)^\frac{2p}{2}\right]\le \sqrt{\E_x\left[\sup_{\tau \in [0,s]}U^{2p}\left(\widetilde{Z}_\tau^{(m)}\right)\right]}.
    \end{align}
    
    Denote $\langle M\rangle_\cdot$ as the quadratic variation of a process $M_\cdot$. By BDG  inequality, there is an absolute constant $C > 0$ that depends only on $p$ such that
    \begin{equation}\label{BDG}
         \E_x\left[\sup_{s \in [0,t]} (M_s^{(m)})^{2p}\right] \le C \,\E_x\left[\langle M^{(m)}\rangle_t^p\right], \quad t\in\R_+.
    \end{equation}
Furthermore, for all $T>0$, there exists constants $C_1,\,C_2$ (depending on $T$ and $p$) such that for $t\in[0,T]$, 
    \begin{align*}
       \E_x\left[\langle M^{(m)}\rangle_t^p\right]&= \frac{1}{V}\E_x \left(\int_0^t U\left(\widetilde{Z}^{(m)}_s\right) \sum_{i,j = 1}^d \Gamma_{i,j}\left(\widetilde{Z}^{(m)}_s\right) ds\right)^p\\ 
        &\le \frac{C_1}{V}\int_0^t \E_x\left[ \left(U\left(\widetilde{Z}_s^{(m)}\right) \sum_{1 \le i,j \le d}\Gamma_{i,j}(\widetilde{Z}_s)\right)^p \right]ds\\ 
        &= \frac{C_1}{V}\int_0^t \E_x\left[ \left(U(\widetilde{Z}_s^{(m)}) \left(d \lambda' + \delta' |\widetilde{Z}_s^{(m)}|_1\right)\right)^p\right] ds\\
        &= \frac{C_1}{V}\int_0^t \E_x\left[ \left(U(\widetilde{Z}^{(m)}_s)\left(\delta'\sqrt{U(\widetilde{Z}^{(m)}_s)}+2d\lambda'\right)\right)^p\right]ds\\
        &\le \frac{C_2}{V}\int_0^t \delta'^p\E_x\left[\sup_{\tau \in [0,s]} U^{\frac{3}{2}p}\left(\widetilde{Z}^{(m)}_\tau\right)\right] + (2d\lambda')^p\E_x\left[ \sup_{\tau \in [0,s]} U^p\left(\widetilde{Z}^{(m)}_\tau\right)\right]ds
    \end{align*}
    where the above positive constants $C_2$ depends only on $p,T$ and is again independent of $m$. Finally, by Jensen's inequality again, we see there is $C_0 >0$ depends on $T,p,V$ such that the following inequality holds for all $t \in [0,T]$ 
    \begin{align}\label{grownwall_2}
        \E_x\left[\sup_{s \in [0,t]} (M_s^{(m)})^{2p}\right] \le C_0 \int_0^t \delta'^p \left(\E_x\left[ \sup_{\tau \in [0,s]} U^{2p}(\widetilde{Z}_\tau^{(m)})\right]\right)^\frac{3}{4}+(2d\lambda')^p \left(\E_x\left[\sup_{\tau \in [0,s]} U^{2p}(\widetilde{Z}^{(m)})\right]\right)^\frac{1}{2}\,ds
    \end{align}
    We let $y(t):=\E_x\left[\sup_{s \in [0,t]} U^{2p}(Z_s^{(m)})\right]$ and combine the inequalities (\ref{pre_bound_m}), (\ref{grownwall_1}) and (\ref{grownwall_2}) to get the following: 
    \begin{align}\label{grown_pre}
        y(t) \le C_0\left( U^{2p}(x) + \int_0^t (\delta'^{2p} +(2d\lambda')^p)\sqrt{y(s)}+(2d\lambda')^{2p} +  \delta'^p \left(y(t)\right)^\frac{3}{4}ds \right).
    \end{align}
    We let $\omega(s) = (\delta'^{2p} +(2d\lambda')^p)\sqrt{s} + (2d\lambda')^{2p} + \delta'^p s^\frac{3}{4}$, which is positive and strictly increasing on $[0,\infty)$. Therefore, $\Phi:[0,\infty)\to (0,\infty)$  defined by
 $\Phi(t) = \int_0^t \frac{1}{\omega(s)}ds$  is strictly increasing and continuous. Hence $\Phi^{-1}$ exists and is continuous. Also, note that $\E_x[\sup_{s \in [0,t]}U^{2p}(\widetilde{Z}^{(m)}_s)]$ is continuous in $t$ , so by the Gronwall-type inequality \cite[Theorem 4, p3]{dragomir2002some} with $C_0U^{2p}(x)$ and $ C_0$ in the place of $M$ and $\Psi$, we have 
    \begin{align}\label{grown_result}
         \E_x\left[\sup_{s\in [0,t]} U^{2p}\left(\widetilde{Z}^{(m)}_s\right)\right] \le \Phi^{-1}\left(\Phi\left(C_0 U^{2p}(x)\right) + C_0 \,t\right) \quad\text{for }t\in[0,T] \text{ and }x\in\R_+^d.
    \end{align}
    Note that the right hand side does not depend on $m$, so by taking $m\to\infty$ on the left hand side and invoke Fatou's lemma, we see $\E_x[\sup_{s \in [0,t]}U^{2p}(Z_s)]$ is finite for all $t \ge 0$ which implies (\ref{sup_bound}).
\end{proof}

\begin{prop}
\label{prop:feller}
The solutions to the (\ref{Reflected_General}) starting from different $x\in \R_+^d$ form a Feller process.
\end{prop}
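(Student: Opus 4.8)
The plan is to establish the Feller property by showing that the transition semigroup $P_t$ maps $C_0(\R_+^d)$ (continuous functions vanishing at infinity) into itself, which by a standard argument reduces to two ingredients: (i) continuity of $x \mapsto Z_t^{x}$ in a suitable sense so that $x \mapsto \E_x[g(Z_t)]$ is continuous for bounded continuous $g$, and (ii) a decay estimate ensuring $\E_x[g(Z_t)] \to 0$ as $|x| \to \infty$ when $g \in C_0$. The strong Markov property is already part of the conclusion; since the SDER \eqref{Reflected_General} has a path-wise unique solution (Theorem \ref{T:Wellposed}'s existence/uniqueness half, established via \cite{leite2019constrained} or \cite{dupuis1993sdes}), the solution is automatically a strong Markov process, so the real content is Feller \emph{continuity}.

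First I would prove continuous dependence on the initial condition on a fixed time interval $[0,T]$. Fix $x, y \in \R_+^d$ and let $Z^x, Z^y$ be the two solutions driven by the same Brownian motion $W$. Using the localizing stopping times $\tau_m = \tau_m' \wedge m$ from Lemma \ref{lem:sup_bound} (now applied to $\max(|Z^x|,|Z^y|)$), one estimates $\E[\sup_{s\le t\wedge\tau_m}|Z^x_s - Z^y_s|^2]$. The drift $b$ and dispersion $\sigma$ are locally Lipschitz with at-most-quadratic and at-most-linear growth respectively (subsection \ref{SS:coe}), so on the event $\{\,\cdot \le \tau_m\}$ they are genuinely Lipschitz with constant depending on $m$. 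For the reflection term, the oblique reflection map for the orthant with the nice state-dependent directions $\gamma$ is Lipschitz on bounded sets by the Skorokhod-map estimates of \cite{dupuis1993sdes} (their Theorem~4.x / \cite{leite2019constrained}); this gives a bound of the form $\E[\sup_{s\le t\wedge\tau_m}|Z^x_s - Z^y_s|^2] \le C_m\bigl(|x-y|^2 + \int_0^t \E[\sup_{u\le s\wedge\tau_m}|Z^x_u-Z^y_u|^2]\,ds\bigr)$, and Gr\"onwall yields $\E[\sup_{s\le t\wedge\tau_m}|Z^x_s-Z^y_s|^2] \le C_m' |x-y|^2$. Combining with the uniform moment bound \eqref{sup_bound} of Lemma \ref{lem:sup_bound}, which controls $\P(\tau_m < T)$ uniformly for $x$ in a bounded set, one removes the localization and concludes $Z^{x_n}_\cdot \to Z^x_\cdot$ in probability uniformly on $[0,T]$ whenever $x_n \to x$. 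Hence $x \mapsto \E_x[g(Z_t)]$ is continuous for every bounded continuous $g$, and in particular the semigroup preserves $C_b(\R_+^d)$.

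Next I would prove the vanishing-at-infinity property: for $g \in C_0(\R_+^d)$ and $\varepsilon > 0$, choose $R$ with $|g| \le \varepsilon$ outside the ball of radius $R$; then $|\E_x[g(Z_t)]| \le \varepsilon + \|g\|_\infty\,\P_x(|Z_t| \le R)$. It therefore suffices to show $\P_x(|Z_t| \le R) \to 0$ as $|x| \to \infty$. This follows from a lower bound on $|Z_t|$ in probability, which I would get from the Lyapunov function $U(x) = (|x|_1 - d\lambda'/\delta')^2$: since the total-mass-type drift pushes $|Z|_1$ toward $d\lambda'/\delta'$ only at a bounded rate while the diffusive and reflection perturbations are controlled, starting from large $|x|$ the process cannot reach a bounded set by time $t$ with non-vanishing probability. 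Concretely, applying It\^o's formula to $U$ as in \eqref{Ito}, using $\nabla U \cdot \gamma \le 0$ (Lemma \ref{lem:local_time_out}) to drop the local-time term, and using the Lyapunov bound \eqref{lyapunov} gives $\E_x[U(Z_t)] \ge$ something that stays large when $U(x)$ is large (a sharper route: the process $|Z_t|_1$ is dominated above, in a stochastic-comparison sense, by a one-dimensional diffusion with linear mean-reverting drift whose value at time $t$ concentrates away from $[0,R]$ when started from a large value), whence a Markov/Chebyshev inequality gives $\P_x(|Z_t|_1 \le R) \to 0$. Thus $P_t g \in C_0(\R_+^d)$, and the process is Feller.

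The main obstacle I expect is handling the reflection term in the continuous-dependence estimate at the non-smooth part of the boundary (edges and corners of the orthant), since there the oblique Skorokhod map is only known to be Lipschitz under the structural conditions on the reflection directions verified in \cite{leite2019constrained,dupuis1993sdes}; one must cite their Lipschitz estimate for the reflection map rather than reprove it, and make sure the constant's dependence on the bounded region is compatible with the localization argument. A secondary technical point is that $b$ grows quadratically (for $d \ge 3$), so Gr\"onwall cannot be applied globally — this is exactly why the localization by $\tau_m$ together with the $m$-uniform moment bound \eqref{sup_bound} is essential, and the argument must be organized so that $C_m'$ is allowed to blow up in $m$ while $\P(\tau_m < T)$ decays fast enough (which \eqref{sup_bound} guarantees by Chebyshev) to still yield the limit.
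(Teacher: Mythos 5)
Your first step is exactly the paper's proof: continuous dependence on the initial condition obtained by localizing at the exit times of large balls, using the moment bound of Lemma \ref{lem:sup_bound} together with Markov's inequality to make $\P(\tau_M^y\le t)$ small uniformly for $y$ near $x$, invoking the Lipschitz-type estimate for the obliquely reflected SDE from \cite{dupuis1993sdes,leite2019constrained} on the stopped processes, and closing with Gr\"onwall. This already proves the statement in the sense the paper intends and uses: ``Feller'' here means Feller continuity, i.e. $x\mapsto\E_x[f(Z_t)]$ is bounded and continuous for every $f\in C_b(\R_+^d)$, which is all that the Meyn--Tweedie results applied afterwards require.

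Your second step (showing $P_t$ maps $C_0(\R_+^d)$ into $C_0(\R_+^d)$) is therefore not needed, and as sketched it does not go through. Dropping the non-positive local-time term $\int_0^t\nabla U(Z_s)\cdot\gamma(Z_s)\,dL_s$ after It\^o's formula (Lemma \ref{lem:local_time_out}) and using \eqref{lyapunov} yields an \emph{upper} bound $\E_x[U(Z_t)]\le U(x)+\cdots$, whereas the vanishing-at-infinity argument needs a \emph{lower} bound on $|Z_t|_1$ in probability; your comparison claim is likewise stated as domination of $|Z_t|_1$ from above, which is the wrong direction for concluding $\P_x(|Z_t|\le R)\to 0$. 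A genuine lower bound would in addition have to control the downward push of the reflection on the total mass, since $\langle\mathbf{1},\gamma(x)\rangle=(d\lambda'-\delta'|x|_1)/|b(x)|<0$ when $|x|_1>d\lambda'/\delta'$ and the accumulated local time is not a priori bounded, so this step would require a new argument rather than a direct consequence of the Lyapunov inequalities. If you drop that extra step (or restrict the claim to the weak Feller property actually asserted), your proof coincides with the paper's.
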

\begin{proof}
    Let $\tau^x_M = \inf\{t \ge 0:|Z^x_t|_1 \ge M\}$ for $M > M^* =\frac{d \lambda'}{\delta'}+1$ as in \cite[(4.25)]{leite2019constrained}, then $\tau^x_M$ is a stopping time, where we note that $u$ in \cite[(4.25)]{leite2019constrained} is equal to $(1,1,\cdots,1)$ in our case. We will show that for all $f \in C_b(\R^d_+)$ and $t \ge 0$ with $x \in \R_+^d$, 
    \begin{align}\label{to_feller}
        \E\left[\left|f\left(Z^x\right)-f\left(Z^y\right)\right|\right] \to 0\quad \text{ as } y \to x,
    \end{align}
    where $Z^x,Z^y$ is the strong solution to the CLA (\ref{Reflected_General}) that start at $x,y \in \R_+^d$ respectively. This would imply Feller property since the function $x \mapsto \E[f(X_t^x)]$ is bounded.
    
    Fixing $x \in \R^d_+$, we first note that for any $\epsilon > 0, \exists M_\epsilon> M^*>0$ such that for all $M \ge M_\epsilon$ we have 
    \begin{align}
        \sup_{y \in B(x,1)} \P(\tau_M^y \le t) < \epsilon.
    \end{align}
    Indeed, by Markov inequality, we have 
    \begin{align*}
        \sup_{y \in B(x,1)} \P(\tau_M^y \le t)= \sup_{y \in B(x,1)} \P\left[\sup_{s \in [0,t]} |Z^y_s| \ge M\right] & \le \frac{\sup_{y \in B(x,1)}\E\left[\sup_{s\in [0,t]}|Z^y_t|\right]}{M}
    \end{align*}
    and by (\ref{grown_result}) and continuity of $\Phi$ in the proof of Lemma \ref{lem:sup_bound}, the map $\R_+^d\ni y \mapsto\E\left[\sup_{s\in [0,t]}|Z^y_t|^4\right]$ is uniformly bounded on compact set, hence the numerator on the right hand side is bounded by some $C_x >0$ depending only on $x$. So the right hand side goes to zero as $M \to \infty$.
    
    Assume $y \in B(x,1)$ and $M > M_\epsilon$, then we have the following decomposition of (\ref{to_feller}):
    \begin{align}\label{last_feller}
        \E\left[\left|f\left(Z^x_t\right)-f\left(Z^y_t\right)\right|\right]  &\le  \E\left[\left|f\left(Z^x_t\right)-f\left(Z^y_t\right)\right|1_{\tau^x_M\wedge \tau^y_M \ge t}\right] + 2\|f\|_\infty \left(\P(\tau_M^x \le t) + \P(\tau^y_M \le t)\right)\nonumber \\
        &\le \E\left[\left|f\left(Z^{x,(M)}_t\right)-f\left(Z^{y,(M)}_t\right)\right|\right] +4 \|f\|_\infty \epsilon,
    \end{align}
    where $Z^{x,(M)}_\cdot$ denote the stopped process $Z^x_{t \wedge\tau_M^x}.$ Then following the proof of Theorem 6.1 in \cite{leite2019constrained}, with its modification to the proof of Theorem 5.1 of \cite{dupuis1993sdes} to the stopped process and obtain the following inequality similar to \cite[equation (6.4)]{leite2019constrained}: for each $T>0$, there is a constant $C>0$ such that for $t\in[0,T]$,
    \begin{align}\label{ruth_unique}
        \E\left[\sup_{s \in [0,t]}\left|Z^{x,(M)}_s - Z^{y,(M)}_s\right|^2 \right] \le C \left(|x - y|^2 +\int_0^t \E\left[\sup_{\tau \in [0,s]}\left|Z^{x,(M)}_\tau - Z^{y,(M)}_\tau\right|^2 \right]ds \right).
    \end{align}
   By  Gronwall's inequality,  that as $B(x,1) \ni y \to x$, the left hand side of (\ref{ruth_unique}) goes to zero. Therefore, from (\ref{last_feller}) we see that 
   \begin{align*}
       \lim_{B(x,1)\ni y \to x}\E\left[\left|f\left(Z_t^x\right) - f\left(Z_t^y\right)\right|\right] \le 4\|f\|_\infty \epsilon,
   \end{align*}
   and since $\epsilon > 0$ is arbitrary, we see that (\ref{to_feller}) holds.
\end{proof}


\subsection{Proofs for the results in Section \ref{S:CLA}}\label{SS:Proofs}

\begin{proof}[Proof of Theorem \ref{T:Wellposed}]
The reaction network of the TK model contains inflows and outflows of all species \eqref{inout}. Furthermore, the autocatalytic reactions \eqref{auto} satisfies the mass-conserving/mass-dissipating assumption in  \cite[Definition 3.1(a)]{leite2019constrained} with $u=(1,1,\cdots,1)\in \R^d$. Hence \cite[Assumption 3.1]{leite2019constrained} is satisfied.
Strong uniqueness of the CLA follows from \cite[Theorem 6.1]{leite2019constrained}, and weak existence follows from \cite[Section 7]{leite2019constrained}. Now strong existence and weak uniqueness follow from the Yamada-Watanabe-Engelbert theorem.
 \cite{dynkin1956strong}. 
 
The Feller property is given by Proposition \ref{prop:feller}, hence it also has strong Markov property by \cite{dynkin1956strong}.
\end{proof}

\smallskip

The following result, which is a combination of \cite[Theorem 4.2 \& Theorem 4.5]{meyn1993stability} (see also \cite[Theorem 2.2.12]{berglund2021long}), provides a condition on Lyapunov functions that guarantees existence of a unique invariant distribution for Feller diffusions.     Note that a skeleton chain of a Feller diffusion also possesses the Feller property. By \cite[Proposition 2.2]{sarantsev2017reflected},  every compact subset is petite for the skeleton chain. 

\begin{thm}{\rm (\cite[Theorem 4.2 \& Theorem 4.5]{meyn1993stability})}\label{T:Feller_existInv}
Let $X$ be  a 
Feller diffusion. If $U$ is a positive function such that
for some positive constants $c_1,c_2>0$, a function $f:\R^d\to[1,\infty)$,  a compact petite set $K\subset \R^d_+$ such that $U$ is bounded on $K$ and the following inequality holds for $X$,
\begin{equation}\label{E:Feller_existInv}
    \mathcal{L}U(x) \leq -c_1f(x) + c_2\,{\bf 1}_{K}(x),\quad \forall x \in \R^d_+,
\end{equation}
then the diffusion is positive Harris recurrent and there is an invariant probability measure $\pi$ for $X$, also any invariant probability $\pi$ satisfies $\int f(x)\pi(dx)\leq c_2/c_1$.
\end{thm}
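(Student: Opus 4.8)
\medskip
\noindent\textbf{Proof plan.}
This is a continuous-time Foster--Lyapunov statement, so the plan is to deduce it from the stability theory of \cite{meyn1993stability} by checking that the drift inequality \eqref{E:Feller_existInv} supplies precisely the ingredients those theorems require. First I would observe that \eqref{E:Feller_existInv} forces non-explosion: since $U\ge 0$ and $\mathcal L U\le c_2$, Dynkin's formula (for the extended generator) along a localizing sequence of stopping times $\tau_n\uparrow\infty$ gives $\E_x[U(X_{t\wedge\tau_n})]\le U(x)+c_2 t$, so $X$ cannot exit every compact set in finite time and is thus a conservative Markov process for all $t\ge 0$.

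The core of the argument is a hitting-time estimate for $K$. For $x\notin K$, running the process until $\tau_K$ and using that $1_K\equiv 0$ on $[0,\tau_K)$, Dynkin's formula combined with \eqref{E:Feller_existInv} and $f\ge 1$ yields, after the same localization,
\begin{align*}
0\;\le\;\E_x\!\left[U\!\left(X_{t\wedge\tau_K\wedge\tau_n}\right)\right]\;\le\; U(x)-c_1\,\E_x\!\left[\int_0^{\,t\wedge\tau_K\wedge\tau_n}\! f(X_s)\,ds\right]\;\le\; U(x)-c_1\,\E_x\!\left[t\wedge\tau_K\wedge\tau_n\right].
\end{align*}
Letting $n\to\infty$ and then $t\to\infty$ by monotone convergence gives $\E_x[\tau_K]\le U(x)/c_1<\infty$ for every $x\notin K$ (and trivially for $x\in K$). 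Since $X$ is Feller and, by the remark preceding the theorem, every compact set (in particular $K$) is petite for a skeleton chain of $X$, this is exactly the hypothesis of \cite[Theorem 4.2]{meyn1993stability}: $X$ is positive Harris recurrent and has a unique invariant probability measure $\pi$, and \cite[Theorem 4.5]{meyn1993stability} delivers the associated ergodic statement.

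For the moment bound I would integrate \eqref{E:Feller_existInv} against $\pi$: from $P^tU-U=\int_0^t P^s(\mathcal L U)\,ds$ and invariance of $\pi$, Fubini kills the left-hand side, so $\int \mathcal L U\,d\pi=0$ and hence $c_1\int f\,d\pi\le c_2$, i.e. $\int f\,d\pi\le c_2/c_1$. The hard part will be making this last step rigorous: $U$ is unbounded, so one must first control $\int|\mathcal L U|\,d\pi$ before interchanging limits and integrals. The standard remedy is to run the computation with the truncations $U\wedge N$ and pass to the limit with Fatou's lemma, which proves finiteness of $\int f\,d\pi$ and the quantitative bound simultaneously. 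The remaining input --- petiteness/irreducibility of compact sets for the skeleton chain --- is already provided by the Feller property together with \cite[Proposition 2.2]{sarantsev2017reflected}, so the genuinely new work is confined to the localization bookkeeping in the displays above.
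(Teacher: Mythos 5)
This statement is not proved in the paper at all: it is quoted verbatim (up to notation) from \cite[Theorems 4.2 and 4.5]{meyn1993stability}, and the only paper-specific input is the remark preceding it that the Feller property passes to a skeleton chain and that every compact set is petite for that chain by \cite[Proposition 2.2]{sarantsev2017reflected}. Your proposal therefore cannot match the paper's (nonexistent) argument; what you have written is a sketch of the standard Foster--Lyapunov chain of reasoning that underlies the cited theorems, and in the end you still invoke \cite[Theorem 4.2]{meyn1993stability} for the passage from a finite mean hitting time of a petite set to positive Harris recurrence, so your proof is a reduction to the same reference rather than an independent derivation. That is legitimate, but two of the steps you do carry out yourself have genuine soft spots.

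First, the non-explosion step: from $U\ge 0$ and $\mathcal{L}U\le c_2$ alone, the bound $\E_x[U(X_{t\wedge\tau_n})]\le U(x)+c_2t$ does not rule out explosion unless $U$ is norm-like (coercive), i.e.\ $\inf_{|x|\ge n}U(x)\to\infty$; the hypotheses as stated only require $U$ positive and bounded on $K$, so this step fails in the stated generality. In the paper's application it is moot, since the Lyapunov function \eqref{lyapunov_function} is coercive and, more to the point, global existence of the CLA is already part of Theorem \ref{T:Wellposed}; the clean fix is to assume $U$ norm-like (as Meyn--Tweedie's conditions (CD0)/(CD2) effectively do) or to cite well-posedness. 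Second, the moment bound: the identity $\int \mathcal{L}U\,d\pi=0$ is exactly the delicate point, and the remedy you propose --- running the computation with $U\wedge N$ --- does not work as written, because $U\wedge N$ is not $C^2$ and is not in the domain of the extended generator in any way that preserves the drift inequality near the truncation level. The rigorous route is either to quote $\pi(f)\le c_2/c_1$ directly as part of \cite[Theorem 4.2]{meyn1993stability}, or to argue as the paper does later for Theorem \ref{thm:stationaryExist}: integrate the drift inequality along the path to get a Ces\`aro bound $\frac1t\int_0^t\E_x[U(Z_s)]\,ds\le U(x)/(tc_1)+c_2/c_1$, truncate the \emph{integrand} (a bounded continuous function, so convergence in law applies), and finish with monotone convergence. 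A minor citation point: the finite-mean-hitting-time estimate you derive is the hypothesis of the regularity characterization (Theorem 4.1 there), not literally of Theorem 4.2, whose hypothesis is the drift condition itself; this does not affect the substance but should be stated accurately.
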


The following result from \cite{meyn1993stability} (see also \cite[Theorem 2.2.15]{berglund2021long}) provides a condition on Lyapunov functions that guarantees exponential ergodicity of Feller diffusions.
\begin{thm}{\rm (\cite[Theorem 6.1]{meyn1993stability})} \label{T:Feller_convInv}
Let $X$ be  a 
Feller diffusion. 
Assume there exists a norm-like function $U$, and constants  $c_1>0$ and $c_2\in \R$ such that  $X$ satisfies
\begin{equation}\label{E:Feller_existInv2}
    \mathcal{L}U(x) \leq -c_1\,U(x) + c_2
\end{equation}
for all $x\in\R^d_+$. 
Then $X$ is $f$-exponentially ergodic with $f=U+1$.
\end{thm}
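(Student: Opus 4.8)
The plan is to obtain Theorem \ref{T:Feller_convInv} as the continuous-time Foster--Lyapunov geometric ergodicity theorem of \cite{meyn1993stability}, so that in the paper the ``proof'' will amount to checking that a Feller diffusion falls within the scope of \cite[Theorem 6.1]{meyn1993stability} and citing it; here I sketch the argument behind that citation. The first step is to upgrade the infinitesimal inequality \eqref{E:Feller_existInv2} to an exponential moment estimate. Applying It\^o's formula to $U(X_t)$ and stopping at $\tau_M=\inf\{t\ge0:|X_t|_1\ge M\}$, the boundary term $\int_0^{t\wedge\tau_M}\nabla U(X_s)\cdot\gamma(X_s)\,dL_s$ is nonpositive by Lemma \ref{lem:local_time_out}, so $U(X_{t\wedge\tau_M})-\int_0^{t\wedge\tau_M}\mathcal{L}U(X_s)\,ds$ is dominated by a martingale; taking expectations, letting $M\to\infty$ with the help of Lemma \ref{lem:sup_bound} (legitimate since $U$ has quadratic growth in $|x|_1$), and using \eqref{E:Feller_existInv2} gives $\tfrac{d}{dt}\mathbb{E}_x[U(X_t)]\le -c_1\,\mathbb{E}_x[U(X_t)]+c_2$, whence by Gr\"onwall
\begin{equation*}
\mathbb{E}_x[U(X_t)]\le e^{-c_1 t}U(x)+\frac{c_2}{c_1},\qquad t\ge0 .
\end{equation*}

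Next I would pass to a skeleton chain. Fix $h>0$ and set $\Phi=(X_{nh})_{n\ge0}$; the estimate above gives the one-step geometric drift $P_h U\le \lambda U+b$ with $\lambda:=e^{-c_1h}<1$ and $b:=c_2/c_1$. Because $U$ is norm-like (its sublevel set $\{U\le R\}$ is contained in the simplex $\{|x|_1\le d\lambda'/\delta'+\sqrt R\}\subset\R_+^d$, hence compact), the set $C:=\{U\le R\}$ is compact for any $R$, and by the remark preceding the statement it is petite for $\Phi$; choosing $R$ so that $\lambda R+b<R$ makes $U$ a Foster--Lyapunov function driving $\Phi$ toward $C$ at geometric rate. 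Combined with $\psi$-irreducibility and aperiodicity of $\Phi$ --- which follow from the Feller property together with the uniform ellipticity \eqref{EllipticGamma} in the interior (and can be carried to $\partial\R_+^d$ through the reflection), or directly from the cited petiteness of all compact sets --- the discrete-time geometric ergodicity theorem yields $\|P_h^n(x,\cdot)-\pi\|_{U+1}\le B(x)\rho^n$ for some $\rho<1$. Writing $t=nh+r$ with $r\in[0,h)$, the semigroup property and the continuous-time bound above absorb the extra step $P_r$ and control $B(x)$ in terms of $U(x)$, giving \eqref{f_expo} with $f=U+1$ and a rate $\beta<1$ uniform in $t$.

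The main obstacle is not a single estimate but two pieces of technical bookkeeping. First, the identity $\mathbb{E}_x\big[\int_0^t\mathcal{L}U(X_s)\,ds\big]$ is only formal and must be justified by the localization-plus-moments argument above; this is precisely where Lemmas \ref{lem:local_time_out} and \ref{lem:sup_bound} and the $C^2$, at-most-quadratic-growth structure of $U$ enter. Second, the irreducibility/petiteness input requires the diffusion to mix even across the reflecting faces of $\partial\R_+^d$, where the normal component of the motion is supplied only by the local-time term and the generator is degenerate; establishing this in full generality is exactly the issue that \cite{meyn1993stability} (for the abstract drift-implies-ergodicity step) and \cite[Proposition 2.2]{sarantsev2017reflected} (for petiteness of compact sets under reflection) are invoked to handle, which is why Theorem \ref{T:Feller_convInv} can be taken off the shelf rather than proved from scratch.
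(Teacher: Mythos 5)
Your proposal is correct and matches the paper's treatment: the paper states this result purely as a citation of \cite[Theorem 6.1]{meyn1993stability}, with the only added input being the remark that compact sets are petite for a skeleton chain of the Feller diffusion (via \cite[Proposition 2.2]{sarantsev2017reflected}), which is exactly the role your petiteness/irreducibility step plays. Your sketch of the argument behind the citation (drift inequality $\Rightarrow$ exponential moment bound via It\^o plus localization, geometric drift for the skeleton, then interpolation back to continuous time) is the standard Meyn--Tweedie route and raises no issues.
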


Conditions \eqref{E:Feller_existInv} and
\eqref{E:Feller_existInv2} are called (CD2) and (CD3) respectively in \cite{meyn1993stability}, and they are satisfied for our CLA (process $Z$) thanks to Lemma \ref{lm:lyapunov}.

\begin{proof}
[Proof of Theorem \ref{thm:stationaryExist} ]
    Note that Proposition \ref{prop:feller} implies that $Z$ is a Feller process.  
    Let $U$ be the Lyapunov function defined as (\ref{lyapunov_function}), inequality (\ref{lyapunov_2}) implies that $U$ satisfies the inequality (\ref{E:Feller_existInv}). Therefore, by Theorem \ref{T:Feller_existInv}, $Z$ is positive Harris recurrent has a unique invariant probability measure \cite[Section 4.1]{meyn1993stability}.
    
 It remains to  show that all the moments of  the stationary distribution $\pi$ are finite. By Ito's formula, Lemma \ref{lem:local_time_out} and Lemma \ref{lm:lyapunov}, we have that for each $p \in\mathbb{N}$, there exist some positive constants $c_p$ and $c_p'$, the following inequality holds for $t\in\R_+$:
    \begin{align*}
        \E_x[U^p(Z_t)] &\le U^p(x) + \E_x\left[\int_0^t {\cal L} U^p(Z_s) ds\right]\\
        &\le U^p(x) - \int_0^t c_p \E_x\left[U^p(Z_s)\right]ds + c_p't.
    \end{align*}
By rearranging terms and dividing by $t,c_p$, it follows that 
    \begin{align}\label{pre_ergodic}
        \E_x\left( \frac{1}{t} \int_0^t U^p(Z_s)ds\right) =  \frac{1}{t} \int_0^t \E_x\left[U^p(Z_s)\right]ds \le \frac{U^p(x)}{t\, c_p} + \frac{c_p'}{c_p}.
    \end{align}
    Now, let us define $U^p_M$ as the truncated function of $U^p$ at $M \ge 0$, that is,
    \begin{align*}
        U^p_M(x) = \begin{cases}
        U^p(x) & U^p(x) <M\\
        M& U^p(x) \ge M
        \end{cases}.
    \end{align*}
    Then $U^p_M$ is a bounded continuous function. Since $Z_t$ converges to $\pi$ in law, we have that 
    \begin{align}\label{conv_in_law}
        \lim_{t \to \infty} \E_x[U^p_M(Z_t)] = \int_{\R_+^d} U^p_M(x) \pi(dx).
    \end{align}
    Therefore, by (\ref{conv_in_law}) and (\ref{pre_ergodic})
    \begin{align*}
       \int_{\R_+^d} U^p_M(x) \pi(dx)=\lim_{t\to\infty} \frac{1}{t} \int_0^t \E_x\left[U^p_M(Z_s)\right]ds \le \frac{c_p'}{c_p}.
    \end{align*}
    Now, take $M\to \infty$ on the left hand side and by monotone convergence theorem, we have 
    \begin{align*}
        \int_{\R_+^d} U^p(x) \pi(dx) <\infty.
    \end{align*}
    Since the inequality holds for all $p \in\mathbb{N}$, we may conclude that all moments of $\pi$ are finite.
\end{proof}

\begin{proof}[Proof of Theorem \ref{thm:stationaryconv}]
 Let $U$ be the Lyapunov function defined in (\ref{lyapunov_function}). Inequality (\ref{lyapunov}) says that $U$ satisfies the inequality (\ref{E:Feller_existInv2}), hence we can apply Theorem \ref{T:Feller_convInv} to conclude that $Z$ is $f$-exponentially ergodic with $f=U+1$.

\end{proof}

\begin{proof}[Proof of Proposition \ref{L:stationary1} ]
Following \cite{kang2014characterization}, we let $n(x) = \{\sum_{i \in {\cal I}(x)} \alpha_i e_i, \alpha_i > 0\}$ be the set of interior normal vectors to the domain $\R_+^d$ at  $x \in \partial\R_+^d$, where ${\cal I}(x) = \{1 \le i \le d: x_i = 0\}$. Let 
\begin{align*}
    {\cal U} :=\left\{x \in \partial \R_d^+: \exists n \in n(x) \text{ such that }n\cdot \gamma(x) > 0 \right\}.
\end{align*}
This definition is a bit different from that of \cite[equation (6)]{kang2014characterization} where they define $d(x)$ as a set valued function, but since our reflection $\gamma(x)$ is well defined for all $x \in \partial \R_+^d$ including the non-smooth part, we can set it to be single valued. If $x = (x_1,\cdots,x_d) \in \partial \R_+^d$, then there is some $i$ between $1$ and $d$ such that $x_i = 0$, hence $e_i \in n(x)$, so $b_i(x) = \lambda'$ and  
\begin{align*}
    \langle e_i, \gamma(x)\rangle =\frac{1}{\|b(x)\|} \langle e_i, b(x)\rangle = \frac{1}{\|b(x)\|} \lambda' > 0.
\end{align*}
Hence ${\cal V} := \partial \R_+^d\backslash {\cal U} = \emptyset$.

We prove Proposition (\ref{L:stationary1}) by checking all conditions in \cite[Theorem 2]{kang2014characterization}: note that $\Gamma(x)$ is uniformly elliptic for all $x \in \R_+^d$ by (\ref{EllipticGamma}) and the reflection $\gamma$ is piece-wise $C^2(\partial \R_+^d)$.
\cite[Assumption 2]{kang2014characterization} which is satisfied since ${\cal V} = \emptyset$ by \cite[Remark 3.4]{kang2014characterization}. The wellposedness of the submartingale problem in the statement of \cite[Theorem 2]{kang2014characterization} is given by \cite[Theorem 1]{kang2017submartingale} and Theorem \ref{T:Wellposed}. Now, all assumptions of \cite[Theorem 2]{kang2014characterization} are satisfied, which proves our statement.
\end{proof}

\begin{proof}[Proof of Proposition \ref{L:stationary2} ]

 We prove the statement by checking all conditions in \cite[Theorem 3]{kang2014characterization}:  by the proof of Proposition \ref{L:stationary1} we see that \cite[Assumption 2]{kang2014characterization} is satisfied and the corresponding submartingale problem is well posed . Furthermore, all entries of $\Gamma(\cdot)$ and $b(\cdot)$ are smooth since they are polynomials, so \cite[Theorem 3]{kang2014characterization} holds with $\R_+^d,\gamma(x),b(x),\Gamma(x)$ in the place of $G,d(x),b(x),a(x)$ in equation (12)-(16) of \cite[p. 1341]{kang2014characterization}.
\end{proof}


\end{document}